\newtheorem{theorem}{Theorem}[section]
\newtheorem{thm}[theorem]{Theorem}
\newtheorem{fact}[theorem]{Fact}
\newtheorem{prop}[theorem]{Proposition}
\newtheorem{cor}[theorem]{Corollary}
\newtheorem{question}[theorem]{Question}
\theoremstyle{definition}
\newtheorem{definition}[theorem]{Definition}
\newtheorem{claim}[theorem]{Claim}
\newtheorem{lemma}[theorem]{Lemma}
\newtheorem{remark}[theorem]{Remark}
\crefname{claim}{claim}{claims}
\Crefname{claim}{Claim}{Claims}
\crefname{subsection}{subsection}{subsections}
\Crefname{subsection}{Subsection}{Subsections}
\crefname{question}{question}{questions}
\Crefname{question}{Question}{Questions}
\newcommand{\NN}{\omega}
\newcommand{\sub}{\subseteq}
\newcommand{\sN}[1]{_{#1\in \NN}}
\newcommand{\uhr}[1]{\! \upharpoonright_{#1}}
\renewcommand{\P}{\mathcal P}
\newcommand{\bi}{\begin{itemize}}
\newcommand{\ei}{\end{itemize}}
\newcommand{\bc}{\begin{center}}
\newcommand{\ec}{\end{center}}
\newcommand{\ES}{\emptyset}
\newcommand{\tp}[1]{2^{#1}}
\newcommand{\ex}{\exists}
\newcommand{\fa}{\forall}
\newcommand{\la}{\langle}
\newcommand{\ra}{\rangle}
\newcommand{\cantor}{{}^\omega 2}
\newcommand{\leT}{\le_{\mathrm{T}}}
\renewcommand{\P}{\mathcal P}
\newcommand{\n}{\noindent}
\newcommand{\vsps}{\vspace{3pt}}
\newcommand{\sss}{\sigma}
\newcommand{\aaa}{\alpha}
\renewcommand{\hat}{\widehat}
\newcommand{\cat}{\widehat{\phantom{\alpha}}}
\newcommand \seq[1]{{\left\langle{#1}\right\rangle}}
\newcommand\+[1]{\mathcal{#1}}
\newcommand{\wt}{\widetilde}
\newcommand{\ol}{\overline}
\newcommand{\uul}{\underline}
\newcommand{\lra}{\leftrightarrow}
\newcommand{\LR}{\Leftrightarrow}
\newcommand{\RA}{\Rightarrow}
\newcommand{\s}{\sigma}
\newcommand{\rest}[1]{\! \upharpoonright_{#1}}
\newcommand{\sssl}{\ensuremath{|\sigma|}}
\newcommand{\frb}{\mathfrak{b}}
\newcommand{\frd}{\mathfrak{d}}
\newif\ifbenoit
\definecolor{lightred}{rgb}{1,.60,.60}
\newcommand{\NumberQED}[1]{
\renewcommand\qedsymbol{\ensuremath{\openbox}$_{\ref{#1}}$}	
}
\title{Muchnik degrees   and cardinal characteristics}
\author{Benoit Monin}
\address{LACL, IUT S\'enart-Fontainebleau, Universit\'e Paris-Est Cr\'eteil, France}
\email{benoit.monin@computability.fr}
\author{Andr\'e Nies}
\address{University of Auckland, New Zealand}
\email{andre@cs.auckland.ac.nz}
\begin{document}
 

\begin{abstract} 
A mass problem is a set of functions   $\omega\to \omega$.   For mass problems $\+ C, \+ D$, one  says that $\+ C$ is Muchnik reducible to $\+ D$ if each function in $\+ C$ is computed by  a function in $\+ D$.  In this paper we study some highness properties of Turing oracles, which we view as mass problems. We  compare them  with respect to Muchnik reducibility and its uniform strengthening, Medvedev reducibility.

 For $p \in [0,1]$  let $\+ D(p)$ be the mass problem of infinite bit sequences~$y$ (i.e., $\{0,1\}$-valued functions)  such that for each computable bit sequence~$x$,   the   bit sequence $ x \lra y$  has  asymptotic  lower  density   at most~$p$ (where $x \lra y$  has a $1$ in position $n$ iff  $x(n) =  y(n)$).  
 We show that all members of  this  family of mass problems parameterized by a real $p$ with  $0 <  p<1/2 $ have the same complexity in the sense of Muchnik reducibility.     We prove this by showing    Muchnik equivalence of the problems  $\+ D(p)$ with  the mass problem $\mathrm{IOE}(\tp { \tp n})$; here for an order function $h$, the mass problem $\mathrm{IOE}(h)$ consists  of the functions $f$ that agree infinitely often with each computable function bounded by~$h$.    This result also yields a new version of the proof to  of the  affirmative answer to the 	``Gamma question'' due to the first author:    $\Gamma(A)< 1/2$ implies $\Gamma(A)=0$ for each Turing oracle~$A$.  
 
 As a dual of  the problem  $\+ D(p)$, define   $\+ B(p)$, for   $0 \le p < 1/2$,  to be  the set of bit sequences $y$  such that   $\uul \rho (x \lra y) > p$ for each computable  set~$x$. We prove  that the  Medvedev (and hence Muchnik) complexity  of the  mass problems $\+ B(p)$ is the same for all $p \in (0, 1/2)$, by showing that they are Medvedev equivalent to  the mass problem of functions bounded by $\tp{\tp n}$ that are almost everywhere different from each computable function. 
 
 Next,  together with Joseph Miller,   we obtain a proper hierarchy of the mass problems of type  $\mathrm{IOE}$: we show  that for any order function~$g$ there exists a faster growing  order  function $h $ such that $\mathrm{IOE}(h) $ is strictly  above  $\mathrm{IOE}(g)$ in the sense of Muchnik reducibility.

 We study cardinal characteristics in the sense of set theory that are analogous to the highness properties above. For instance, $\frd (p)$  is the least size of a set $G$ of bit sequences  such  that for each bit sequence $x$ there is a bit sequence  $y$ in  $G$   so that $\uul \rho (x \lra y) >p$. We prove within ZFC all the coincidences of cardinal characteristics that are the analogs of  the results above.
\end{abstract}

\maketitle 

\tableofcontents

%
%

\section{Introduction}
It is of  fundamental interest in computability theory to determine the inherent computational complexity of an  object, such as an infinite bit sequence, or more generally  a function~$f$ on the natural numbers. To determine  this  complexity,  one can place the object within  classes of objects  that  all have  a  similar complexity. Among such classes, we will focus on highness properties. They specify a sense in which  the object in question is computationally powerful.

The $\Gamma$-value of an infinite  bit sequence $A$,  introduced by  Andrews, Cai, Diamondstone, Jockusch and Lempp~\cite{andrews2013}, is a real in  between $0$ and $1$ that in a sense measures how well all oracle sets in its Turing degree can be approximated by computable sequences. For each $p\in (0,1]$, ``$\Gamma(A)<p$" is a highness property of $A$. The values $0, 1/2$ and $1$ occur \cite{hirschfeldt2016asymptotic,andrews2013}. Further, $\Gamma(A)> 1/2 \LR \Gamma(A) =1 \LR  A$ is computable \cite{hirschfeldt2016asymptotic}. Andrews et al.\  asked whether  the $\Gamma$-value can be strictly between $0$ and $1/2$. The precise definition of $\Gamma(A)$ will be given shortly  in Subsection~\ref{ss:Gamma}.

Monin   \cite{monin:asymptotic} answered their question in the negative, and also characterised the degrees with $\Gamma$-value $< 1/2$. He built  on some initial work of the present authors~\cite{Monin.Nies:15} involving    functions that agree with each computable function infinitely often.

Our goal is to  provide a systematic approach to the topic, relying on an analogy between highness properties of oracles   and    cardinal characteristics in set theory. In particular we   apply  methods analogous to the ones in \cite{monin:asymptotic} to cardinal characteristics.   

 Cardinal characteristics measure how far the set theoretic universe deviates from satisfying the continuum hypothesis. They are natural cardinals greater that $\aleph_0$ and at most $2^{\aleph_0}$.  We provide two examples.  For functions $f, g \colon \NN \to \NN$, we say that $g$ dominates $f$ if   $g(n) \ge f(n)$ for sufficiently large $n$.  The \emph{unbounding number} $\frb$ is the least size of a collection of functions $f$ such that no single function dominates   the entire collection. The \emph{domination  number} $\frd$ is the least size of a collection of functions   so that each function  is dominated by a function  in the collection. Clearly $\frb \le \frd$; in   appropriate models of set theory the inequality can be made strict, and   one can ensure that  $\frd< 2^{\aleph_0}$.  For an introduction to the topic see e.g.\ Blass~\cite{Blass:10}. A~general reference on cardinal characteristics  is the book~\cite{Bartoszynski.Judah:book}.

The analogy  between cardinal characteristics and highness properties of oracles in computability theory was first noticed and studied by Rupprecht~\cite{Rupprecht:thesis,Rupprecht:10}.   For instance, he observed  that   the analog of $\frb$  is the usual highness $A' \ge_T \ES''$ of an oracle $A$, and the analog of $\frd$  is being of hyperimmune degree.   Elaborating on Rupprecht's work,  Brooke-Taylor et al.~\cite{Brendle.Brooke.ea:14}  investigated  the analogy via  a    notation system that makes it possible to automatically  transfer many highness properties of oracles into cardinal characteristics, and vice versa.   

\smallskip

The rest of the introduction will provide more detail on the notions mentioned above, and describe the main results.

\subsection{Defining the $\Gamma$-value of a sequence}  \label[subsection]{ss:Gamma}
We recall how to define the $\Gamma$-value of  an infinite bit   sequence (often simply termed ``sequence");  this definition will only depend on its Turing degree.  For a  sequence  $Z$, also viewed as a subset of $\NN$,  the lower density is defined by
\bc $\underline \rho (Z)  = \liminf_n \frac{|Z \cap [0, n)|}{n}.$ \ec 
For    sequences $X,Y $ one  denotes  by $X\lra Y$ the sequence $Z$ such that $Z  (n) =1 $ iff   $ X(n) = Y(n)$.  
To measure how closely a   sequence $A$ can be approximated by a computable sequence $X$, Hirschfeldt et al.\  \cite{hirschfeldt2016asymptotic} defined \bc $\gamma(A)=\sup \{ \underline \rho(A\leftrightarrow X)\colon \, X \, \text{is computable}\}$.  \ec 
Clearly this depends on the particular sequence $A$, rather than its Turing complexity. Andrews et al.\ \cite{andrews2013}   were the first to study  the infimum of the $\gamma$-values over all $Y$ in the  Turing degree of $A$: 
$$\Gamma(A) = \inf \{ \gamma(Y) \colon  \, Y \equiv_T A \}.$$
Nies~\cite[Section 7]{Nies:17a} contains some   background on the $\Gamma$-value;  in particular, that  $1- \Gamma(A)$ can be seen as a Hausdorff pseudodistance between $\{Y \colon \, Y \leT A\}$ and the computable sets with respect to the Besicovitch distance $\overline \rho (U \triangle V)$ between bit sequences $U,V$ (where $\overline \rho$ is the upper density). Thus, a large value $\Gamma(A)$  literally means that $A$ is ``close to computable".

\smallskip

\subsection{Duality}
 
Cardinal characteristics often come in  pairs of dual cardinals.  
This duality stems from the way the characteristics are defined based on relations between suitable spaces. For instance, the unbounding number~$\frb$ is the dual of  the domination number~$\frd$.   The detail will be given in Definition~\ref{df: bd}. 

Brendle and Nies in~\cite[Section 7]{LogicBlog:15},   starting from  the  work in   \cite{hirschfeldt2016asymptotic,andrews2013}, considered  for $p \in [0,1/2]$ highness properties  $\+ D(p)$ such that   \begin{equation} \label{eqn:GammaD}  \Gamma(A)< p \RA A \in \+ D(p) \RA \Gamma(A)\le p.  \end{equation}  They defined   $\+ D(p)$   to be  the set of oracles $A$ that compute a bit sequence  $Y$ such that  
$\underline \rho( Y \lra X) \le p$ for each computable sequence $X$.  They then     applied    the framework  of Rupprecht~\cite{Rupprecht:thesis}, in the notation of Brooke-Taylor et al.\ \cite{Brendle.Brooke.ea:14}. This led to    cardinal characteristics $\frd(p)$,   the least size of a set $G$ of bit sequences  so that for each bit sequence $x$ there is a bit sequence  $y$ in  $G$   such  that $\uul \rho (x \lra y) >p$.  
  Dualising this both in computability and in set theory, they introduced  for each   $0\le p < 1/2$ the   highness property $\+ B(p)$, the class of oracles $A$ that compute a bit sequence $Y$  such that   for each computable  sequence $X$, we have $\underline \rho (X \lra Y) > p$, and the  analogous  cardinal characteristic  $\frb(p)$,  the least size of a set  $F$ of  bit sequences so that for each bit sequence $y$, there is a bit sequence $x$ in $F$  such that $\uul \rho (x \lra y) \le p$.   
  
 \subsection{Coincidences}  Extending Monin's methods \cite{Monin.Nies:15}, we will show that all the highness properties $\+ D(p)$ coincide for  $0< p < 1/2$, and similarly for the highness properties  $\+ B(p)$.   Since $\Gamma(A)< p \RA A \in \+ D(p)$, we re-obtain Monin's  result that $\Gamma(A) < 1/2$ implies $\Gamma(A) = 0$. Via analogous   methods within set theory, we show that $\mathrm{ZFC}$ proves  the coincidence of all the $\frd(p)$,  and of all the $\frb(p)$,  for  $0< p <  1/2$.  
  
  In Subsection~\ref{ss:density} we  will describe the coincidences in computability in  more detail. We first need to discuss some more concepts.

\subsection{Medvedev and  Muchnik reducibility} \label[subsection]{ss:MM} A  non-empty subset $\+ B$ of Baire space is  called a \emph{mass problem}.     A function $f \in \+ B$ is called   a \emph{solution} to the problem.  The easiest problem in this sense  is  the set of all functions. 

In this paper we will   phrase our highness properties in the language of  mass problems (rather than upward closed sets of Turing degrees as in \cite{Brendle.Brooke.ea:14}), and compare them via Medvedev and  Muchnik reducibility. The advantage of this approach is that we can keep track of potential uniformities when we give  reductions showing that one property is at least as computationally powerful as another.

 Let $\+ B$ and $  \+ C$ be mass problems. The reducibilities   provide two variants of saying  that any solution to $\+ C$ yields a solution to $\+ B$. The first, also called strong reducibility,  is the  uniform version:
 one writes   $\+ B \le_S \+ C$ (and says that $ \+ B$ is Medvedev reducible to $\+ C$) if there is a Turing functional $\Gamma$ with domain containing $\+ C$ such that $\fa g \in \+ C [  \Gamma^g \in \+ B]$.  Note that $\+ B \supseteq \+ C$ implies $\+ B \le_S \+ C$ via the identity functional.
 One writes    $\+ B \le_W \+ C$ (and says that $ \+ B$ is  weakly, or Muchnik reducible to $\+ C$) if $\fa g \in \+ C\ \ex f \in \+ B [ f \le_T g]$. Muchnik degrees correspond to end segments in the Turing degrees via sending $\+ C$ to the collection of oracles computing a member of $\+ C$.   In this way, viewing highness properties as mass problems and comparing them via   Muchnik reducibility $\le_W$,  is equivalent to viewing them as   end segments in the Turing degrees and comparing them via  reverse inclusion.

\smallskip

\subsection{A  pair of   dual mass problems for functions.}  \label[subsection]{ss:dualpair}
One can determine  the computational complexity of an object by comparing it to \emph{computable} objects of the same type.   
This  idea was used to introduce the density-related mass problems  $\+ D(p)$  and $\+ B(p)$. We will   apply  it    to introduce   two further    mass problems  of  importance in this paper.   We say that a function~$f$ is $\mathrm{IOE}$ if $\ex^\infty n \, [f(n) = r(n) ]$ for each computable function $r$. 
 We say that $f$ is $\mathrm{AED}$  if $\forall^\infty n \, [f(n) \neq r(n) ]$ for each computable function $r$. ($\mathrm{IOE}$ stands for ``infinitely often equal", while  $\mathrm{AED}$ stands for ``almost everywhere different".)

The study of the class  $\mathrm{AED}$ can be traced back to Jockusch~\cite[Thm.\ 7]{Jockusch:89}. He    actually  considered a stronger property of a function $f$ he denoted  by SDNR:   $\fa^\infty n \, [f(n) \neq r(n) ]$ for each \emph{partial}  computable function $r$.  Kjos-Hanssen, Merkle, and Stephan \cite[Thm.\ 5.1 $(1) \to (2)$]{Kjos.Merkle:11} showed that
 each non-high $\mathrm{AED}$ function  is SDNR.

The class  $\mathrm{IOE}$  was   introduced much later.   Kurtz \cite{Kurtz:81} showed that the mass problem of weakly  1-generic sets is Muchnik  equivalent to the functions not dominated by a computable function (the corresponding end segment consists of   the hyperimmune Turing degrees).  Using this fact,  it is not hard to show  that $\mathrm{IOE}$ is also Muchnik equivalent to  the class of  functions not dominated by a computable function.

An \emph{order function} $h$  is a non-decreasing, unbounded computable function. In computability theory, one often uses order functions as bounds to parameterise known classes of measuring computational  complexity. For instance, $\mathrm{DNC}(h)$ is the class of diagonally non-computable functions $f<h$. For another example, an oracle $A$ is $h$-traceable if each $A$-partial computable function has a c.e.\ trace of size bounded  by $h$ (see e.g.\ \cite[Ch.\ 8]{Nies:book}). 

We focus on  versions of the classes $\mathrm{IOE}$ and $\mathrm{AED}$ parameterised by an order function~$h$. By  $\mathrm{IOE}(h)$ we denote  the mass problem  of functions  $f$  such   $\ex^\infty n \, [f(n) = r(n) ]$ for each computable function $r< h$.   Dually,  $\mathrm{AED}(h)$  is the mass problem of functions   $f< h$ such that  $\forall^\infty n \, [f(n) \neq r(n) ]$ for each computable function $r$.

  Note that  $g\le h$ implies $\mathrm{IOE}(g) \supseteq \mathrm{IOE}(h)$ and $\mathrm{AED}(g) \subseteq \mathrm{AED}(h)$.  One can now ask the following: 
    \emph{For an  order function $h$ that grows sufficiently much  faster  than an order function $g$, do we obtain  $\mathrm{IOE}(g) <_W \mathrm{IOE}(h)$ and $\mathrm{AED}(g) >_W \mathrm{AED}(h)$?  }

    For the operator   $\mathrm{IOE}$,  separations for some  rather  special cases of functions $g,h$ were obtained in  \cite{Monin.Nies:15}. We  answer the full question for IOE  in the affirmative. Theorem~\ref{th-ioe}, which is  joint work with Joseph S.\ Miller that will be  included here, roughly speaking states that $h$ needs to be growing faster than $2^{g(2^{n\cdot n})}$ for a separation.

For the operator $\mathrm{AED}$,  the answer was  known already. Recent work of Khan and Miller \cite{Khan.Miller:17} provides  a hierarchy for  the mass problems of low $\mathrm{DNR}(h)$ functions.   Khan and Nies \cite[Section~2]{LogicBlog:17}   turned    these mass problems into  mass problems $\mathrm{AED}(\wt h)$ for $\wt h$ close to  $h$, preserving weak reducibility. 

The characteristics $\frb(\neq^*,  h)$    are  analogous to the mass problems $\mathrm{AED}(h)$;   detail will be given in  Section~\ref{s:CC}.  Kamo and Osuga~\cite{Kamo.Osuga:14} have proved  that it is  consistent with ZFC  to have distinct  cardinal characteristics  $\frb(\neq^*,  h)$ depending on the growth of the function $h$. A similar result is unknown at present for the dual characteristics   $\frd(\neq^*,  h)$.

\smallskip
\subsection{Density}  \label[subsection]{ss:density} With the reducibilities  discussed in Subsection~\ref{ss:MM} in mind, the above mentioned highness properties related to $\Gamma$, introduced by Brendle and Nies in \cite[Section 7]{LogicBlog:15},   will now be considered as  mass problems. They consist of  $\{0,1\}$-valued functions on $\NN$, i.e.,  infinite bit sequences.    Let $p$ be a real with $0 \le p< 1$.     
  $\+ D(p)$ is    the set of bit sequences $y$  such that  $\uul \rho (x \lra y) \le  p$  for each computable  set $x$.  Note that this   resembles the definition of $\mathrm{IOE}$. 
   $\+ B(p)$    is the set of bit sequences $y$  such that   $\uul \rho (x \lra y) > p$ for each computable  set $x$.  This   resembles the definition of $\mathrm{AED}$.

Clearly $0 \le p<q < 1$ implies $\+ D(p) \sub \+ D(q)$ and $\+ B(p) \supseteq  \+ B(q)$. Our first result,  \Cref{thm:mainDelta},  shows  that  there actually is no proper hierarchy in the Muchnik degrees when the parameter is positive. It also provides  a   characterisation by  a combinatorial class,   relying on  agreement of functions with computable functions,  rather than on density:   
 \bc       $\+ D(p)\equiv_W \mathrm{IOE}( \tp{(\tp n)})$ and $\+ B(p)\equiv_S \mathrm{AED}( \tp{(\tp n)})$ \ec 
 for arbitrary   $p\in (0,1/2)$.   The corresponding result for cardinal characteristics is \Cref{th:sht} below.  The outer  exponential function in the bound simply stems from the fact that we view function values as encoded by binary numbers, which correspond to blocks in the   bit sequences: if a bound~$h$ has the form $\tp{\hat h}$ for an order function $\hat h$, then a function $f < h$ naturally corresponds to a bit sequence which is the concatenation of blocks of length $\hat h(i)$ for $i \in \NN$.

 As part of the proof of  \Cref{thm:mainDelta}, we  show  in  a lemma that the parameterised classes $\mathrm{IOE}(h)$ and $\mathrm{AED}(h)$ don't depend too sensitively on the bound~$h$: if $g(n) = h(2n)$ then $\mathrm{IOE}(g) \equiv_W \mathrm{IOE}(h)$ and $\mathrm{AED}(g) \equiv_S \mathrm{AED}(h)$.   Since the first equivalence we obtain   is merely  Muchnik,  in \Cref{thm:mainDelta}  we also only have Muchnik in its first equivalence.  Note that by the lemma, in the above, we can replace $\mathrm{IOE}( \tp{(\tp n)})$ by $\mathrm{IOE}( \tp{(\tp {n\cdot r})})$ for any $r>0$.
 
We remark that  after the  first version of this paper was posted on arXiv in December 2017~ \cite{Monin.Nies:17}, as part of a large study, Greenberg, Kuyper and Turetsky~\cite{Greenberg.etal:18} sketched  their own version of our   coincidence  results for highness classes and for cardinal characteristics. Based on the work of Rupprecht, they introduced  a systematic machinery that also applies to reductions. It now  suffices to state a single abstract theorem \cite[Thm.\ 6.15]{Greenberg.etal:18}, which implies all four coincidences.    
 
 We think that each approach has its advantages.  Ours is more direct and requires the reader to assimilate less general theory before proceeding to the result; it also deals in a concrete way with the non-uniformities in the computability case, for instance in Lemma~\ref{lem:doubleR}. Their approach is general, and hence  superior to ours  towards understanding the reason for  the persistent analogies between set theory and computability, and the dualities within each area. Also, once the general machinery is available,  it saves work in proving particular coincidences.

 \subsection*{Acknowledgements} As mentioned, several  of the questions studied here arose  in  work of  J\"org  Brendle and the second author that has been  archived in~\cite[Section 7]{LogicBlog:15}, where the cardinal characteristics relating to density were introduced. We thank Brendle for these very helpful discussions. 
 We thank Joseph Miller for his  contribution   towards  Section 5 in this paper. 
 Nies was  supported
in part by the Marsden Fund of the Royal Society of New Zealand, UoA
13-184 and UoA 19-346. 
The work was completed while the authors visited the Institute for Mathematical Sciences at NUS during the  2017 programme ``Aspects of Computation".

\section{Defining mass problems based on relations}
Towards proving our   main theorems, we will need a general formalism to define mass problems based on relations, similar to~\cite{Brendle.Brooke.ea:14}.  We consider ``spaces" $X,Y$, which will be effectively closed subsets of  Baire space. Let the variable $x$ range over $X$, and let $y$ range over $Y$. Let  $R \subseteq  X \times Y$ be a relation, and  let $S = \{  \langle y,x \rangle \in Y \times X \colon \neg xR y\}$.

 \begin{definition} \label[definition]{df:DB} We define the  pair of dual mass problems 
\begin{eqnarray*} \+ B(R) &=&   \{ y\in Y \colon  \fa x \ \text{computable} \ [xRy]     \} \\
 \+ D(R) =  \+ B(S) &=&   \{ x \in X \colon  \fa y \ \text{computable} \ [\neg xRy]   \}.  \end{eqnarray*} \end{definition}
To re-obtain the mass problems discussed in the introduction, we consider  the following two types of relation.

\begin{definition} \label[definition]{df:relations}
\n {\it 1.}   Let $h \colon \omega \to \omega - \{0,1\} $. Define  for $  x \in {}^\omega\omega$ and 

\n $  y \in \prod_n \{0, \ldots, h(n)-1\} \sub  {}^\omega\omega$,   
\[ x \neq^*_h  y \LR \fa^\infty n \,  [ x(n) \neq y(n)]. \]

\n {\it 2.} Recall that  $\uul \rho (z) = \liminf_n |z \cap n|/n$ for a bit sequence $z$. 
 Let $ 0 \le p < 1$. Define,   for $x,y \in {}^\omega 2$
\[ x \bowtie_p y \LR   \underline \rho (x \lra y)  >p, \]
where $x \lra y$ is  the set of $n$ such that $x(n) = y(n)$. \end{definition}
For the convenience of the reader we summarise the specific  mass problems determined by these relations.  

\begin{remark}  Let $h$ be a computable function. Let $p$ be a real with ${0 \le p \le 1/2}$. 

\medskip

\n   \fbox{$\mathrm{IOE }(h)$}   is our     notation for $\+ D(\neq^*_h)$,   the set of functions $y$ such that for each computable function $x < h$, we have $\ex^\infty n \, x(n) = y(n)$.

\medskip

\n  \fbox{$\mathrm{AED }(h)$} is our notation for $\+ B(\neq^*_h)$, the set of functions $y <  h$  such that for each computable function $x$, we have     $\fa^\infty n \, x(n) \neq y(n)$.   

\medskip

\n \fbox{$\+ D(p)$} is our  short notation for   $\+ D(\bowtie_p)$,    the set of bit sequences $y$  such that for each computable set $x$, we have $\uul \rho (x \lra y) \leq p$.  

\medskip

\n     \fbox{$\+ B(p)$} is our  short notation for   $\+ B(\bowtie_p)$, the set of bit sequences $y$  such that   for each computable  set $x$, we have $\uul \rho (x \lra y) > p$.
\end{remark}

\section{Coincidences of Muchnik degrees} \label[section]{s:Nies_Delta}

As mentioned, our goal is to  show   \bc       $\+ D(p) \equiv_W \mathrm{IOE}(\tp{(\tp n)})$ and $\+ B(p) \equiv_S \mathrm{AED}( \tp{(\tp n)})$ \ec for arbitrary   $p\in (0,1/2)$.
 We begin with some preliminary facts of independent interest.
 On occasion we denote a function $\lambda n. f(n)$ simply by $f(n)$.

 \begin{lemma} \label[lemma]{lem:doubleR}  \bi \item[(i)] Let $h$ be nondecreasing and $g(n) = h(2n)$. We have  
 
 \n  $\mathrm{IOE}(h) \equiv_W\mathrm{IOE}(g)$  and $\mathrm{AED}(h) \equiv_S \mathrm{AED}(g)$.

\item [(ii)] For each $a,b >1$ we have   

\n $\mathrm{IOE}({\tp{(a^n)})}\equiv_W\mathrm{IOE}({\tp{(b^n)}})$ and   $\mathrm{AED}({\tp{(a^n)})}\equiv_S \mathrm{AED}({\tp{(b^n)}})$. \ei \end{lemma}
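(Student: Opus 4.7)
The plan is to establish (i) by four explicit reductions and then to deduce (ii) by iteration. I will use throughout the obvious fact that $g(n) = h(2n) \ge h(n)$ since $h$ is nondecreasing.

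The two easy halves of (i) are immediate: because every computable $r < h$ is also $< g$, we have $\mathrm{IOE}(g) \sub \mathrm{IOE}(h)$ and $\mathrm{AED}(h) \sub \mathrm{AED}(g)$, so the identity functional witnesses $\mathrm{IOE}(h) \le_W \mathrm{IOE}(g)$ and $\mathrm{AED}(g) \le_S \mathrm{AED}(h)$. For the non-trivial half of the AED equivalence I plan to duplicate values: given $f \in \mathrm{AED}(g)$, set $\tilde f(m) = f(\lfloor m/2 \rfloor)$. The bound $\tilde f(m) < g(\lfloor m/2 \rfloor) = h(2\lfloor m/2 \rfloor) \le h(m)$ is routine, and AED is preserved because an arbitrary computable $r$ decomposes into the two computable functions $n \mapsto r(2n)$ and $n \mapsto r(2n+1)$, each of which $f$ avoids almost everywhere. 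This reduction is uniform, which is why we can claim Medvedev reducibility in the AED case.

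The main obstacle is the non-trivial half of the IOE equivalence, where I expect only Muchnik reducibility. Given $y \in \mathrm{IOE}(h)$ the natural candidate solutions are the two $y$-computable sequences $y_e(n) = y(2n)$ and $y_o(n) = y(2n+1)$; the plan is to show that at least one of them lies in $\mathrm{IOE}(g)$, with the choice depending on $y$. Suppose $y_e \notin \mathrm{IOE}(g)$, witnessed by a computable $r_0 < g$ with $y(2n) \neq r_0(n)$ almost everywhere. For an arbitrary computable $r < g$, I lift to $\tilde r < h$ by setting $\tilde r(2n) = r_0(n)$ and $\tilde r(2n+1) = r(n)$; both values are admissible because $r_0(n), r(n) < g(n) = h(2n) \le h(2n+1)$. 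Since $y \in \mathrm{IOE}(h)$ forces $y = \tilde r$ infinitely often while the even-position agreements are finite by choice of $r_0$, the odd-position agreements $y(2n+1) = r(n)$ must be infinite, i.e.\ $y_o \in \mathrm{IOE}(g)$. The non-uniform choice ``output $y_e$ or $y_o$, whichever works'' is precisely what prevents a Medvedev reduction.

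Part (ii) will then come from iterating (i). Writing $h_c(n) = \tp{c^n}$, one has $h_c(2n) = \tp{c^{2n}} = h_{c^2}(n)$, so a $k$-fold application of (i) gives $\mathrm{IOE}(h_a) \equiv_W \mathrm{IOE}(h_{a^{2^k}})$ and $\mathrm{AED}(h_a) \equiv_S \mathrm{AED}(h_{a^{2^k}})$ for every $k$. For $a, b > 1$, I pick $k$ so large that $a^{2^k} \ge b$; then $h_{a^{2^k}} \ge h_b$ pointwise, and the easy inclusions applied to the pair $(h_b, h_{a^{2^k}})$ deliver $\mathrm{IOE}(h_b) \le_W \mathrm{IOE}(h_a)$ and $\mathrm{AED}(h_b) \le_S \mathrm{AED}(h_a)$. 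Swapping the roles of $a$ and $b$ produces the reverse reductions and completes the equivalences.
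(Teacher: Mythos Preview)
Your proof is correct and follows essentially the same route as the paper: the trivial inclusions from $h\le g$, the value-duplication map $m\mapsto f(\lfloor m/2\rfloor)$ for the uniform $\mathrm{AED}$ reduction, the even/odd split of $y$ for the non-uniform $\mathrm{IOE}$ reduction, and the iteration $h_c(2n)=h_{c^2}(n)$ for part~(ii). The only cosmetic difference is that the paper argues the $\mathrm{IOE}$ step by contradiction (assume both $y_e,y_o$ fail) whereas you phrase it as ``if $y_e$ fails then $y_o$ works''; these are logically equivalent. One tiny slip: in your part~(ii), the easy inclusion from $h_{a^{2^k}}\ge h_b$ actually yields $\mathrm{AED}(h_a)\le_S\mathrm{AED}(h_b)$, not the direction you stated, but since you immediately swap $a$ and $b$ this is harmless.
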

Note that the duality appears to be incomplete: for the statement involving the $\mathrm{IOE}$-type problems, we   only obtain weak equivalence. We ignore at present whether strong equivalence holds. 

\begin{proof} (i) Trivially, $h \le g$ implies    $\mathrm{IOE}(h) \supseteq\mathrm{IOE}(g)$   and $\mathrm{AED}(h) \sub \mathrm{AED}(g)$.   
 So it suffices to provide  only one reduction in each case.

\n  \fbox{$\mathrm{IOE}(h) \ge_W\mathrm{IOE}(g)$:} Let $y < h$ be a function in $\mathrm{IOE}( h)$. Let $\hat y_1 < h(2n)$ and $\hat y_2 < h(2n+1)$ be defined by $\hat y_1(n) = y(2n)$ and $\hat y_2(n) = y(2n+1)$. We claim that at least one function among  $\hat y_1, \hat y_2$   belongs to $\mathrm{IOE}( g)$. Suppose otherwise. Then there are  computable functions $x_1,x_2 < g$ which differ almost all the time from $\hat y_1$ and $\hat y_2$, respectively. Since $h$ is nondecreasing, the computable function $x$ defined by $x(2n) = x_1(n)$ and $x(2n+1) = x_2(n)$ satisfies $x < h$. It is clear that $x$ differs almost all the time from $y$, which contradicts   $y \in\mathrm{IOE}(h)$.

\n  \fbox{$\mathrm{AED}(h) \le_S \mathrm{AED}(g)$:} 
Let   $y < g$ be a function in $ \mathrm{AED}( g)$. Let $\hat y(2n+i) = y(n)$ for $i \le 1$, so that $\hat y < h$. Given any computable function $x$, for almost every $n$ we have $x(2n) \neq y(n) $ and $x(2n+1) \neq y(n)$.  Therefore $ x(n)  \neq \hat y(n)$ for almost every $n$. Hence $\hat y \in \mathrm{AED}(h)$.

(ii) is immediate from (i) by iteration,  using that   $a^{2^i} >b$  and $b^{2^i} >a$ for sufficiently large $i$. 
\end{proof}
%
The following operators will be used for the rest of the section.
  \begin{definition}[\bf The operators $L_h$ and $K_h$] \label[definition]{df:KhLh} Let $h$ be a function of the form $2^{\hat h}$ with $\hat h \colon \, \omega \to \omega$, and let $X_h$ be the space of all $h$-bounded functions. 
 For  such a    function   we    view $x(n)$ either as a number, or as a binary string   of length $\hat h(n)$ via the binary expansion with leading zeros allowed. We  define $L_h\colon X_h \to   {}^\omega 2$ by   $L_h(x) = \prod_n x(n) $, i.e.  the concatenation of these strings.  We let $K_h \colon  {}^\omega 2 \to X_h$ be the inverse of $L_h$. \end{definition}

\begin{lemma} Let $a \in \omega - \{0\}$. We have  
\bc $\mathrm{IOE}({\tp{(a^n)}}) \ge_S  \+ D(1/a)$ and $\mathrm{AED}({\tp{(a^n)}}) \le_S  \+ B(1/a)$.  \ec
\end{lemma}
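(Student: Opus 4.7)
The proof naturally splits into two Medvedev reductions via the block-level maps $L_h$ and $K_h$. For the first inequality send a solution $y \in \mathrm{IOE}(h)$ (which we may replace, uniformly computably, by its $h$-truncation so that $y < h$) to $L_h(y)$; for the second, send $y \in \+ B(1/a)$ to $K_h(y)$. Both assignments are uniformly computable, so the task reduces to verifying membership in the target problem. The combinatorial point driving both calculations is that block $n$ has length $a^n$, while the prefix through block $n$ has length $L(n+1) = (a^{n+1}-1)/(a-1)$; so in the limit the earlier blocks together occupy a fraction $1/a$ of the prefix and the final block occupies $(a-1)/a$, which is exactly where the threshold $1/a$ comes from. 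The case $a = 1$ is vacuous on both sides, so one may assume $a \ge 2$.

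For $\mathrm{IOE}(h) \ge_S \+ D(1/a)$ I would argue the contrapositive. Suppose a computable bit sequence $x$ witnesses $\uul\rho(x \lra L_h(y)) > 1/a$. Set $\hat x = K_h(x)$ and let $s_n$ denote the Hamming similarity between $\hat x(n)$ and $y(n)$ as length-$a^n$ bit strings. Choosing $c$ with $1/a < c < \uul\rho(x \lra L_h(y))$, for sufficiently large $n$ one has $\sum_{i \le n} s_i \ge c L(n+1)$ while trivially $\sum_{i < n} s_i \le L(n)$; subtracting yields $s_n \ge (ca-1) a^n/(a-1) - O(1)$, which tends to infinity since $ca > 1$. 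Hence $s_n \ge 1$ for almost every $n$, and the computable function $r(n) := \ol{\hat x(n)}$ then satisfies $r < h$ and $r(n) \neq y(n)$ for almost every $n$, contradicting $y \in \mathrm{IOE}(h)$.

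For $\mathrm{AED}(h) \le_S \+ B(1/a)$ I would again argue the contrapositive. Let $z = K_h(y)$ and suppose some computable $r$ witnesses $z \notin \mathrm{AED}(h)$, i.e., $r(n) = z(n)$ for all $n$ in an infinite set $A$. Replacing $r(n)$ by $0$ whenever $r(n) \ge \tp{(a^n)}$ leaves the agreements on $A$ intact (those values are already $< \tp{(a^n)}$), so we may assume $r < h$ and view each $r(n)$ as an $a^n$-bit string. Set $x = L_h(\ol r)$. For $n \in A$, block $n$ of $x$ is the bitwise complement of block $n$ of $y$, so $x$ and $y$ disagree throughout that block; the agreement count through $L(n+1)$ is therefore at most $L(n) = (a^n-1)/(a-1)$, giving a density at most $(a^n-1)/(a^{n+1}-1) \to 1/a$. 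Since $A$ is infinite, $\uul\rho(x \lra y) \le 1/a$, contradicting $y \in \+ B(1/a)$.

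The main obstacle is simply tracking the block-length bookkeeping; conceptually, the geometric growth of the block sizes converts the cumulative liminf condition into a quantitative statement about a single late block, which is exactly what lets the $\mathrm{IOE}$ and $\mathrm{AED}$ witnesses interact with the density threshold $1/a$.
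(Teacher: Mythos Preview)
Your proof is correct and uses the same reduction maps $L_h$ and $K_h$ and the same witnesses (the bitwise complement $K_h(1-x)$, resp.\ $1-L_h(r)$) as the paper. The only difference is that you argue each inclusion by contrapositive, which in the first part forces you through the quantitative estimate $s_n \ge (ca-1)a^n/(a-1) - O(1)$; the paper instead argues directly that $y\in\mathrm{IOE}(h)$ gives infinitely many blocks on which $L_h(y)$ and $x$ disagree entirely, from which the bound $\uul\rho(x\lra L_h(y))\le 1/a$ is immediate without the intermediate calculation.
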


\begin{proof} 
Let $I_m$ for $m \ge 2$  be the $(m-1)$-th consecutive interval of length $a^m$ in $\omega-\{0\}$, i.e.
\[ I_m = \left [\frac{a^m-1}{a-1} , \frac{a^{m+1} -1 } {a-1}\right) \]
Let $h(m) = \tp{(a^m)}$. Let us first show that $\mathrm{IOE}({\tp{(a^n)}}) \ge_S \+ D(1/a)$. Let $y < \tp{(a^n)}$ be a function in $\mathrm{IOE}( {\tp{(a^n)}})$ and let  $\hat y = L_h(y)$. Given a computable set $x$, let $x'=K_h(1-x)$ (where $1-x$ is the complement of $x$). As $x'(m)=y(m)$ for infinitely many $m$,   for infinitely many intervals $m$,   all bits of $x$ with location in $I_m$    differ from all the bits of $\hat y$ in this location. It follows that $\hat y \in \+ D(1/a)$.

Let us now show that $\mathrm{AED}({\tp{(a^n)}}) \le_S \+ B(1/a)$. Let $y \in \+ B(1/a)$, and let $\hat y = K_h(y)$. Given a computable function $x < h$, let $x' = 1- L_h(x)$. Since $\uul \rho( x' \lra y) > 1/a$, for large enough $n$, there is $k \in I_n$ such that $x'(k) = y(k)$. Hence we cannot have $x(n) = \hat y(n)$.  Thus $\hat y \in \mathrm{AED}({\tp{(a^n)}})$.
\end{proof}

\begin{remark} \label[remark]{rem:D0} Let $\hat h$ be an order  function  such that $\fa a\, \fa^\infty m \,  \hat h(m) \ge a^m$. An argument similar to the one in the foregoing proof shows that    \bc $\mathrm{IOE}({\tp{\hat h(m)}}) \ge_S  \+ D(0)$ and  $\mathrm{AED}({\tp{\hat h(m)}}) \le_S  \+ B(0)$. \ec     In this case one chooses the $m$-th interval of  length $\hat h (m)$. \end{remark}

\begin{thm} \label[theorem]{thm:mainDelta} Fix any  $p\in (0,1/2)$. We have \bc    $\+ D(p)\equiv_W\mathrm{IOE}(\tp{(\tp n)})$ and  $\+ B(p)\equiv_S \mathrm{AED}( \tp{(\tp n)})$. \ec
\end{thm}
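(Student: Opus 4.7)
The plan is to prove each of the four reductions, with two following easily from the preceding lemmas and two requiring binary coding-theoretic constructions.

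For the easy reductions $\+D(p)\le_W \mathrm{IOE}(\tp{(\tp n)})$ and $\mathrm{AED}(\tp{(\tp n)})\le_S \+B(p)$, I would pick any integer $a>1/p$ and use monotonicity: $\+D(p)\subseteq \+D(1/a)$ gives $\+D(p)\le_S \+D(1/a)$ by the identity, while $\+B(p)\subseteq \+B(1/a)$ gives $\+B(1/a)\le_S \+B(p)$. Composing with $\+D(1/a)\le_S \mathrm{IOE}(\tp{(a^n)})$ and $\mathrm{AED}(\tp{(a^n)})\le_S \+B(1/a)$ from the preceding lemma, followed by the base-change equivalences $\mathrm{IOE}(\tp{(a^n)})\equiv_W \mathrm{IOE}(\tp{(\tp n)})$ and $\mathrm{AED}(\tp{(a^n)})\equiv_S \mathrm{AED}(\tp{(\tp n)})$ from \Cref{lem:doubleR}(ii), gives both.

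For the harder Medvedev direction $\+B(p)\le_S \mathrm{AED}(\tp{(\tp n)})$, I would use a uniform list-decoding construction. Fix $\epsilon>0$ with $p+\epsilon<1/2$, pick $a$ large, set $L_n=a^n$, and fix a computable family of binary codes $E_n\colon \{0,\dots,\tp{\tp n}-1\}\to\{0,1\}^{L_n}$ list-decodable at fractional radius $p+\epsilon$ with list size bounded by some $K=K(p)$; such codes exist by the Johnson bound applied to, e.g., Reed--Solomon concatenated with a random binary inner code. Given $y\in\mathrm{AED}(\tp{(\tp n)})$, define $z\in 2^\NN$ by $z\upharpoonright I_n := E_n(y(n))$ on the $n$-th block $I_n$ of length $L_n$. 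If for contradiction some computable bit sequence $x$ had $\uul\rho(x\lra z)\le p$, a block-averaging argument using $L_n/(L_0+\cdots+L_n)\to 1-1/a$ close to $1$ for $a$ large forces the fractional agreement of $x$ with $z$ on $I_n$ to be at most $p+\epsilon$ for infinitely many $n$. List-decoding $(1-x)\upharpoonright I_n$ at radius $(p+\epsilon)L_n$ then produces a uniformly computable list $r_1(n),\dots,r_K(n)$ of candidates that must contain $y(n)$ for those infinitely many $n$, and a pigeonhole argument extracts a single computable $r_{i_0}$ with $r_{i_0}(n)=y(n)$ infinitely often, contradicting $y\in\mathrm{AED}(\tp{(\tp n)})$.

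For the dual Muchnik direction $\mathrm{IOE}(\tp{(\tp n)})\le_W \+D(p)$, since $\+D(p)\subseteq\+D(p')$ for $p<p'$ the statement transfers to smaller $p$, so I may assume $p>1/3$. For such $p$ I would pick $L_n=C_p\cdot\tp n$ with $C_p$ large, $\epsilon>0$ small, and a computable family of binary linear codes $\mathcal C_n\subset\{0,1\}^{L_n}$ of dimension $\tp n$ with minimum distance $d_n\ge(2p+3\epsilon)L_n$ (by Gilbert--Varshamov) and covering radius $r_n\le(p+\epsilon)L_n$ (by sphere covering); these constraints are simultaneously satisfiable precisely when $p>1/3$. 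Since $d_n>2r_n$, the nearest-codeword decoder is unique inside radius $r_n$, so the function $f(n)$ giving the index of the unique codeword at distance $\le r_n$ from $y\upharpoonright I_n$ is well-defined, $y$-computable, and bounded by $\tp{\tp n}$. If for contradiction $f(n)\ne r(n)$ almost always for some computable $r$, then the triangle inequality and the minimum-distance bound yield $d(E_n(r(n)),y\upharpoonright I_n)\ge d_n-r_n\ge(p+2\epsilon)L_n$, so the computable sequence $X$ obtained by concatenating $1-E_n(r(n))$ agrees with $y$ on at least $(p+2\epsilon)L_n$ positions of each block. The hardest step will be lifting this per-block lower bound to $\uul\rho(X\lra y)>p$, since prefixes ending inside a block can dilute the density; a careful choice of block growth rate together with spreading arguments (or a combination of several computable candidate sequences derived from $r$) is required to control the ``in-between'' prefix density. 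Once this lift is achieved, the contradiction with $y\in\+D(p)$ is immediate.
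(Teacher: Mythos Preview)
Your easy directions are correct and match the paper. The two hard directions, however, share a genuine gap that you have not resolved.

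The core tension is between block growth and density control. To pass from a per-block Hamming statement to a statement about $\uul\rho$, the consecutive block lengths must have ratio close to~$1$: otherwise a prefix ending early inside a new block can have density arbitrarily far below the per-block average. Concretely, with $L_n=a^n$ and $a$ large, even if \emph{every} block has agreement exceeding $(p+\epsilon)L_n$, an adversary can place all that agreement at the end of each block and drive $\uul\rho(x\leftrightarrow z)$ below $(p+\epsilon)/\bigl(1+(1-p-\epsilon)(a-1)\bigr)$, which tends to~$0$ as $a\to\infty$. So your block-averaging claim in the $\+B$ direction (``$\uul\rho\le p$ forces infinitely many blocks with agreement $\le p+\epsilon$'') is simply false for the block lengths you chose. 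In the $\+D$ direction you correctly flag this ``in-between'' issue but do not solve it; with $L_n=C_p2^n$ the same computation gives a worst-case prefix fraction of roughly $(p+2\epsilon)/(2-p-2\epsilon)$, which is below $p$ for all $p\in(0,1)$ once $\epsilon$ is small.

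On the other hand, if you take block lengths growing slowly enough to tame the density (say $L_n\approx 2^{n/k}$ with $k$ large, so the ratio is near~$1$), then no code of length $L_n$ can carry $2^{2^n}$ messages. The paper's proof decouples these two requirements by inserting intermediate mass problems. First it passes from $\+D(p)$, $\+B(p)$ to Hamming-distance problems over slowly-growing blocks $\lfloor 2^{n/k}\rfloor$ (this is where the in-between issue is handled). Then list decoding converts these to \emph{slalom} (bounded-size trace) problems $\langle\not\ni^*,u,L\rangle$ with a small bound~$u$. The crucial step you are missing is an \emph{amplification lemma} for slalom problems: $\+D\langle\not\ni^*,u,L\rangle\equiv_W\+D\langle\not\ni^*,u(2\cdot),L\rangle$ and dually with $\equiv_S$ for~$\+B$, which lets one boost $u$ from roughly $2^{2^{n/k}}$ all the way up to $2^{L2^n}$ without changing the Muchnik/Medvedev degree. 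Only after this amplification does one pass to $\mathrm{IOE}(2^{2^n})$ and $\mathrm{AED}(2^{2^n})$. Your direct coding attempts collapse these stages into one, and that is why they cannot close.
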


The rest of the section is dedicated to the proof of \Cref{thm:mainDelta}. The two foregoing lemmas imply    $\+ D(p)\le_W \mathrm{IOE}(\tp{(\tp n)})$ and  $\+ B(p)\ge_S  \mathrm{AED}( \tp{(\tp n)})$. It remains to show the more difficult converse reductions    $\+ D(p)\ge_W  \mathrm{IOE}(\tp{(\tp n)})$ and $\+ B(p)\le_S   \mathrm{AED}( \tp{(\tp n)})$. Let us informally describe the proof of the first reduction, which is     based on arguments in   Monin's proof   \cite{monin:asymptotic} that $\Gamma(A)< 1/2 \LR \Gamma(A) =0$ for each $A \sub \omega$.  

%
Given $A \in \+ D(p)$ we want to find a function $ f\leT A$ that agrees with each computable function $g < 2^{(2^n)}$  infinitely often.  For an appropriate $k$ let $\hat h(n) = \lfloor \tp{n/k} \rfloor$ and $h(n) = \tp{ \hat h(n)}$. We split the bits of $A$ into consecutive intervals of length $\hat h(n)$. The first step  (\Cref{cl:AR}) makes the crucial transition from the density setting towards  the   setting of functions agreeing on certain arguments. We will   show that for $k$ large enough, the function $f_0=K_h(A)< h$ has the property that for each computable function $g< h$,  for infinitely many $n$, $f_0(n)$ and $ g(n) $ disagree on a fraction of fewer than $p$ bits when  viewed as binary strings of length $\hat h(n)$. 

In the second step (\Cref{cl:BR}) we    use $f_0$ to compute a special kind of approximation $s$ to computable functions: for each $n$, $s(n)$ is a set of $L$ many  values (where $L$ is an appropriate constant) such that for every computable function $g < h$ we have $\exists^\infty n\ g(n) \in s(n)$. Such a  function $s$ will be  called a slalom (another term in use is  ``trace"); we also say that $s$ \emph{captures}~$g$. This important step uses a result from the theory of error-correcting codes, which determines the constant $L$. 
 
In the third step (\Cref{lem:doubleR2}), which is non-uniform,  we replace $s$ by a slalom $s'$ such that still $s'(n)$ has size at most $L$, but now all computable functions $ g$ with $g(n) < 2^{(\tp{Ln})}$ are captured infinitely often. 

 In a final, non-uniform step   (\Cref{cl:finalR}) we then  compute from  $s'$   a function $f$ as required; for some $i$, $f(n)$ is the $i$-th block of length $2^n$ of the $i$-th element of $s(n)$. 

%
We now provide the detailed argument. For this recall Definition~\ref{df:DB}.

\begin{definition} \label[definition]{df: A} For strings $x,y$ of length $r$, the normalized Hamming distance is defined as the proportion of bits on which $x,y$ disagree, that is, 
\[ d(x,y) = \frac 1 r |\{ i \colon x(n) (i)    \neq y(n)(i) \}| \] \end{definition}
\begin{definition}  \label[definition]{df: B} Let $h$ be a function of the form $2^{\hat h}$ with $\hat h \colon \, \omega \to \omega$, and let $X_h$ be the space of $h$-bounded functions.  Let $q \in (0, 1/2)$. 
 We   define a relation on $X_h \times X_h$ by: 
    \[x \neq^*_{\hat h,q} y \LR \fa^\infty n  \, [ d(x(n), y(n)) \ge q]   \]   
 namely for almost every $n$ the strings $x(n)$ and $y(n)$  disagree on a proportion of at least  $q$ of the bits. We will usually write $\la \neq^*, \hat h,q \ra $ for this  relation.  
\end{definition}

\begin{claim} \label[claim]{cl:AR} Let $q \in (0, 1/2)$. For each $c\in \omega$ such that $\frac 2 c<q$,  there is $k \in \omega$ such that 
     \begin{eqnarray*} \+ D(q- \frac 2 c) &\ge_S& \+ D \la \neq^*, \lfloor \tp{n/k} \rfloor, q\ra \text{ and } \\ \+ B(q- \frac 2 c) &\le_S & \+ B \la \neq^*, \lfloor \tp{n/k} \rfloor, q\ra.      \end{eqnarray*} 
\end{claim}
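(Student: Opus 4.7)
The plan hinges on choosing block sizes $\hat h(n) = \lfloor \tp{n/k}\rfloor$ that grow geometrically with a very small ratio: writing $N_n = \sum_{m \le n} \hat h(m)$, a routine geometric-series computation shows $N_{n-1}/N_n \to \tp{-1/k}$ as $n \to \infty$. Taking $k$ large forces this limiting ratio arbitrarily close to $1$, which is precisely what allows block-wise agreement counts to be transferred into a bound on the asymptotic lower density of the whole sequence. Since the hypothesis $2/c < q$ gives $(q - 2/c)/q < 1$, any $k$ with $\tp{-1/k} > 1 - 2/(qc)$ --- equivalently $k > 1/\log_2\tfrac{qc}{qc-2}$ --- will work for both reductions.

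For $\+ D(q - 2/c) \ge_S \+ D\la \neq^*, \hat h, q\ra$, I take the uniform reduction $y \mapsto K_h(y)$ with $h = \tp{\hat h}$, i.e.\ I parse $y$ into successive blocks of lengths $\hat h(0), \hat h(1), \ldots$. Assume for contradiction that $K_h(y) \notin \+ D\la \neq^*, \hat h, q\ra$, witnessed by a computable $g < h$ with $d(K_h(y)(n), g(n)) \ge q$ for all $n \ge N_0$. Let $\bar g = 1 - L_h(g)$, a computable bit sequence. In each block $n \ge N_0$, $\bar g$ and $y$ agree on at least $q\, \hat h(n)$ positions, so $|\bar g \lra y \cap [0, N_n)| \ge q(N_n - N_{N_0-1})$, and at any intermediate $M \in [N_{n-1}, N_n)$ the count remains at least $q(N_{n-1} - N_{N_0-1})$ while $M < N_n$. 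Taking the liminf yields $\uul\rho(\bar g \lra y) \ge q \cdot \tp{-1/k} > q - 2/c$, contradicting $y \in \+ D(q - 2/c)$.

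The dual reduction $\+ B(q - 2/c) \le_S \+ B\la \neq^*, \hat h, q\ra$ is the uniform map $y \mapsto 1 - L_h(y)$. For any computable bit sequence $x$, let $g = K_h(x)$, a computable function bounded by $h$; the hypothesis on $y$ gives $d(y(n), g(n)) \ge q$ for almost all $n$, so $x$ and $1 - L_h(y)$ agree on at least $q\, \hat h(n)$ positions of block $n$. The same density estimate produces $\uul\rho(x \lra (1 - L_h(y))) \ge q \cdot \tp{-1/k} > q - 2/c$, placing $1 - L_h(y)$ in $\+ B(q - 2/c)$.

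The main technical obstacle in both arguments is the transfer from the clean block-wise count at the breakpoints $N_n$ to a bound holding at \emph{every} prefix length: since the lower density is taken as a liminf over all $M$, one must control how much the agreement ratio can drop between two consecutive breakpoints. The geometric rate $\tp{1/k}$ being close to $1$ is what makes this drop small --- precisely the factor $N_{n-1}/N_n \to \tp{-1/k}$ --- and the slack $2/c$ between $q$ and $q - 2/c$ is calibrated to absorb it.
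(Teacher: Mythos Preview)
Your proof is correct and follows essentially the same approach as the paper: choose $k$ so that the geometric ratio $\tp{1/k}$ is close to $1$, parse the bit sequence into blocks of length $\hat h(n)=\lfloor \tp{n/k}\rfloor$, and use the key estimate that block-wise agreement of proportion at least $q$ transfers to lower density strictly above $q-2/c$ (the paper isolates this as a separate ``Fact'' and parametrizes via the inequality $\hat h(n+1)\le H(n)/c$, while you work directly with $N_{n-1}/N_n\to \tp{-1/k}$). The reductions $y\mapsto K_h(y)$ and $y\mapsto 1-L_h(y)$ together with the complement trick are identical to the paper's, though your contradiction argument for the $\+ D$ direction is slightly more streamlined than the paper's two-step version.
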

\begin{proof} \NumberQED{cl:AR}
 Let $k $ be large enough so that $\aaa -1 < \frac 1 {2c}$ where  $\aaa = \tp{1/k}$. Let  $\hat h(n) = \lfloor\aaa^n \rfloor$ and  $h = \tp{\hat h}$. Write $H(n) = \sum_{r\le  n} \hat h(r)$. By the usual  formula for  the geometric series,  $$\sum_{r\le n} \aaa^r  = \frac {\aaa^{n+1} -1}  {\aaa -1} \le H(n)+ n+1$$ 
 and therefore $\aaa^{n+1} -1 \le  \frac 1 {2c} (H(n)+n+1)$. 
  If  $n$ is sufficiently large so that  $H(n) \ge n+1 +2c$, we now have
\begin{equation}
 \hat h(n+1) \le \frac 1 c H(n). \label{eq:eq1}
\end{equation}

  To prove the claim we also rely on the following.
\begin{fact} Let $x,y  < h$ be functions such that $\fa^\infty n \, [d(x(n), y(n) ) \leq 1 - q]$. Then $\underline \rho (L_h(x) \lra L_h(y))  > q- \frac 2c$. \end{fact}

 To see this, note that by hypothesis, for almost every $n$ we have that $L_h(y) \rest {H(n)}$ agrees with $L_h(x) \rest {H(n)}$ on a fraction of at least $q$ bits. For any $n$ and any $m$ with $H(n) \leq m \leq {H(n+1)}$, we   have that $L_h(y) \rest m$ agrees with $L_h(x) \rest m$ on a fraction of at least $\frac{H(n) q}{H(n) + \hat h(n+1)}$ bits, which is by (\ref{eq:eq1}) a fraction of at least $\frac{H(n) q}{H(n) + (1/c) H(n)}$ bits. It follows that for almost every $m$, we have that $L_h(y) \rest m$ agrees with $L_h(x) \rest m$ on a fraction of at least $\frac{q}{1 + 1/c}> q - \frac 2c$ bits. This  implies in particular that $\underline \rho (L_h(x) \lra L_h(y))  > q- \frac 2c$. The fact is proved.\\

Firstly we  show that $\+ D(q- \frac 2c) \ge_S \+ D \la \neq^*, \lfloor \tp{n/k} \rfloor, q\ra$. Let $y \in \+ D(q- \frac 2c)$. Let $y' = K_h(y)$. By the fact above,  there is no  computable function $x < h$ such that $\fa^\infty n \, [d(x(n), y'(n) ) \leq 1 - q]$, as otherwise we would have $L_h(y') = y \notin \+ D(q- \frac 2c)$ which is a contradiction.    
Therefore, for every computable function $x < h$ we have $\exists^\infty n \, [d(x(n), y'(n) ) > 1 - q]$. Now let $x < h$ be a computable function and let $x' = K_h(1 - L_h(x))$. As $x' < h$ is computable we must have $\exists^\infty  n \, [d(x'(n), y'(n) ) > 1 - q]$. But then we also have $\exists^\infty  n \, [d(x(n), y'(n) ) < q]$. As this is true for any computable function $x < h$ we then have $y' \in \+ D \la \neq^*, \lfloor \tp{n/k} \rfloor, q\ra$. 
  

Secondly we  show that $\+ B(q- 2/c) \le_S \+ B \la \neq^*, \lfloor \tp{n/k} \rfloor, q\ra$.

\n
Let $ y \in \+ B \la \neq^*, \lfloor \tp{n/k} \rfloor, q\ra$. Thus, $y< h$ and  $\fa^\infty n \, [d(x(n), y(n) \ge q]$ for each computable function $x < h$. Let $y' = K_h(1  -  L_h(y))$. Then \bc $\fa^\infty n \, [d(x(n), y'(n) ) \le  1 - q]$ \ec for each computable function $x < h$. By the fact above, we then have that $\underline \rho (L_h(x) \lra L_h(y'))  > q- 2/c$ for each computable function $x < h$. It follows that $L_h(y') \in B(q- 2/c)$. 
\end{proof}
 
For $L \in \omega$, an \emph{$L$-slalom} (also called trace) is a function $s\colon \, \omega \to \omega^{[\le L]}$, i.e.\ a function that  maps  natural numbers to sets  of natural numbers with a  size of at most $L$. 
\begin{definition}  \label[definition]{df: trace} Fix  a function $u \colon \omega \to \omega$ and $L \in \omega$. Let $X$ be the space of $L$-slaloms (or  traces)   $s$ such that $\max s(n) < u(n)$ for each $n$. Thus $s$ maps natural numbers to sets of natural numbers of size at most $L$, represented by strong indices. Let $Y$  be the set of functions such that  $y(n)  <  u (n)$ for each $n$. Define a relation on $X \times Y$ by
 \[s \not \ni^*_{u,L} y \LR \fa^\infty n [s(n) \not \ni y(n)]. \] 
  We will   write $\la \not \ni^*, u ,L \ra $ for this  relation. 
\end{definition} 

For what follows, we use the list decoding capacity theorem from the theory of error-correcting codes due to  Elias~\cite{elias1991}.  Given $q $ as above  and $L \in \omega$, for each $r$ there is a ``fairly large" set $C$ of strings of length $r$ (the  allowed code words) such that for each string, at most $L$ strings in $C$ have normalized Hamming distance less than $q$ from $\sss$. Intuitively speaking,  there is only a small set of strings that could be the error-corrected  version of $\sss$.

Given a string $\sss$ of length $r$, let $B_q(\sss)$ denote  an  open  ball  of radius $q$ around $\sss$ in the normalized Hamming distance, namely, \bc $B_q(\sss) = \{ \tau \in  {}^r 2 \colon \, \sigma, \tau \text{  disagree on fewer than $qr$ bits}\}$. \ec
 \begin{theorem}[List decoding, Elias \cite{elias1991}] \label[theorem]{lem:ListDec} Let $q\in (0,1/2)$.  There are $\epsilon >0$ and $L \in \omega$ such that for each $r$, there is a set $C$ of $\tp{\lfloor \epsilon r \rfloor }$ strings of length $r$ as follows: \bc $\forall \sss \in {}^r 2 \, [ | B_q(\sss) \cap C | \le L]$.  \ec
 \end{theorem}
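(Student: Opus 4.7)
The plan is to establish this via the probabilistic method (random coding). First I would choose a random code $C$ by independently and uniformly sampling $N := \tp{\lfloor \epsilon r \rfloor}$ binary strings of length $r$, and then show that with positive probability no $\sss \in 2^r$ has more than $L$ elements of $C$ in $B_q(\sss)$. Existence of a good $C$ then follows.

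The key quantitative ingredient is the standard entropy bound on ball volumes: for $q \in (0, 1/2)$,
\[ |B_q(\sss)| \;=\; \sum_{i < qr} \binom{r}{i} \;\le\; \tp{H(q) r}, \]
where $H(q) = -q \log_2 q - (1-q)\log_2(1-q)$ is the binary entropy, which follows from Stirling or a direct generating-function estimate. Consequently, a uniformly random string of length $r$ lies in any fixed $B_q(\sss)$ with probability at most $p_r := \tp{(H(q)-1)r}$.

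Fix $\sss \in 2^r$ and let $X_\sss := |C \cap B_q(\sss)|$. A union bound over $(L{+}1)$-element subcodes gives
\[ \Pr[X_\sss \geq L+1] \;\le\; \binom{N}{L+1}\, p_r^{L+1} \;\le\; (N p_r)^{L+1} \;\le\; \tp{-(L+1)\delta r}, \]
where $\delta := 1 - H(q) - \epsilon$. Since $q < 1/2$ implies $H(q) < 1$, we may pick any $\epsilon$ with $0 < \epsilon < 1 - H(q)$, so that $\delta > 0$. A second union bound over all $2^r$ centers yields
\[ \Pr\!\left[\exists \sss \in 2^r\ X_\sss > L\right] \;\le\; 2^r \cdot \tp{-(L+1)\delta r} \;=\; \tp{(1 - (L+1)\delta)\,r}. \]
Choosing $L$ with $(L+1)\delta > 1$ makes this quantity tend to $0$; in particular it is less than $1$ for all sufficiently large $r$, so the required $C$ exists for such $r$.

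For the finitely many remaining small values of $r$, one can simply take $C$ to be any set of $\tp{\lfloor \epsilon r \rfloor}$ strings of length $r$; by enlarging $L$ to the maximum of these finitely many values of $|C|$, the ball condition becomes trivially satisfied. The main obstacle is really the entropy estimate for ball volumes (needing tight enough constants for the $\epsilon$ vs.\ $1 - H(q)$ comparison), but this is a classical calculation; once in hand, the two union bounds assemble into the result.
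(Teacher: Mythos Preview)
The paper does not supply a proof of this theorem; it is quoted as a black box from Elias's paper on list decoding. Your random-coding argument is the standard existence proof for list-decodable codes at any rate below $1-H(q)$, and the outline is correct: the entropy bound on Hamming-ball volumes gives $|B_q(\sigma)| \le 2^{H(q)r}$, after which the two union bounds (over $(L{+}1)$-subsets of the code, then over centres $\sigma$) finish the job exactly as you describe.

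One minor technical gap is worth flagging: since you sample the $N$ codewords independently with replacement, $C$ is a priori a multiset and may have fewer than $N = 2^{\lfloor \epsilon r\rfloor}$ distinct elements, whereas the statement asks for a \emph{set} of exactly that size. You can repair this by also bounding the collision probability (a birthday estimate gives at most $N^2 2^{-r} \le 2^{(2\epsilon-1)r}$, which tends to $0$ once $\epsilon < 1/2$; since you already take $\epsilon < 1 - H(q)$ you may as well take $\epsilon$ below $1/2$ too) and folding that event into the final union bound. Alternatively, sample without replacement: the indicators of membership in a fixed ball are then negatively correlated, so the same tail estimate holds. Either patch is routine and does not affect the choice of $\epsilon$ and $L$.
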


The previous theorem allows us to show the following:

\begin{claim} \label[claim]{cl:BR} Given $q < 1/2$, let $L, \epsilon $ be as in \Cref{lem:ListDec}. Fix  a nondecreasing  computable function $\hat h$, and   let  $u(n)= \tp{\lfloor \epsilon \hat h(n) \rfloor }$. We have 
\bc $\+ D \la \neq^*,  \hat h , q\ra \ge_S \+ D \la \not \ni^*, u,L \ra$ and $  \+ B \la \neq^*,  \hat h , q\ra \leq_S  \+ B \la \not \ni^*, u,L \ra$.  \ec 
\end{claim}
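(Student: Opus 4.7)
The plan is to use the list decoding theorem as an ``interpretation'' between the two reducibility statements: the codewords provide a correspondence between strings of length $\hat h(n)$ (where the Hamming-distance relation lives) and small indices in $u(n)$ (where the slalom relation lives). Fix $q, L, \epsilon$ as given, and, for each $n$, apply \Cref{lem:ListDec} (with $r = \hat h(n)$) to select a code $C_n \subseteq 2^{\hat h(n)}$ of size $u(n) = 2^{\lfloor \epsilon \hat h(n) \rfloor}$ such that every $B_q(\sigma)$ contains at most $L$ elements of $C_n$. Since $\hat h$ is a computable nondecreasing function, and since such a code always exists, we can choose $C_n$ computably uniformly in $n$ by a brute-force search, and list its elements as $c^{(n)}_0, c^{(n)}_1, \ldots, c^{(n)}_{u(n)-1}$.

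For the first reduction $\+ D \la \neq^*, \hat h, q\ra \ge_S \+ D \la \not \ni^*, u, L\ra$, given a solution $x \in \+ D \la \neq^*, \hat h, q\ra$ I would uniformly compute the $L$-slalom $s(n) = \{ i < u(n) : c^{(n)}_i \in B_q(x(n)) \}$. This is a valid $L$-slalom by the list-decoding property. To verify that $s$ solves $\+ D \la \not \ni^*, u, L\ra$, take any computable $z$ with $z(n) < u(n)$ and form the computable function $y(n) = c^{(n)}_{z(n)}$, which satisfies $y < h$. By the assumption on $x$, there are infinitely many $n$ with $d(x(n), y(n)) < q$, i.e., $c^{(n)}_{z(n)} \in B_q(x(n))$, which means $z(n) \in s(n)$ for those $n$.

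For the second reduction $\+ B \la \neq^*, \hat h, q\ra \le_S \+ B \la \not \ni^*, u, L\ra$, given a solution $y \in \+ B \la \not \ni^*, u, L\ra$ I would uniformly compute $\hat y(n) = c^{(n)}_{y(n)}$, which has $\hat y < h$. Suppose toward a contradiction that $\hat y \notin \+ B \la \neq^*, \hat h, q\ra$: then there is some computable $x < h$ with $\exists^\infty n\, d(x(n), \hat y(n)) < q$. Define the computable $L$-slalom $s(n) = \{ i < u(n) : c^{(n)}_i \in B_q(x(n)) \}$ (again valid by list decoding). On each $n$ witnessing $d(x(n), \hat y(n)) < q$, the codeword $c^{(n)}_{y(n)} = \hat y(n)$ lies in $B_q(x(n))$, so $y(n) \in s(n)$. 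This contradicts $y \in \+ B \la \not \ni^*, u, L\ra$.

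I don't expect any serious obstacle here, since the list-decoding theorem has been packaged exactly for this purpose; the one thing to be a bit careful about is computability of the chosen codes, which is handled by the fact that $\hat h$ is computable and a valid code always exists (so a bounded search terminates). Both reductions are visibly uniform and fit the Medvedev framework, matching the $\le_S$ assertion in each direction.
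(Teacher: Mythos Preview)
Your proof is correct and follows essentially the same approach as the paper: you pick a uniformly computable listing of the list-decoding code at each level, use it to turn a Hamming-close witness into an $L$-slalom for the $\+ D$-direction, and to send a slalom-avoiding $y$ to a codeword sequence for the $\+ B$-direction. The only cosmetic differences are your indexing of the codes by $n$ rather than by $r=\hat h(n)$ and your phrasing of the $\+ B$-argument as a contradiction; the paper also remarks explicitly that the listing has no repetitions, which is what guarantees $|s(n)|\le L$, so it is worth stating.
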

\begin{proof} \NumberQED{cl:BR}
%
Given a number  $r$ of the form $\hat h(n)$, one can  compute a set $C= C_r$ as in \Cref{lem:ListDec}. Since $|C_r| = \tp{\lfloor \epsilon r \rfloor}$  there is  a uniformly computable sequence $\seq {\sss^r_i}_{i<  \tp{\lfloor \epsilon r \rfloor}}$ listing $C_r$ in increasing lexicographical order. 
 
%


Firstly we show that $\+ D \la \neq^*,  \hat h , q\ra \ge_S \+ D \la \not \ni^*, u,L \ra$. Suppose that $y \in \+ D \la \neq^*,  \hat h , q\ra$. Let $s$ be the uniformly $y$-computable $L$-slalom such that
\bc $s(n) = \{i < u(n) \colon \, d(\sss^{\hat h(n)}_i, y(n) ) < q\}$. \ec 
Let now $x < u$ be a computable function. Since $d(\sss^{\hat h(n)}_{x(n)}, y(n) ) < q$ for  infinitely many $n$,  for  infinitely many $n$  we have $x(n) \in s(n)$. It follows that $s \in  {\+ D \la \not \ni^*, u,L \ra}$, as required.

Secondly we  show that $\+ B \la \not \ni^*, u,L \ra \geq_S \+ B \la \neq^*,  \hat h , q\ra$. Suppose that $y \in {\+ B \la \not \ni^*, u,L \ra}$.  Let  $h = 2^{\hat h}$,  and let $\wt y < h $ be the function given by $\wt y(n)= \sss^{\hat h(n)}_{y(n)}$. We show that $\wt y \in \+ B \la \neq^*,  \hat h , q\ra$. Let $x < h$ be a computable function. Let %
 \bc  $s_x(n) = \{i < u(n) \colon \, d(\sss^{\hat h(n)}_i, x(n) )< q\}$. \ec 
Note that $\seq{s_x(n)}\sN n$ is an $L$-slalom because the listing $\seq {\sss^r_i}_{i<  \tp{\lfloor \epsilon r \rfloor}}$ has no repetitions. Since $y \in \+ B \la \not \ni^*, u,L \ra$, for almost every $n$ we have $y(n) \not \in s_x(n)$. Hence also for almost every $n$ we have $d(\wt y(n),x(n) \ge q$, as required.
\end{proof} 

We next need an amplification tool   in the context of slaloms. The proof is almost verbatim the one in \Cref{lem:doubleR}(i), so we omit it.

\begin{claim} \label[claim]{lem:doubleR2} Let $ L \in \omega$, let the  computable function $u$ be nondecreasing and let $w(n) = u(2n)$. We have   \bc $\+ D \la \not \ni^*, u,L \ra \equiv_W \+ D \la \not \ni^*, w,L \ra$  and $\+ B \la \not \ni^*,u, L \ra \equiv_S \+ B\la \not \ni^*,w,L\ra$. \ec
 \end{claim}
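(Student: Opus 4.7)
The plan is to mimic the proof of \Cref{lem:doubleR}, with slaloms playing the role of functions. Since $u$ is nondecreasing and $w(n)=u(2n)$, we have $u\le w$ pointwise; this asymmetry is the only source of complication. The easy Medvedev reductions $\+ D \la \not\ni^*, u,L\ra \le_S \+ D \la \not\ni^*, w,L\ra$ and $\+ B \la \not\ni^*, w,L\ra \le_S \+ B \la \not\ni^*, u,L\ra$ come essentially for free. For $\+ D$, given a slalom $s$ bounded by $w$ that captures every computable $y<w$ infinitely often, truncating to $s'(n)=s(n)\cap [0,u(n))$ yields an $L$-slalom bounded by $u$ that still captures every computable $y<u$. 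For $\+ B$, every $y\in \+ B\la \not\ni^*, u,L\ra$ already witnesses $\+ B\la \not\ni^*, w,L\ra$: any computable $L$-slalom $s$ bounded by $w$ can be truncated to $s'(n)=s(n)\cap [0,u(n))$, and since $y(n)<u(n)$ the truncation does not affect whether $y(n)$ is caught, so eventual avoidance of $y$ by $s'$ forces eventual avoidance by $s$.

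For the remaining direction of the $\+ D$ equivalence, I would follow the IOE part of \Cref{lem:doubleR}. Given $s\in \+ D\la \not\ni^*, u,L\ra$, define the two $L$-slaloms $\hat s_1(n)=s(2n)$ and $\hat s_2(n)=s(2n+1)\cap [0,w(n))$, both bounded by $w$. I claim at least one lies in $\+ D\la \not\ni^*, w,L\ra$. Supposing otherwise, fix computable $y_1,y_2<w$ eventually avoided by $\hat s_1$ and $\hat s_2$ respectively, and paste them into $y(2n)=y_1(n)$, $y(2n+1)=y_2(n)$; this $y$ is computable and bounded by $u$, so $s(m)\ni y(m)$ for infinitely many $m$. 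Separating these $m$ by parity yields either $\hat s_1(n)\ni y_1(n)$ infinitely often, or $\hat s_2(n)\ni y_2(n)$ infinitely often — the truncation in $\hat s_2$ is harmless because $y_2(n)<w(n)$. Either alternative contradicts the choice of the $y_i$. Since one cannot effectively decide which of $\hat s_1,\hat s_2$ works, this direction is only Muchnik.

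For the remaining direction of the $\+ B$ equivalence, I would follow the AED part of \Cref{lem:doubleR}. Given $y\in \+ B\la \not\ni^*, w,L\ra$, duplicate to $\hat y(2n)=\hat y(2n+1)=y(n)$, which is bounded by $u$. For an arbitrary computable $L$-slalom $s$ with $\max s(n)<u(n)$, the two slaloms $s_1(n)=s(2n)$ and $s_2(n)=s(2n+1)\cap [0,w(n))$ are computable $L$-slaloms bounded by $w$, so by hypothesis each eventually misses $y$. Combining these conclusions at even and odd indices gives $s(m)\not\ni \hat y(m)$ for almost all $m$, so $\hat y\in \+ B\la \not\ni^*, u,L\ra$. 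This construction is uniform, making the $\+ B$-equivalence Medvedev.

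The only genuine obstacle is the asymmetry $u(2n+1)\ge u(2n)=w(n)$, which forces a truncation of the odd-indexed slalom values whenever one moves between bound $u$ and bound $w$. Verifying that these truncations preserve both the \emph{captures $y$} and \emph{misses $y$} conditions — because the relevant values of $y$ are already strictly below $w(n)$ — is routine but is the subtlety that deserves attention.
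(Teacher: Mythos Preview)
Your proof is correct and follows exactly the route the paper intends: the paper omits the proof entirely, saying it is ``almost verbatim'' the proof of \Cref{lem:doubleR}, and your argument is precisely that adaptation, with slaloms replacing functions and the minor truncation $s(n)\cap[0,u(n))$ inserted where the bound asymmetry $u(2n+1)\ge w(n)$ requires it. The truncation subtlety you flag is real but, as you observe, harmless because the relevant values already lie below $w(n)$; this is the only point where the slalom version is not literally verbatim from \Cref{lem:doubleR}.
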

Iterating the claim,  starting with the function $\hat h(n) = \lfloor \tp{n/k}\rfloor$ with $k$ as in \Cref{cl:AR},  we   obtain that   $\+ D\la \not \ni^*,\tp{\hat h}, L \ra \equiv_W \+ D \la \not \ni^*,\tp{(L 2^n)}, L \ra$  and   $\+ B\la \not \ni^*,\tp{\hat h}, L \ra \equiv_S \+ B \la \not \ni^*,\tp{(L 2^n)}, L \ra$. 
It remains to verify the following, which would work for any computable function $\hat h(n)$ in place of the  $2^n$ in the exponents.
\begin{claim} \label[claim]{cl:finalR} \
 \bc $\+ D \la \not \ni^*,\tp{(L 2^n)}, L \ra \geq_W\mathrm{IOE}(\tp{(\tp n)})$  and  $  \+ B \la \not \ni^*,\tp{(L 2^n)}, L \ra  \leq_S \mathrm{AED}( \tp{(\tp n)})$. \ec
\end{claim}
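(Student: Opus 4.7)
The two inequalities will be proved separately. The $\+ B$ direction turns out to be a uniform (indeed identity) reduction coming from a set-theoretic containment, while the $\+ D$ direction is Muchnik and rests on a pigeonhole argument over $L$ candidate functions extracted from the slalom.

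For $\+ B\la\not\ni^*, \tp{(L\tp n)}, L\ra \le_S \mathrm{AED}(\tp{(\tp n)})$, I would show the containment $\mathrm{AED}(\tp{(\tp n)}) \subseteq \+ B\la\not\ni^*, \tp{(L\tp n)}, L\ra$, which yields the strong reduction via the identity. Given $y \in \mathrm{AED}(\tp{(\tp n)})$ and any computable $L$-slalom $s$ with $\max s(n) < \tp{(L\tp n)}$, enumerate $s(n)$ in increasing order as $r_1(n),\ldots,r_{|s(n)|}(n)$ (setting $r_i(n)=0$ if $|s(n)|<i$), so each $r_i$ is a computable function. To apply AED despite the larger bound on $r_i$, I would truncate to $\hat r_i(n) := r_i(n)$ if $r_i(n)<\tp{(\tp n)}$ and $\hat r_i(n):=0$ otherwise; then $\hat r_i<\tp{(\tp n)}$ is computable, so $y(n)\ne\hat r_i(n)$ for almost all $n$ by AED. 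On the remaining indices, where $r_i(n)\ge\tp{(\tp n)}$, we automatically have $r_i(n)>y(n)$; hence $y(n)\ne r_i(n)$ eventually for each $i\le L$, and therefore $y(n)\notin s(n)$ eventually.

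For $\+ D\la\not\ni^*, \tp{(L\tp n)}, L\ra \ge_W \mathrm{IOE}(\tp{(\tp n)})$, given $s$ I view each element of $s(n)$ as a binary string of length $L\cdot \tp n$ split into $L$ consecutive blocks of length $\tp n$, and list the elements of $s(n)$ increasingly as $e_1(n)<\cdots<e_{|s(n)|}(n)$. I define, uniformly in $s$, the $L$ candidate functions $f_1,\ldots,f_L\le_T s$ by taking $f_i(n)$ to be the $i$-th block of $e_i(n)$, padded with $0$ when $|s(n)|<i$; then each $f_i(n)<\tp{(\tp n)}$. The plan is to show, by contradiction, that at least one $f_i$ lies in $\mathrm{IOE}(\tp{(\tp n)})$. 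If not, fix computable witnesses $g_i<\tp{(\tp n)}$ with $g_i(n)\ne f_i(n)$ eventually, and form the computable function $G$ whose value $G(n)$ is the concatenation of the length-$\tp n$ binary representations of $g_1(n),\ldots,g_L(n)$, so that $G<\tp{(L\tp n)}$. The $\+ D$-property of $s$ then gives $G(n)\in s(n)$ for infinitely many $n$; at each such $n$, $G(n)=e_{k(n)}(n)$ for some $k(n)\le L$, and comparing $k(n)$-th blocks yields $g_{k(n)}(n)=f_{k(n)}(n)$. Pigeonholing on the finitely many possible values of $k(n)$ produces some fixed $k$ for which $g_k(n)=f_k(n)$ infinitely often, contradicting the choice of $g_k$. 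The only genuinely non-routine step is this pigeonhole, and the non-uniform selection of the winning $f_i$ is precisely what forces the reduction to be Muchnik rather than Medvedev.
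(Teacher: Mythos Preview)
Your proof is correct. For the $\+ D$ inequality you do exactly what the paper does: extract from the slalom $s$ the $L$ candidate functions $f_i$ given by the $i$-th block of the $i$-th listed element of $s(n)$, assume all fail IOE, concatenate the witnesses into a single computable $G<\tp{(L\tp n)}$, and use a pigeonhole on the position where $G(n)$ occurs in $s(n)$ to derive a contradiction.

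For the $\+ B$ inequality your approach is genuinely different and in fact simpler than the paper's. You observe the set-theoretic containment $\mathrm{AED}(\tp{(\tp n)})\subseteq \+ B\la\not\ni^*,\tp{(L\tp n)},L\ra$ directly: since $y<\tp{(\tp n)}\le \tp{(L\tp n)}$, the function $y$ already lives in the right space, and by listing the elements of $s(n)$ as computable functions $r_1,\ldots,r_L$ you avoid them all eventually. The paper instead sends $y$ to the function $y'<\tp{(L\tp n)}$ whose value $y'(n)$ consists of $L$ repeated copies of $y(n)$, and then argues via the block structure (if $y'(n)$ were the $i$-th element of $s(n)$, its $i$-th block $y(n)$ would equal the computable diagonal function $x_i(n)$). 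Both reductions are uniform; yours avoids the repetition construction entirely. One small remark: your truncation step $\hat r_i$ is unnecessary, since by the paper's Definition~\ref{df:relations} the relation $\neq^*_h$ places the bound $h$ on the $y$-side only, so $\mathrm{AED}(h)$ already defeats \emph{all} computable functions $x$, bounded or not. The truncation does no harm, however, and your argument goes through either way.
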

\begin{proof} \NumberQED{cl:finalR}
 Given  $n$, we write  a number $k< \tp{(L \tp n)}$ in binary with leading zeros if necessary, and so can view $k$ as a binary string of length $L \tp n$. We view such a string as    consisting  of $L$ consecutive blocks of length  $\tp n$.

Firstly we show $\+ D \la \not \ni^*,\tp{(L 2^n)}, L \ra \geq_W\mathrm{IOE}(\tp{(\tp n)})$. Let $s \in \+ D \la \not \ni^*,\tp{(L 2^n)}, L \ra$. For every $i < L$, let $y_i$ be the $s$-computable function such that $y_i(n)$ is the $i$-th block of the $i$-th element of $s(n)$. Suppose for a contradiction that for every $i$ we have a computable function $x_i \leq 2^{(2^n)}$ which differs on almost every argument from $y_i$. Let $x \leq 2^{(L2^n)}$ be the computable function defined by $x(n)$ to be the concatenation of $x_i(n)$ for each $i \leq L$. Then also for almost every $n$ we have $s(n) \not \ni x(n)$, which is a contradiction. Therefore we must have that $y_i \in\mathrm{IOE}(\tp{(\tp n)})$ for some $i \leq L$.
 
Secondly we  show $\mathrm{AED}( \tp{(\tp n)}) \geq_S \+ B \la \not \ni^*,\tp{(L 2^n)}, L \ra$. Let $y \in \mathrm{AED}( \tp{(\tp n)})$. That is, $y< 2^{(2^n)}$ and  $\fa^\infty n \, x(n) \neq y(n)$ for each computable function $x$.    Let $y'$ be the function bounded by $2^{(L2^n)}$ such that for each $n$, each block of $y'(n)$ equals $y(n)$.  
Given a computable $L$-slalom $s$ with $\max s(n) <  \tp{(L 2^n)}$, for $i< L$ let $x_i$ be the computable function  such that $x_i(n) $ is the $i$-th block of the $i$-th element of $s(n)$ (as before we may assume that each string in $s(n)$ has length $L2^n$). For sufficiently large $n$,  we have for all $i < L$ that $y(n) \neq x_i(n)$.  Hence $\fa^\infty n \, s(n) \not \ni y'(n)$ and thus $y' \in \+ B \la \not \ni^*,\tp{(L 2^n)} \ra$.
\end{proof}

Using the results above, we can now       finish the arguments   that   $\+ D(p) \geq_W \mathrm{IOE}(\tp{(\tp n)})$ and  $\mathrm{AED}( \tp{(\tp n)}) \geq_S \+ B(p)$. 

\begin{proof}[Proof of \Cref{thm:mainDelta}, completed] \NumberQED{thm:mainDelta} \ 

\n Pick $c$ large enough such that 
 ${q= p+\frac 2c} < 1/2$.  
 By \Cref{cl:AR} there is $k$ such that 
\bc $\+ D(p) \geq_S \+ D \la \neq^*, \lfloor \tp{n/k} \rfloor, q\ra$ and  $\+ B \la \neq^*, \lfloor \tp{n/k} \rfloor, q\ra \geq_S \+ B(p)$. \ec

\n  By \Cref{cl:BR} there are $L\in \omega$ and  $\epsilon>0$ such that where $\hat h(n) =  \lfloor \tp{n/k} \rfloor$ and $u(n)= \tp{\lfloor \epsilon \hat h(n) \rfloor }$,  
\bc $\+ D \la \neq^*,  \hat h , q\ra \ge_S \+ D \la \not \ni^*, u,L \ra$ and $ \+ B \la \not \ni^*, u,L \ra \geq_S \+ B \la \neq^*,  \hat h , q\ra$. \ec 


\n Applying \Cref{lem:doubleR2}  sufficiently often we have

\bc $\+ D \la \not \ni^*, u,L \ra \equiv_W \+ D \la \not \ni^*, \tp{(L 2^n)}, L \ra$ and $\+ B \la \not \ni^*,u, L \ra \equiv_S \+ B\la \not \ni^*, \tp{(L 2^n)}, L\ra$. \ec


\n Finally

\bc $\+ D \la \not \ni^*,\tp{(L 2^n)}, L \ra \geq_W\mathrm{IOE}(\tp{(\tp n)})$ and $\mathrm{AED}( \tp{(\tp n)}) \geq_S \+ B \la \not \ni^*,\tp{(L 2^n)}, L \ra$.\ec
by \Cref{cl:finalR}. Combining all this  yields the theorem.
\end{proof}
%
 \subsection*{The   $\Delta$-value of a Turing oracle} \label[subsection]{s:GD}
  The   $\Gamma$-value of a Turing oracle $A$ was   defined in \Cref{ss:Gamma} of the introduction. It is closely connected to the classes $\+ D(p)$ as in Eqn.\  \ref{eqn:GammaD} above. (Recall that we now view these classes as mass problems, so in   Eqn.\ \ref{eqn:GammaD}  we replace $A \in \+ D(p)$ there by  $\ex Y \le_T A \,  [ Y \in \+ D(p)] $.) Its dual   was  first considered by Merkle, Stephan and the second author \cite[Part 3]{LogicBlog:16}, and then in \cite[Section 7]{Nies:17a}. 
\begin{definition}  \label[definition]{def:delta} Let \begin{eqnarray*} \delta(A)&=&\inf\{ \underline \rho(A\leftrightarrow X)\colon \, X \, \text{computable} \}\\ \Delta(A) & =& \sup \{ \delta(Y) \colon  \, Y \le_T A \}.\end{eqnarray*}
\end{definition}
Intuitively,   $\Gamma(A)$ measures  how well computable sets can approximate    the sets that $A$ computes, counting   the asymptotically worst case (the infimum over all $Y \le_T A$). In contrast,   $\Delta(A)$ measures how well the sets   that $A$ computes can approximate   the computable sets, counting  the asymptotically best case (the supremum over all $Y \le_T A$).  Clearly $\Delta(A) \le 1/2$ for each $A$. 
 \begin{cor}\label{cor:gamdel} (i) $\Gamma(A) < 1/2$ implies $\Gamma(A) =0$. 
 
 \n (ii) $\Delta(A)>0 $ implies $\Delta(A) = 1/2$.
 \end{cor}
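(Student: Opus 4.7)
The plan is to read both parts off of Theorem~\ref{thm:mainDelta}, which states that all $\+ D(p)$ (resp.\ $\+ B(p)$) with $p \in (0, 1/2)$ are mutually Muchnik (resp.\ Medvedev) equivalent.

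For (i), I would start from $\Gamma(A) < 1/2$ and pick $p \in (\Gamma(A), 1/2)$; by definition of $\Gamma$, some $Y_0 \equiv_T A$ satisfies $\gamma(Y_0) < p$, hence $Y_0 \in \+ D(p)$. For an arbitrary $q \in (0, 1/2)$, Theorem~\ref{thm:mainDelta} yields $\+ D(p) \equiv_W \+ D(q)$, so $Y_0$ computes some $Y' \in \+ D(q)$, giving $\gamma(Y') \le q$ with $Y' \le_T A$. The one delicate point is the asymmetry between the definition of $\Gamma$, which quantifies over $Y \equiv_T A$, and the conclusion just obtained, which only gives $Y' \le_T A$. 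To bridge this gap, I would define $Y''$ by overwriting $Y'$ on the density-zero set $S = \{2^k : k \in \omega\}$ with the bits of $A$: set $Y''(2^k) = A(k)$ and $Y'' = Y'$ off $S$. Then $Y'' \equiv_T A$, and since alterations on a set of density zero leave all lower densities unchanged, $\gamma(Y'') = \gamma(Y') \le q$. Hence $\Gamma(A) \le q$ for every $q \in (0, 1/2)$, giving $\Gamma(A) = 0$.

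For (ii), no coding step is required, since $\Delta$ is defined via $\le_T A$, which matches the mass-problem formulation directly. I would first note the trivial upper bound $\Delta(A) \le 1/2$, valid for every $A$: for any $Y$ and any computable $X$, the sets $X \lra Y$ and $\bar X \lra Y$ partition $\omega$, so $\underline\rho(X \lra Y) + \underline\rho(\bar X \lra Y) \le 1$, hence the minimum of the two is at most $1/2$; ranging over $X$ gives $\delta(Y) \le 1/2$. Now assume $\Delta(A) > 0$; then some $Y \le_T A$ has $\delta(Y) > 0$, and consequently $Y \in \+ B(p)$ for some $p > 0$. For an arbitrary $q \in (0, 1/2)$, Theorem~\ref{thm:mainDelta} gives $\+ B(p) \equiv_S \+ B(q)$, producing $Y' \le_T Y \le_T A$ with $Y' \in \+ B(q)$ and therefore $\delta(Y') \ge q$. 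Thus $\Delta(A) \ge q$ for every $q < 1/2$, and combined with the upper bound we conclude $\Delta(A) = 1/2$.

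The main obstacle, such as it is, is the $\le_T$ versus $\equiv_T$ mismatch in (i); the density-zero coding trick is the only genuine computation and handles it in one line. Part (ii) is essentially immediate from Theorem~\ref{thm:mainDelta} once the easy bound $\delta(Y) \le 1/2$ is recorded.
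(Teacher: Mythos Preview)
Your proof is correct and follows essentially the same route as the paper. The paper's argument is terser: it simply records the implications $\Gamma(A)< p \Rightarrow \exists Y \le_T A\,[Y \in \+ D(p)] \Rightarrow \Gamma(A)\le p$ (and the dual pair for $\Delta$ and $\+ B(p)$) as known from Eqn.~(\ref{eqn:GammaD}), and then invokes Theorem~\ref{thm:mainDelta}. Your density-zero coding trick is exactly the standard justification for the second implication, so you have made explicit a step the paper treats as background; otherwise the two arguments coincide.
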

 
 \begin{proof} By the definitions, for each $p \in  (0, 1/2)$, we have \bc $\Gamma(A)< p \RA  \ex Y \le_T A \,   [Y \in \+ D(p)]  \RA \Gamma(A)\le p$    \ec and dually \bc $\Delta(A)>  p \RA  \ex Y \leT A \, [Y  \in \+ B(p)] \RA \Delta(A)\ge p$. \ec Now apply \Cref{thm:mainDelta}. \end{proof}

The $\Delta$-values $0$ and $1/2$ can be realized by the following two facts already mentioned in \cite[Part 3]{LogicBlog:16}.
\begin{prop} Let $A$ compute a Schnorr random $Y$. Then $\Delta(A) = 1/2$. \end{prop}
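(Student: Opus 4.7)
The plan is to show that for any Schnorr random $Y \leq_T A$ we have $\delta(Y) = 1/2$, which immediately gives $\Delta(A) \ge 1/2$, and then use the trivial upper bound $\Delta(A) \le 1/2$ to conclude.

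The key observation is that for any computable bit sequence $X$, the bitwise operation $Y \mapsto Y \leftrightarrow X$ is a computable homeomorphism of $2^\omega$ with a computable inverse (apply the same operation again). Since Schnorr randomness is preserved under computable measure-preserving transformations (in particular under XOR with a computable sequence), the bit sequence $Z = Y \leftrightarrow X$ is itself Schnorr random.

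Next I invoke the fact (essentially Schnorr's theorem, or equivalently the effective strong law of large numbers for Schnorr random sequences) that every Schnorr random bit sequence $Z$ has asymptotic density exactly $1/2$, i.e.\
\[ \lim_n \frac{|Z \cap [0,n)|}{n} = \tfrac12. \]
In particular $\underline\rho(Y \leftrightarrow X) = 1/2$. Since this holds for every computable $X$, we conclude $\delta(Y) = \inf_{X \text{ computable}} \underline\rho(Y \leftrightarrow X) = 1/2$.

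Because $Y \le_T A$, the definition of $\Delta$ yields $\Delta(A) \ge \delta(Y) = 1/2$. Combined with the trivial bound $\Delta(A) \le 1/2$ (noted just before the statement, since for \emph{any} sequence $Z$ and any computable $X$ we have $\underline\rho(Z \lra X) + \underline\rho(Z \lra \overline X) \le 1$, so one of the two is at most $1/2$), this gives $\Delta(A) = 1/2$. The only nontrivial input is the preservation of Schnorr randomness under XOR with a computable sequence together with the law of large numbers; both are standard, so I do not expect any obstacle here.
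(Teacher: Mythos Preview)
Your proof is correct and follows essentially the same approach as the paper: the paper's one-line argument simply asserts that $\uul\rho(Y\leftrightarrow X)=1/2$ for every computable $X$ when $Y$ is Schnorr random, and you have supplied the standard justification (closure of Schnorr randomness under XOR with a computable sequence plus the effective law of large numbers) together with the easy upper bound $\Delta(A)\le 1/2$.
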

\begin{proof} \mbox{} 

If $Y$ is Schnorr random, then   $\underline \rho(A \leftrightarrow X) = 1/2$ for every computable set $A$.
\end{proof}
\begin{prop} Suppose $A$ is 2-generic. Then $\Delta(A) = 0$. \end{prop}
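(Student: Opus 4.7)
The plan is to combine \Cref{thm:mainDelta} with the preceding corollary, which yields $\Delta(A)\in\{0,1/2\}$ for every oracle $A$.  Hence it suffices to rule out $\Delta(A)=1/2$; by the theorem and the Medvedev equivalence $\+ B(p)\equiv_S \mathrm{AED}(\tp{(\tp n)})$ for $p\in(0,1/2)$, this amounts to showing that $A$ does not compute any function in $\mathrm{AED}(\tp{(\tp n)})$, and in particular it suffices to show that $A$ computes no AED function at all (that is, no total function almost everywhere different from every total computable function).

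The first step is the now-standard fact that a $1$-generic---hence a $2$-generic---oracle $A$ does not compute any DNC function.  Given an index $e$, the recursion theorem produces an index $j$ such that the $\Sigma^0_1$ set $C_e=\{\sigma:\exists i\,[\Phi_i(i)\downarrow=\Phi_e^\sigma(i)\downarrow]\}$ obeys the following dichotomy above any string $\sigma$: either some $\tau\succeq\sigma$ lies in $C_e$ (so if $A\succeq\tau$ then $\Phi_e^A$ is not DNC), or else $\sigma$ forces $\Phi_e^A(j)\uparrow$.  For a 1-generic $A$ with $\Phi_e^A$ total the second alternative is ruled out at every $\sigma\prec A$, so $A$ meets $C_e$ and $\Phi_e^A$ is not DNC.

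To pass from ``does not compute DNC'' to ``does not compute AED'' I would invoke the Kjos-Hanssen--Merkle--Stephan result cited in the introduction: every non-high AED function is SDNR and hence computes a DNC function.  Combined with the fact that every 2-generic $A$ is of non-high Turing degree, any hypothetical AED $f\le_T A$ would be non-high, would therefore compute a DNC function, and hence $A$ would compute DNC, contradicting the previous step.

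The hard part will be verifying non-highness of a 2-generic $A$: being high is not a $\Sigma^0_2$ property of $A$ and so is not directly forced by 2-genericity, but using GL$_1$ ($A'\equiv_T A\oplus\ES'$ for 1-generics) one reduces the task to showing that 2-generic forcing prevents $A\oplus\ES'$ from computing $\ES''$, which is a short separate lemma.  A KMS-free alternative would be a more direct forcing argument: exploit 2-genericity and splittings in the $\Phi_e$-tree above prefixes of $A$, and use the bound $f<\tp{(\tp n)}$ together with pigeonhole over a bounded family of computable ``guesses'' to construct a computable function agreeing infinitely often with $f$, contradicting AED; this bypasses both KMS and non-highness but seems to require more care.
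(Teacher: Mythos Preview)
Your proposal is correct and follows essentially the same route as the paper: reduce via \Cref{thm:mainDelta} to showing that $A$ computes no function in $\mathrm{AED}$, and then use that a $2$-generic is neither high nor of DNC degree together with the Kjos-Hanssen--Merkle--Stephan characterisation (equivalently, the result from \cite{Brendle.Brooke.ea:14} that $\+ B(\neq^*)$ coincides with ``high or DNC''). The paper simply quotes ``$A$ is neither high nor d.n.c.'' as known facts and cites \cite{Brendle.Brooke.ea:14} for the implication to ``not $\mathrm{AED}$'', whereas you spell out sketches for both ingredients; your detour through the dichotomy $\Delta(A)\in\{0,1/2\}$ is harmless but unnecessary, since the equivalence $\+ B(p)\equiv_S \mathrm{AED}(\tp{(\tp n)})$ already gives $\Delta(A)\le p$ for every $p\in(0,1/2)$ directly.
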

\begin{proof}
$A$ is neither high nor d.n.c., so $A$ is not in $\+ B(\neq^*)$ as defined in \cite[Section 3.2]{Brendle.Brooke.ea:14}. Hence $A$ does not compute a function in $\mathrm{AED}$,  the mass problem from Subsection~\ref{ss:dualpair} where no computable bound is imposed on the    function. In particular $A$ is does not compute a function in $\mathrm{AED}( \tp{(\tp n)})$, hence $\Delta(A)=0$ by  the second equivalence 	 in  \Cref{thm:mainDelta}. 
\end{proof}
Cor.\ \ref{cor:gamdel} shows that there is no interesting spectrum of values for either $\Gamma$ or $\Delta$, somewhat defeating the orginal purpose of  finely measuring the  complexity of an oracle by comparing it to the computable sets. However, more interesting spectra might be obtained for reducibilities stronger than Turing. Harrison-Trainor~\cite{Harrison-Trainor:17} shows that for many-one reducibility, every value in $[0,1/2]$ is assumed in the case of $\Gamma$. For $\Delta$ this hasn't been studied.

\section{Analog of Theorem~\ref{thm:mainDelta} for cardinal characteristics} \label[section]{s:CC}

As before let  $R \subseteq  X \times Y$ be a relation between spaces $X,Y$; we also assume now that  $\forall x \; \exists y \;
[x R y]$ and $\forall y \; \exists x \; \neg [x R y]$. Let $S = \{  \langle y,x \rangle \in Y \times X \colon \neg xR y\}$.  

\begin{definition} \label[definition]{df: bd} One defines  pairs of  dual cardinal characteristics by 
\begin{eqnarray*} \frd(R) &=& \min\{|G|:G\subseteq Y \land \, \forall x \in X \,
\exists y \in  G   \, xR y\} \\
 \frb(R) = \frd (S) & =&  \min\{|F|:F\subseteq X \land \, \forall y\in Y
\exists x \in F   \, \neg  xR y\}.\end{eqnarray*}
\end{definition}


Note that, in comparison  to \Cref{df:DB}, the defining properties are negated. For a discussion of this, see the beginning of Section 3 of Brendle et al.\ \cite{Brendle.Brooke.ea:14}.

We obtain the characteristics discussed in the introduction as    $\frd(R)$ and $\frb(R)$ for  the two types of relations $R$ introduced in Def.\ ~\ref{df:relations}.  We summarise briefly:

For $  x \in {}^\omega\omega$ and 
  $  y \in \Pi_n \{0, \ldots, h(n)-1\}$,  let  
$x \neq^*_h  y \LR \fa^\infty n \,  [ x(n) \neq y(n)]$.

For $ 0 \le p \le 1/2$,  for $x,y \in {}^\omega 2$, let 
$x \bowtie_p y \LR   \underline \rho (x \lra y)  >p$.


\begin{remark}  It will be convenient for the reader  to  express the characteristics from  \Cref{df: bd} for these relations   in words, with some short notation.

\medskip

\n \fbox{$\frd(\neq^*,h)$} is the least size of a set $G$ of $h$-bounded functions so that for each function $x$ there is a function $y$ in   $G$  such that    $\fa^\infty n [ x(n) \neq y(n)]$.  (Of course it suffices to require this for $h$-bounded $x$. 
The systematic notation is $\frd(\neq^*_h)$.)
\medskip

\n  \fbox{$\frb(\neq^*,h)$} is the least size of a set   $F$ of  functions such that for each $h$-bounded function $y$, there is a function $x$ in $F$  such that $\ex^\infty n \, x(n) = y(n)$.  (We can require that each function in $F$ is $h$-bounded.
 The systematic notation is $\frb(\neq^*_h)$.) 
 \medskip
 
 \n \fbox{$\frd(p)$} is short for  $\frd(\bowtie_p) $, the least size of a set $G$ of bit sequences  such that for each bit sequence $x$ there is a bit sequence  $y$ in  $G$   so that $\uul \rho (x \lra y) >p$. 
 
\medskip

\n   \fbox{$\frb(p)$} is short for  $\frb(\bowtie_p) $, the least size of a set  $F$ of  bit sequences such that for each bit sequence $y$, there is a bit sequence $x$ in $F$  so that $\uul \rho (x \lra y) \le p$.  \end{remark}

Our main goal is to  show that  $\frd(p)= \frd(\neq^*, \tp{(\tp n)})$  and $\frb(p)= \frb(\neq^*, \tp{(\tp n)})$  for each  $p\in (0,1/2)$.   Of course the proofs are similar to the ones in Section~\ref{s:Nies_Delta}, except that the issue of uniformity disappears. See the above-mentioned   Greenberg et al.\ \cite[Thm.\ 6.15]{Greenberg.etal:18} for an exposition  of the results deriving them from a common core.
We begin with some preliminary facts of independent interest.
 The first lemma amplifies bounds  without changing the cardinal characteristics. 

\begin{lemma} \label[lemma]{lem:double}  \bi \item[(i)] Let $h$ be nondecreasing and $g(n) = h(2n)$. 

\n We have $\frd(\neq^*,h) = \frd(\neq^*,g)$ and $\frb(\neq^*,h) = \frb(\neq^*,g)$.

\item [(ii)] For each $a,b >1$ we have $\frd(\neq^*,{\tp{(a^n)})}= \frd(\neq^*,{\tp{(b^n)}})$ and  

\n $\frb(\neq^*,{\tp{(a^n)})}= \frb(\neq^*,{\tp{(b^n)}})$. \ei \end{lemma}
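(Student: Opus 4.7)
The plan is to mirror Lemma~\ref{lem:doubleR}, taking advantage of the fact that in the cardinal-characteristic setting we may freely take pairs of witnesses rather than be forced to choose one of two. Since $h$ is nondecreasing and $g(n)=h(2n)$, we have $h\leq g$ pointwise, so every $h$-bounded function is $g$-bounded. The easy halves of (i) are then immediate: an $h$-bounded witness for $\frd(\neq^*,h)$ also works as a $g$-bounded witness for $\frd(\neq^*,g)$, giving $\frd(\neq^*,g)\leq\frd(\neq^*,h)$; and any witness $F$ for $\frb(\neq^*,g)$ already meets every $h$-bounded $y$ (since such $y$ are $g$-bounded), giving $\frb(\neq^*,h)\leq\frb(\neq^*,g)$.

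For $\frd(\neq^*,h)\leq\frd(\neq^*,g)$, I would take a $g$-bounded witness $G$ of size $\kappa=\frd(\neq^*,g)$ and form $G'=\{\hat y_{y_0,y_1}:y_0,y_1\in G\}$, where $\hat y_{y_0,y_1}(2n+i)=y_i(n)$ for $i\in\{0,1\}$. Each $\hat y_{y_0,y_1}$ is $h$-bounded because $y_i(n)<g(n)=h(2n)\leq h(2n+i)$. Given an arbitrary $x$, the two decimations $x_i(n)=x(2n+i)$ are each dominated almost everywhere by some $y_i\in G$, and then by construction $\hat y_{y_0,y_1}(m)\neq x(m)$ at all sufficiently large $m$. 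Since $|G'|\leq\kappa^2=\kappa$ (once $\kappa$ is known to be infinite), this gives $\frd(\neq^*,h)\leq\kappa$.

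Dually, for $\frb(\neq^*,g)\leq\frb(\neq^*,h)$, I would take an $h$-bounded witness $F$ of size $\kappa=\frb(\neq^*,h)$ and set $F'=\{x^{(i)}:x\in F,\,i\in\{0,1\}\}$ with $x^{(i)}(n)=x(2n+i)$, so $|F'|\leq 2\kappa=\kappa$. For any $g$-bounded $y$, the duplication $\hat y(2n+i)=y(n)$ is $h$-bounded, so some $x\in F$ matches $\hat y$ infinitely often; a pigeonhole split of those matches into even and odd positions forces $x^{(0)}$ or $x^{(1)}$ to match $y$ infinitely often. Part (ii) then follows by iterating (i): choosing $i$ with $a^{2^i}>b$ yields $\tp{((a^{2^i})^n)}\geq\tp{(b^n)}$ eventually, and combining this with the easy monotonicity half of (i) gives one inequality, with the other following by symmetry in $a,b$.

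The main subtlety I expect is confirming that $\kappa$ is infinite, which is what makes $\kappa^2=\kappa=2\kappa$ legitimate in the two constructions above. This is elementary: a finite candidate $\{y_0,\ldots,y_{k-1}\}$ for $\frd(\neq^*,g)$ is defeated by partitioning $\omega$ into infinite sets $A_0,\ldots,A_{k-1}$ and putting $x(n)=y_i(n)$ for $n\in A_i$, so that $x$ agrees with each $y_i$ infinitely often; and a finite candidate $\{x_0,\ldots,x_{k-1}\}$ for $\frb(\neq^*,h)$ (in the unbounded cases relevant for (ii)) is defeated by choosing $y(n)$ outside $\{x_0(n),\ldots,x_{k-1}(n)\}$ whenever $h(n)>k$. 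With infiniteness secured, everything else is direct.
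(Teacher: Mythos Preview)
Your argument is correct and matches the paper's proof essentially line for line: the same interleaving $\hat y(2n+i)=y_i(n)$ for the $\frd$ direction, the same splitting $x^{(i)}(n)=x(2n+i)$ with a parity pigeonhole for the $\frb$ direction, and the same iteration for (ii). One wording slip: where you say the decimations $x_i$ are ``dominated almost everywhere by some $y_i\in G$'' you mean ``almost everywhere \emph{different from} some $y_i\in G$''; also, the paper simply asserts $G$ is infinite, whereas you supply the easy justification.
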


\begin{proof} (i)  Trivially, $h \le g$ implies that   $\frd(\neq^*,h) \ge \frd(\neq^*,g)$ and $\frb(\neq^*,h) \le \frb(\neq^*,g)$. So it suffices to show two inequalities.

\vsps

\n \fbox{$\frd(\neq^*,h) \le\frd(\neq^*,g)$:}  Let $G$ be a witness set for $\frd(\neq^*,g)$. Note that $G$ is also a witness set for $\frd(\neq^*,h(2n+1))$. Let $\hat G= \{p_0 \oplus p_1\colon \, p_0, p_1 \in G \}$, where $(p_0 \oplus p_1)(2m+i) = p_i(m)$ for $i= 0,1$. Each function in $\hat G$ is bounded by $h$.  Since $G$ is infinite, $|\hat G| = |G|$. Clearly $\hat G$ is a witness set for $\frd(\neq^*,h)$.

\vsps

\n \fbox{$\frb(\neq^*,h) \ge \frb(\neq^*,g)$:}  Let $F$ be a witness set for $\frb(\neq^*,h)$.  Let $\hat F$ consist of the functions of the form $ n \to p(2n)$,  or  of the form  $n \to p(2n+1)$,  where $p \in F$. Then $|\hat F| = |F|$, and each function in $\hat F$ is $g$ bounded. 

Clearly,  $\hat F$ is a witness set for $\frb(\neq^*,g)$: if $q$ is $g$-bounded, then  $\hat q$ is $h$ bounded where $\hat q(2n+i) = q(n)$ for $i=0,1$. Let $p\in F$ be such that $\ex^\infty k \, p(k) = \hat q(k)$. Let $i \le 1 $ be such that infinitely many such $k$ have parity~$i$. Then the function $n \to p(2n+i) $, which is in $\hat F$,  is as required.

(ii) is immediate from (i) by iteration. 
\end{proof}
%
%
 
 
\begin{lemma} Let $a \in \omega - \{0\}$. We have $\frd( \neq^*,{\tp{(a^n)}}) \le \frd(1/a)$ and 

\n $\frb( \neq^*,{\tp{(a^n)}}) \ge \frb(1/a)$.\end{lemma}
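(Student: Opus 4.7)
The plan is to mirror, in set theory, the computability-theoretic argument of the preceding lemma. Let $h(n) = \tp{(a^n)}$ and recall the consecutive intervals $I_m$ of length $a^m$, together with the block-decomposition operators $L_h \colon X_h \to 2^\omega$ and $K_h \colon 2^\omega \to X_h$ from \Cref{df:KhLh}. The combinatorial ingredient I will use is the following: if $u, v \in 2^\omega$ satisfy $\uul \rho(u \lra v) > 1/a$, then for all but finitely many $m$, $u$ and $v$ agree on at least one bit of $I_m$. Indeed, if they disagreed on all of $I_m$ for infinitely many $m$, then at the right endpoint of each such $I_m$ the proportion of agreement would be at most $(a^m - 1)/(a^{m+1} - 1)$; this quantity tends to $1/a$, forcing $\uul \rho(u \lra v) \le 1/a$, a contradiction.

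For $\frd(\neq^*, \tp{(a^n)}) \le \frd(1/a)$, I would take a witness $G$ for $\frd(1/a)$ and set $G' = \{K_h(1 - y) \colon y \in G\}$, so that $|G'| \le |G|$ and each element of $G'$ is $h$-bounded. Given any $h$-bounded $x$ (which suffices by the remark following the definition of $\frd(\neq^*, h)$), form $\bar x = L_h(x)$ and pick $y \in G$ with $\uul \rho(\bar x \lra y) > 1/a$. Observe that $K_h(1 - y)(m) = x(m)$ iff $1 - y$ coincides with $\bar x$ throughout $I_m$, i.e., iff $y$ and $\bar x$ disagree on every bit of $I_m$. By the combinatorial ingredient this happens only finitely often, so $K_h(1 - y)$ differs from $x$ almost everywhere, which is exactly the $\neq^*$ relation required.

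For the dual inequality $\frb(\neq^*, \tp{(a^n)}) \ge \frb(1/a)$, I would take a witness $F$ for $\frb(\neq^*, \tp{(a^n)})$, which may be assumed to consist of $h$-bounded functions, and set $F' = \{1 - L_h(x) \colon x \in F\} \subseteq 2^\omega$. Given any $y \in 2^\omega$, form $\bar y = K_h(y)$ and pick $x \in F$ with $x(n) = \bar y(n)$ for infinitely many $n$. For each such $n$, $L_h(x)$ coincides with $y$ throughout $I_n$, so $1 - L_h(x)$ disagrees with $y$ throughout $I_n$. The same density computation as in the combinatorial ingredient then yields $\uul \rho((1 - L_h(x)) \lra y) \le 1/a$, which is precisely the condition needed to witness $\frb(1/a)$, giving $|F'| \le |F|$.

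The only real content of the argument is the combinatorial bound on the agreement density on a fast-growing block decomposition; the rest is a uniform translation between $2^\omega$ and $X_h$ via $K_h$, $L_h$, and bitwise complementation. I expect no substantive obstacle beyond keeping the quantifier bookkeeping consistent with the definitions of $\frd(\neq^*, h)$ and $\frb(\neq^*, h)$, and checking that the strict inequality $\uul \rho > 1/a$ really does force agreement within all but finitely many $I_m$.
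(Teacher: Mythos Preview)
Your proposal is correct and follows essentially the same argument as the paper. The only cosmetic difference is in the $\frd$ direction: the paper takes $\widehat G = \{K_h(y) : y \in G\}$ and argues by contradiction, complementing the hypothetical bad $x$, whereas you take $G' = \{K_h(1-y) : y \in G\}$ and argue directly; the underlying density computation on the intervals $I_m$ is identical, and the $\frb$ direction matches the paper's proof almost verbatim.
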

\begin{proof} As in Section~3,  for $m \ge 2$  let $I_m$  be the $(m-1)$-th consecutive interval of length $a^m$ in $\omega-\{0\}$. First let $G$ be a witness set for $\frd(1/a)$. Let $h(n) = \tp{(a^n)}$.  Recall the operators $K_h$ and $L_h$ from \Cref{df:KhLh}. We show that  $\widehat G = \{ K_h(y) \colon \, y \in G \}$ is a witness set for $\frd( \neq^*,{\tp{(a^n)}})$.
 Otherwise there is a sequence $x\in {}^\omega 2$ such that for each $y \in  \widehat G$ there are infinitely many $m$ with  $x(m) = K_h(y)(m)$. Let $x' =1-x$, that is   $0$s and $1$s are interchanged. Then for each $y \in G$, for  infinitely many $m$, $L_h(x') (i) \neq y(i)$ for each $i \in I_m$.  If we let $n = 1+ \max I_m$, the proportion  of $i < n$ such that $ L_h(x) (i) = y'(i)$ is therefore at most $(a^m-1) / (a^{m+1}-1)$, which converges to $1/a$ as $m \to \infty$. This contradicts the choice of $G$.

Now let $F$ be a witness set for $\frb(\neq^*, h)$. Let $\hat F = \{ 1- L_h(x) \colon \, x \in F \}$. For each  $y  \in \cantor$ there is $x\in F$ such that $\ex^\infty n  \,  K_h(y)(n) = x(n)$. This implies $\uul \rho (y \lra   x') \le 1/a  $ where $  x' =  1- L_h(x) \in \hat F$. Hence $\hat F$ is a witness set for $\frb(1/a)$.
\end{proof}

\begin{theorem} \label[theorem]{th:sht} Fix any  $p\in (0,1/2)$. We have  \bc $\frd(p)= \frd(\neq^*, \tp{(\tp n)})$ and 
$\frb(p)= \frb(\neq^*, \tp{(\tp n)})$. \ec \end{theorem}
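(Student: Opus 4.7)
The plan is to replicate the proof of \Cref{thm:mainDelta} at the cardinal level. One direction of each equality is immediate from the preceding lemma and \Cref{lem:double}(ii): for any integer $a \ge 1/p$ we have $\frd(\neq^*, \tp{(\tp n)}) = \frd(\neq^*, \tp{(a^n)}) \le \frd(1/a) \le \frd(p)$, and dually $\frb(\neq^*, \tp{(\tp n)}) \ge \frb(p)$.

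For the reverse inequalities, I would formulate and prove cardinal analogs of \Cref{cl:AR}, \Cref{cl:BR}, \Cref{lem:doubleR2}, and \Cref{cl:finalR}, and chain them. The guiding principle is that each Medvedev reduction in the proof of \Cref{thm:mainDelta} is produced by an explicit pair of transformations $(\phi,\psi)$ satisfying $\phi(x)R'y'\Rightarrow xR\psi(y')$, and applying $\psi$ to a $\frd(R')$-witness family yields a $\frd(R)$-witness family of the same cardinality, hence $\frd(R)\le\frd(R')$; dually, a reduction at the $\+D$ level yields $\frb(R)\le\frb(R')$. Non-uniform Muchnik reductions translate via multi-valued constructions that enlarge the witness family by a constant factor, which is absorbed at infinite cardinals. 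The cardinal analogs of \Cref{cl:AR}, \Cref{cl:BR}, and \Cref{lem:doubleR2} reuse the same combinatorial ingredients (the density ``Fact'', the list-decoding theorem, and the splitting construction) and produce, with $q=p+2/c$, $\hat h(n)=\lfloor\tp{n/k}\rfloor$, and $u,L$ from list decoding,
\[ \frd(p)\le\frd(\neq^*,\hat h,q)\le\frd(\not\ni^*,u,L)=\frd(\not\ni^*,\tp{(L 2^n)},L), \]
together with the reverse chain for $\frb$.

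The most delicate step is the cardinal analog of \Cref{cl:finalR}. For the $\frb$-direction one uses a multi-valued reduction: starting from a $\frb(\not\ni^*,\tp{(L 2^n)},L)$-witness family $F$ of slaloms, form $\widehat F=\{y_{s,i}:s\in F,\,i<L\}$ with $y_{s,i}(n)$ the $i$-th block of the $i$-th element of $s(n)$; for any target $y'$ bounded by $\tp{(\tp n)}$, the witness property applied to the $L$-fold concatenation $\tilde y(n):=y'(n)^L$ yields $s\in F$ with $\tilde y(n)\in s(n)$ for infinitely many $n$, and pigeonhole on the index of $\tilde y(n)$ within $s(n)$ selects a single $i<L$ with $y_{s,i}(n)=y'(n)$ infinitely often. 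The $\frd$-direction, passing from $\frd(\not\ni^*,\tp{(L 2^n)},L)$ to $\frd(\neq^*,\tp{(\tp n)})$, is the subtler one: the original Medvedev reduction relies on the strong simultaneous-diagonalization property of $\mathrm{AED}$ functions, which has no direct counterpart at the cardinal level of $\frd(\neq^*)$. I would bridge the gap via the standard ZFC equality $\frd(\not\ni^*,f,L)=\frd(\neq^*,f)$ for constant $L$ and suitably unbounded $f$, underlying Bartosz\'ynski's characterization of $\non(\mathcal{M})$, applied with $f=\tp{(\tp n)}$. Chaining all the steps then delivers both reverse inequalities and hence the theorem.
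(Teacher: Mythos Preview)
Your overall plan matches the paper's proof: the easy direction uses the two preceding lemmas exactly as you say, and the hard direction proceeds via cardinal analogs of Claims~\ref{cl:AR}--\ref{cl:finalR}, which the paper states and proves directly as Claims~\ref{cl:1}--\ref{cl:final}. Your treatment of the $\frb$-side of the final step is also the paper's.

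The one place you diverge is the $\frd$-side of the final step, where you call the argument ``subtler'' and appeal to an external equality attributed to Bartoszy\'nski. In fact the paper handles this directly and elementarily, and your concern is misplaced. Given a witness set $G$ for $\frd(\neq^*, \tp{(\tp n)})$, let $\hat G$ consist of all concatenations $(y_1,\ldots,y_L)$ with each $y_i\in G$, where the $i$-th block of $(y_1,\ldots,y_L)(n)$ is $y_i(n)$; since $G$ is infinite, $|\hat G|=|G|^L=|G|$. For any $L$-slalom $s$ bounded by $\tp{(L\tp n)}$, extract the functions $x_i(n)=$ ``the $i$-th block of the $i$-th element of $s(n)$'', choose for each $i$ some $y_i\in G$ eventually different from $x_i$, and observe that $(y_1,\ldots,y_L)\in\hat G$ eventually escapes $s$. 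The ``simultaneous diagonalization'' you worried about is replaced by taking the $L$-fold product of $G$; this is precisely the cardinal-level counterpart of the non-uniform try-$L$-candidates Muchnik reduction, in the spirit of your own remark that ``multi-valued constructions enlarge the witness family by a constant factor, which is absorbed at infinite cardinals''. It is also essentially the argument underlying the equality you wanted to quote. So your proposal is correct, but with an unnecessary detour at the last step.
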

\begin{proof} By the two foregoing lemmas we have $  \frd(p)  \ge \frd(\neq^*, \tp{(\tp n)}) $ and $\frb(p)\le  \frb(\neq^*, \tp{(\tp n)})$. It remains to show the converse inequalities:

\n   $\frd(p) \le  \frd(\neq^*, \tp{(\tp n)})$ and $\frb(p)\ge  \frb(\neq^*, \tp{(\tp n)})$.

Recall from Definitions \ref{df: A} and \ref{df: B} that for strings $x,y$ of length $r$,  \[ d(x,y) = \frac 1 r |\{ i \colon x(n) (i)    \neq y(n)(i) \}| \]   If  $h$ is  a function of the form $2^{\hat h}$ with $\hat h \colon \, \omega \to \omega$,    $X=Y= X_h$ denotes  the space of $h$-bounded functions. For   $q \in (0, 1/2)$, 
 we   defined a relation on $X \times Y$ by 
    \[x \neq^*_{\hat h,q} y \LR \fa^\infty n  \, [ d(x(n), y(n)) \ge q].   \]  For ease of notation we continue to denote this relation by $\la \neq^*, \hat h, q \ra$.   
    \begin{claim} \label[claim]{cl:1} For each $c\in \omega$ there is $k \in \omega$ such that
     \begin{eqnarray*}  \frd(q- \frac 2c) &\le & \frd \la \neq^*, \lfloor \tp{n/k} \rfloor, q\ra, \text{ and } \\ 
 \frb(q- \frac 2c) &\ge & \frb \la \neq^*, \lfloor \tp{n/k} \rfloor, q\ra. \end{eqnarray*} \end{claim}
\begin{proof}   \NumberQED{cl:1} As in the proof of \Cref{cl:AR}, let $k $ be large enough so that $\tp{1/k}-1 < \frac 1 {2c}$. Let  $\hat h(n) = \lfloor \tp{n/k} \rfloor$ and  $h = \tp{\hat h}$. Write $H(n) = \sum_{r\le  n} \hat h(r)$.  We refer to the bits with position in $[H(n), H(n+1)) $ as \emph{Block}~$n$. Recall from the proof of \Cref{cl:AR}  that for sufficiently large  $n$ \[  \hat h(n+1) \le \frac 1 c H(n). \]  

For the   inequality involving $\frd$, let $G$ be a witness set for $\frd \la \neq^*, \hat h , q\ra$.  Thus, for each function $x < h$ there is a function $y \in G$ such that for almost all $n$,  $L_h(x), L_h(y)$ disagree on a proportion of $q$ bits of Block $n$. Let $z$ be the complement of $L_h(y)$.   Given $m$, let $n$ be such that $H(n) \le m < H(n+1)$. Since $m- H(n) \le \frac 1 c H(n)$, for large enough $m$, $L_{  h}(x) $ and $z$ agree up to $m$ on a proportion of at least $q - 1.5/c$ bits. So  the set of complements of the $L_h(y)$, $y \in G$, forms a witness set for $\frd(q- 2/c)$ as required.

For the   inequality involving $\frb$, let $F$ be a witness set for  $\frb(q-2/c)$. Thus, for each $y \in \cantor$ there is $x \in F$ such that $\uul \rho(y \lra x) \le q-2/c$. Let $\hat F = \{K_h(1 -x) \colon \, x \in F \}$. We show that $\hat F$ is a witness set for $\frb \la \neq^*, \lfloor \tp{n/k} \rfloor, q\ra$. 

Give a function $y < h$,  let $y' = L_h(y)$.  There is $x \in F$ such that  $\uul {\rho} (y' \lra x) \le q - 2/c$, and hence $\ol  \rho (y' \lra  x' ) \ge 1- q+ 2/c$ where $x' = 1 -x$ is the complement and $\ol  \rho$ denotes the upper density.  Then there are infinitely many $m$ such that the strings $y'\uhr m$ and $x'\uhr m$ agree on a proportion of $> 1- q+1/c$ bits.
Suppose that  $H(n) \le m < H(n+1)$,  then the contribution  of disagreement of   Block  $n$ is at most $1/c$. So 
  there are infinitely many $k$ so that in Block $k$, $y'$ and $x'$ agree on  a proportion of more than $1-q$ bits, and hence disagree on a proportion of fewer than $q$ bits.  
\end{proof}
 
In the following recall \Cref{df: trace}, and in particular that for  $L \in \omega$ and a function~$u$, for any $L$-slalom $s$ and function $y < u$,  
 \[s \not \ni^*_{u,L} y \LR \fa^\infty n [s(n) \not \ni y(n)]. \] 
  We     also write $\la \not \ni^*, u ,L \ra $ for this  relation.

\begin{claim} \label[claim]{cl:2} Given $q < 1/2$, let $L, \epsilon $ be as in \Cref{lem:ListDec}. Fix  a nondecreasing  function $\hat h$, and   let  $u(n)= \tp{\lfloor \epsilon \hat h(n) \rfloor }$. We have 
\[ \frd \la \neq^*,  \hat h , q\ra \le \frd \la \not \ni^*, u,L \ra \text{ and } \frb \la \neq^*,  \hat h , q\ra \ge \frb \la \not \ni^*, u,L \ra. \]
  \end{claim}
\begin{proof} \NumberQED{cl:2} For  the   inequality involving $\frd$, let $G$ be a set of functions bounded by $u$ such that $|G| < \frd \la \neq^*,  \hat h , q\ra$. We show that $G$ is not a witness set  for  the right hand side $\frd \la \not \ni^*, u,L \ra$. 

For each $r$ of the form $\hat h(n)$ choose a set $C= C_r$ as in \Cref{lem:ListDec}. Since $|C_r| = \tp{\lfloor \epsilon r \rfloor}$  we may choose a sequence $\seq {\sss^r_i}_{i<  \tp{\lfloor \epsilon r \rfloor}}$ listing $C_r$ without repetitions. For a  function $y <  u$ let $\wt y$ be the function given by $\wt y(n)= \sss^{\hat h(n)}_{y(n)}$. (Thus,  $\wt y(n)$ is a binary string of length $\hat h(n)$.) Let $\wt G = \{ \wt y  \colon \, y \in G \}$. Then  $|\wt G| \le  |G| < \frd \la \neq^*,  \hat h , q\ra $. So there is a function $x$ with $x(n) \in {}^{\hat h(n)}2$ for each $n$ such that for each $\wt y \in \wt G$ we have  $\ex^\infty n \, [d(x(n), \wt  y(n)) < q]$. Let $s$ be the slalom given by 
\[ s(n) = \{ i \colon \, d (x(n), \sss^{\hat h(n)}_i )< q  \}.\]
Note that by the choice of the $C_r$ according to \Cref{lem:ListDec} and since the listing of $C_r$ has no repetitions,  $s$ is an $L$-slalom. By definition, $\max s(n) < u(n)$. So, for each $y \in G$ we have $\ex^\infty n \, [s(n) \ni y(n)]$. Hence $G$ is not a witness set for $\frd \la \not \ni^*, u,L \ra$.

For  the   inequality involving $\frb$,  suppose $F$ is a witness set  for $\frb \la \neq^*,  \hat h , q\ra$. That is, for each $h= \tp{\hat h}$-bounded function $y$, there is $x\in F$ such that 
\bc $\ex^\infty n \, [ d(x(n), y(n)) <q]$ \ec (as usual we view $x(n), y(n)$ as  binary strings of length $\hat h(n)$).
For $x\in F$ let $s_x$ be the  $L$-slalom such that
 \bc  $s_x(n) = \{i < u(n) \colon \, d(\sss^{\hat h(n)}_i, x(n) )< q\}$. \ec
 Let $\widehat F = \{s_x \colon \, x \in F\}$. Given an $u$-bounded function $y$, let $\wt y(n) =  \sss^{\hat h(n)}_{y(n)}$. There is $x \in F$ such that      $d(x(n), \wt y(n)) < q$ for     infinitely many $n$. This means that $y(n) \in s_x(n)$. Hence $\widehat F$ is a witness set for 
$\frb \la \not \ni^*, u,L \ra$. \end{proof}


We next need an amplification tool   in the context of slaloms. As before, the proof is like  the one of  \Cref{lem:double}(i), so we omit it.

\begin{claim} \label[claim]{lem:double2}     Let $ L \in \omega$, let the function $u$ be nondecreasing and let $w(n) = u(2n)$. We have $\frd(\la \not \ni^*,u, L \ra = \frd\la \not \ni^*,w,L\ra$ and $\frb(\la \not \ni^*,u, L \ra = \frb\la \not \ni^*,w,L\ra$.
 \end{claim}
Iterating the claim,  starting with the function $\hat h(n) = \lfloor \tp{n/k}\rfloor$ with $k$ as in \Cref{cl:1},  we   obtain that $\frd \la \not \ni^*,\tp{\hat h}, L \ra=  \frd \la \not \ni^*,\tp{(L 2^n)}, L \ra$, and similarly

\n $\frb \la \not \ni^*,\tp{\hat h}, L \ra =  \frb \la \not \ni^*,\tp{(L 2^n)}, L \ra$. 
It remains to verify the following.
\begin{claim} \label[claim]{cl:final} \
 \bc $\frd \la \not \ni^*,\tp{(L 2^n)}, L \ra \le \frd(\neq^*, \tp{(\tp n)})$ and 
   $\frb \la \not \ni^*,\tp{(L 2^n)}, L \ra \ge \frb(\neq^*, \tp{(\tp n)})$. \ec \end{claim}
 \begin{proof}
\NumberQED{cl:final}
Given  $n$, recall from Section~3 that we write  a number $k< \tp{(L \tp n)}$ in binary with leading zeros if necessary, and so can view $k$ as a binary string of length $L \tp n$. 

For the inequality involving $\frd$,
let $G$ be a witness set for   $\frd(\neq^*, \tp{(\tp n)})$.  For   functions $y_1, \ldots , y_L$  such that  $y_i(n) < \tp{( \tp n)}$ for each $n$, let $(y_1, \ldots, y_L)$ denote the function $y$ with $y(n) <  \tp{(L \tp n)} $ for each $n$  such that  the $i$-th block of $y(n)$ equals $y_i(n)$ for  each   $i$ with  $1 \le i \le L$.  Let 
\bc $\hat G = \{ (y_1, \ldots, y_n) \colon \, y_1, \ldots, y_L \in G\}$. \ec
Since  $G $ is infinite we have $|\hat G| = |G|$. We check that  $\hat G$ is a witness set for  the left hand side $\frd \la \not \ni^*,\tp{(L 2^n)}\ra$. Given an $L$-slalom $s$ bounded by $\tp{(L \tp n)} $ we may assume that $s(n) $ has exactly $L$ members, and they  are binary  strings of length $L \tp n$. For $i \le L$ let $x_i(n) $ be the $i$-th block of the $i$-th string in $s(n) $,  so that $|x_i(n)|  =  \tp n$. Viewing  the $x_i$ as  functions  bounded by $\tp{(\tp n)}$,  we can choose $y_1, \ldots, y_L \in G$ such that $\fa^\infty n \, x_i(n) \neq y_i(n)$. Let $y = (y_1, \ldots, y_n) \in \hat G$. Then $\fa^\infty n \, [s(n) \not \ni  y(n)]$,  as required. 

For the inequality involving $\frb$,  let $F$ be a witness set for $\frb \la \not \ni^*,\tp{(L 2^n)}$. That is,  $F$ is a set of $L$-slaloms $s$ such that for each function $y$ with $y(n) < 2^{(L 2^n)}$, there is $s\in F$ such that $s(n) \ni y(n)$ for infinitely many $n$.  

Let $\hat F$ be the set of functions $s_i$, for $s\in F$ and  $i<L$, such that $s_i(n) $ is the $i$-th block of the $i$-th element of $s(n)$ (as before we may assume that each string in $s(n)$ has length $L2^n$). Now let $y$ be a given function bounded by $2^{(2^n)}$. Let $y'$ be the function bounded by $2^{(L2^n)}$ such that for each $n$, each block of $y'(n)$ equals $y(n)$. There is $s\in F$ such that $s(n) \ni y'(n)$ for infinitely many $n$. There is $i<L$ such that  $y'(n)$ is the $i$-th string in $s(n)$ for infinitely many of these $n$, and hence  $y(n) = s_i(n)$. Thus  $\hat F$ is a witness set for $\frb(\neq^*, \tp{(\tp n)})$.
\end{proof}

We can now summarise the argument  that  $\frd(p) \le  \frd(\neq^*, \tp{(\tp n)})$ for $p < 1/2$. Pick $c$ large enough such that $q= p+2/c < 1/2$.  
By \Cref{cl:1} there is $k$ such that   $\frd(p) \le \frd \la \neq^*, \lfloor \tp{n/k} \rfloor, q\ra$.   
 By \Cref{cl:2} there are $L$, $\epsilon$ such that where $\hat h(n) =  \lfloor \tp{n/k} \rfloor$, we have    $\frd \la \neq^*,  \hat h , q\ra \le \frd \la \not \ni^*, u,L \ra$,     
where $u(n)= \tp{\lfloor \epsilon \hat h(n) \rfloor } $.
Applying \Cref{lem:double2}  sufficiently many times we  have   $\frd \la \not \ni^*, u,L \ra \le \frd \la \not \ni^*, \tp{(L 2^n)} ,L \ra$.    
Finally, $\frd \la \not \ni^*,\tp{(L 2^n)}, L \ra \le \frd(\neq^*, \tp{(\tp n)})$ by \Cref{cl:final}.

 The argument for $\frb(p)\ge  \frb(\neq^*, \tp{(\tp n)})$, $p < 1/2$,  is  dual to the above. 
\end{proof}

\section{A proper hierarchy of problems  $\mathrm{IOE}(h)$ in the weak degrees}\label[section]{s:fcuk}Recall that by $\mathrm{IOE}(h)$ we denote  the mass problem  of functions  $f$  such that $\ex^\infty n \, [f(n) = r(n) ]$ for each computable function $r< h$.
In this section we study   how the  Muchnik degree of    $\mathrm{IOE}(h)$ depends on the function~$h$. In \cite{Monin.Nies:15} the authors obtained the following two results:
\begin{thm}[\cite{Monin.Nies:15}, Thm.\ IV.1]
Let $c \geq 2$ be any integer, which we view as a constant function. 
\bc $\mathrm{IOE}(  2) \equiv_S \mathrm{IOE}( c) \equiv_S  \{X \colon \, X \text{ is not computable}\}$ \ec
\end{thm}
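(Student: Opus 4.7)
My strategy is to dispose of the elementary reductions and then focus on the one genuinely nontrivial direction. Write $\+ N := \{X : X \text{ is not computable}\}$, viewed as the mass problem of non-computable bit sequences. The theorem is a chain of equivalences that I would decompose into five Medvedev reductions: (i) $\mathrm{IOE}(2) \leq_S \+ N$, (ii) $\+ N \leq_S \mathrm{IOE}(2)$, (iii) $\mathrm{IOE}(2) \leq_S \mathrm{IOE}(c)$, (iv) $\+ N \leq_S \mathrm{IOE}(c)$, and (v) the main reduction $\mathrm{IOE}(c) \leq_S \+ N$.

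Reductions (i)--(iv) can be dispatched by simple arguments. For (i) the identity map works: if a non-computable bit sequence $A$ were not in $\mathrm{IOE}(2)$, then for some computable bit sequence $r$ we would have $A \neq r$ almost everywhere, giving $A = 1-r$ almost everywhere and forcing $A$ to be computable, a contradiction. For (ii) I would apply truncation: given $f \in \mathrm{IOE}(2)$, set $\hat f(n) = f(n)$ if $f(n) \leq 1$ and $\hat f(n) = 0$ otherwise; every match between $f$ and a computable bit sequence $r$ occurs at a position with $f(n) \in \{0,1\}$, so $\hat f$ inherits those matches and lies in $\mathrm{IOE}(2)$, making $\hat f$ a non-computable bit sequence uniformly computable from $f$. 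For (iii) the identity works again because $r < 2$ implies $r < c$, hence $\mathrm{IOE}(c) \subseteq \mathrm{IOE}(2)$. Reduction (iv) combines (iii) with the truncation of (ii).

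The heart of the proof is reduction (v): from an arbitrary non-computable bit sequence $A$ I must uniformly produce $g \leq_T A$ with $g \in \mathrm{IOE}(c)$. My proposed construction enumerates the partial computable functions $(\phi_e)_{e \in \omega}$ and partitions $\omega$ into infinite computable sets $V_e = \{\langle e, m\rangle : m \in \omega\}$. On $V_e$, at position $n = \langle e, m\rangle$ one attempts to compute $\phi_e(n)$ within a budget of $m$ steps; when convergence occurs with value $< c$ the plan is to set $g(n) = \phi_e(n)$, and otherwise to fall back on a value derived from $A(n)$ (for instance $g(n) = A(n) \bmod c$, possibly with a computable shift). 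For each total computable $r < c$ one picks an index $e^*$ with $\phi_{e^*} = r$ and argues that on cofinitely many $n \in V_{e^*}$ the computation converges in time, so that $g$ agrees with $r$ infinitely often. The fallback must genuinely involve $A$, because a purely computable $g$ is always diagonalized against by $r(n) = (g(n)+1) \bmod c$.

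The principal obstacle is calibrating the time budget together with the fallback so as to preserve uniformity in $A$: the same Turing functional must succeed for every non-computable $A$, while simultaneously catching on infinitely many positions the computation of every total computable $r < c$, no matter how slow. I expect the resolution (the substantive content of \cite{monin2015}) to exploit the padding lemma---each computable $r < c$ has infinitely many indices $e$, so one can hope to find indices where $\phi_e$ is cheap enough on inputs of the form $\langle e, m\rangle$---together with a careful scheme for letting $A$'s bits determine the fallback so that the resulting $g$ is both genuinely non-computable and protected from any global computable diagonalizer.
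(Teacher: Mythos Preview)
Your decomposition into reductions (i)--(iv) is correct and cleanly argued. You also correctly identify (v) as the substantive direction; indeed the paper states that the difficult part is $\mathrm{IOE}(2) \geq_S \mathrm{IOE}(c)$, which after your (i)--(ii) is the same thing as your (v).

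However, your proposed construction for (v) does not work, and the suggested repairs do not close the gap. The set
\[
S = \{ n = \langle e,m\rangle : \phi_e(n)\!\downarrow \text{ in } \leq m \text{ steps with value } < c \}
\]
is computable, and on $S$ your $g$ is computably determined. For $c \geq 3$ define the computable function $r < c$ by: on $n \in S$ let $r(n)$ be any value in $\{0,\ldots,c-1\}$ different from $\phi_e(n)$; on $n \notin S$ let $r(n) = 2$. Then $r(n) \neq g(n)$ for every $n$, since on $S$ you have $g(n) = \phi_e(n) \neq r(n)$, while off $S$ the fallback $g(n) = A(n) \in \{0,1\}$ is never $2$. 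Thus $g \notin \mathrm{IOE}(c)$, regardless of $A$. Replacing the fallback by blocks of $A$ (so that it ranges over $\{0,\ldots,c-1\}$) does not help: the adversary then ignores $S$ as before, and on the complement you are asking precisely that the block version of $A$ lie in $\mathrm{IOE}(c)$ restricted to that computable set---which is isomorphic to the original problem, so the argument is circular. The padding lemma gives more indices for $r$ but does not make any of them fast enough to fall into $S$ on cofinitely many inputs; nothing prevents the adversarial $r$ above from being slow at every one of its indices.

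The actual argument, as the paper indicates, goes through error-correcting codes rather than enumeration with time budgets. One uses a family of binary codewords with large pairwise Hamming distance to transfer the infinitely-often-equal property from the $2$-valued setting to the $c$-valued setting: this is combinatorial rather than recursion-theoretic, and it is what makes the uniformity go through. Your enumeration idea is natural to try first, but it cannot avoid the diagonalization above; the coding approach is genuinely needed.
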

The difficult part of the theorem is to show that $\mathrm{IOE}(  2) \geq_S \mathrm{IOE}(  c)$ for $c > 2$. This can be done using error-correcting codes.

\begin{thm}[\cite{Monin.Nies:15}, Section 4]
For any pair of order functions $F < G$ such that 
 $\sum_n 1/F(n) = \infty$ and $\sum_n 1/G(n) < \infty$, we have $\mathrm{IOE}(F) <_W \mathrm{IOE}(G)$.  
\end{thm}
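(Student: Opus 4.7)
The plan is to handle the two directions separately. For the easy direction $\mathrm{IOE}(F) \le_W \mathrm{IOE}(G)$: since $F < G$ pointwise, every computable $r < F$ is also $< G$, so $\mathrm{IOE}(G) \subseteq \mathrm{IOE}(F)$, and the identity Turing functional yields a Medvedev (hence Muchnik) reduction.

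For the strict separation $\mathrm{IOE}(G) \not\le_W \mathrm{IOE}(F)$, I would produce a function $f \in \mathrm{IOE}(F)$ computing no element of $\mathrm{IOE}(G)$ via a Borel--Cantelli / measure argument. Let $\mu$ be the uniform product measure on $\prod_n [0,F(n))$. For each computable $r < F$, the events $\{f(n) = r(n)\}$ are independent across $n$ with probability $1/F(n)$, so the second Borel--Cantelli lemma together with $\sum_n 1/F(n) = \infty$ give $\mu\{f : f(n) = r(n) \text{ i.o.}\} = 1$; intersecting over the countably many such $r$ yields $\mu(\mathrm{IOE}(F)) = 1$.

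The heart of the argument is, for each Turing functional $\Phi_e$, to construct a \emph{computable} $\rho_e < G$ such that $\mu\{f : \Phi_e^f(n) = \rho_e(n) \text{ for infinitely many } n\} = 0$. Since $\{f : \Phi_e^f \in \mathrm{IOE}(G)\}$ is contained in that set, the countable union over $e$ is $\mu$-null, and intersecting with the conull $\mathrm{IOE}(F)$ leaves the desired $f$. The pigeonhole supporting this is that, writing $p_{e,n,v} := \mu\{f : \Phi_e^f(n) = v\}$, we have $\sum_{v < G(n)} p_{e,n,v} \le 1$, so strictly fewer than $G(n)/2$ values $v$ can satisfy $p_{e,n,v} > 2/G(n)$. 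Choosing $\rho_e(n)$ to avoid such $v$ gives $\sum_n p_{e,n,\rho_e(n)} \le 2\sum_n 1/G(n) < \infty$, and the first Borel--Cantelli lemma delivers the null set.

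The main obstacle is the effectiveness of $\rho_e$. The cylinders $\{f : \Phi_e^f(n) = v\}$ are $\Sigma^0_1$, so each $p_{e,n,v}$ is only left-c.e., and the naive greedy rule ``$\rho_e(n) := \arg\min_v p_{e,n,v}$'' produces only a $\Delta^0_2$ function. The factor-of-two slack in the pigeonhole is what rescues computability: by dovetailing the lower approximations of the $p_{e,n,v}$ and flagging those values whose approximation exceeds $2/G(n)$, one outputs a least unflagged $v$ within a computable bound on the search, handling ``silent bad'' outputs by a slightly weakened threshold $c/G(n)$ so that the summability survives. The interplay $\sum 1/F = \infty$ (which makes $\mathrm{IOE}(F)$ a full-measure class via the divergent Borel--Cantelli) versus $\sum 1/G < \infty$ (which makes each diagonalizing $\rho_e$ summable via the convergent one) is what drives the strict separation.
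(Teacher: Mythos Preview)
The paper does not prove this theorem; it only cites it from \cite{monin2015} as background before proving the stronger Theorem~\ref{th-ioe}. So there is no in-paper proof to compare against, and I will comment on your proposal directly.

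The easy direction and the divergent Borel--Cantelli step (giving $\mu(\mathrm{IOE}(F))=1$) are correct. The gap is exactly where you say it is, and your sketch of a fix does not close it. Committing at a computable stage to the least $v$ whose current approximation is $\le 2/G(n)$ produces a computable $\rho_e$, but gives no bound on the true value $p_{e,n,\rho_e(n)}$: all of the relevant measure may show up after the commit stage. Concretely, whatever computable stage bound $s(n)$ you fix, take $e$ so that $\Phi_e^f(n)=f(0)$ via a computation that runs for more than $s(n)$ steps. Then at commit time every $v$ is unflagged, you output $\rho_e(n)=0$, and $p_{e,n,0}=1/F(0)$ for every $n$; hence $\sum_n p_{e,n,\rho_e(n)}=\infty$ and the convergent Borel--Cantelli step fails. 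Replacing the threshold $2/G(n)$ by some $c/G(n)$ does nothing about late-appearing measure. (For this particular $e$ the desired conclusion $\Phi_e^f\notin\mathrm{IOE}(G)$ holds anyway, since $\Phi_e^f$ is computable; the point is that your \emph{argument} breaks, and the same late-convergence trick can be grafted onto less trivial functionals.)

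So the measure-theoretic skeleton is right, but a threshold tweak is not enough to make $\rho_e$ computable. One repair is to build $\rho_e$ together with a Solovay-style test that absorbs the late measure; another is to abandon the pure existence-by-measure argument and construct $f$ directly by forcing with computable subtrees of ${}^{<\omega}F$, which is the route the paper takes for the more general Theorem~\ref{th-ioe}.
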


We now show that given any order  function $F$, one can find a function $G > F$ such that: 
\bc $\mathrm{IOE}(F) <_W \mathrm{IOE}(G)$ \ec

Given an order function $F$, we let $w_F(n)$ be the number of possible combinations of $n$ first values for functions $f \leq F$, that is, \bc $w_F(n) = \Pi_{0 \leq i \leq n} F(i)$. \ec
To improve  the  readability  of expressions with iterated exponentiation, we will mostly write    $  \exp(x)$  for  $2^x$.

\begin{theorem}[with Joseph Miller] \label[theorem]{th-ioe}
Let $F \in {}^\omega \omega$ be an order function. Let $G \in {}^\omega \omega$ be an order function with $G(n) \geq 2$ for every $n$ and such that:
$$\forall k\ \forall^\infty n\ \exp(w_{F}(\exp(n \cdot k)) )< G(n)$$
There exists a function $f \in \mathrm{IOE}(F)$ and such that $g \notin \mathrm{IOE}(G)$ for every $g \leq_T f$.
\end{theorem}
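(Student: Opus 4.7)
The plan is to construct $f$ as an $F$-bounded function (so $f \in [T_F]$, where $T_F := \prod_i \{0,\dots,F(i)-1\}$) by a finite-extension forcing, simultaneously satisfying \emph{meeting requirements} that place $f$ in $\mathrm{IOE}(F)$ and \emph{use-control requirements} which force, for each $e$, either $\Phi_e^f$ to be partial or else the use of $\Phi_e^f(n)$ to be bounded by $\exp(n k_e)$ for almost every $n$, where $k_e$ is chosen below. Uniformly in $e$ I will define a computable $G$-bounded diagonal function $r_e$ such that, once the use is controlled, $\Phi_e^f \neq r_e$ almost everywhere; combined with the trivial case that a partial $\Phi_e^f$ is not in $\mathrm{IOE}(G)$, this yields $\Phi_e^f \notin \mathrm{IOE}(G)$ for every $e$.

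For each $e$ the hypothesis on $G$ lets me pick $k_e \in \omega$ with $G(n) > \exp(w_F(\exp(n k_e)))$ for almost every $n$. Define
\[
V_e(n) = \{\,\Phi_e^\sigma(n) : \sigma \in T_F,\ |\sigma| \le \exp(n k_e),\ \Phi_e^\sigma(n)\!\downarrow\text{ in } \le |\sigma|\text{ steps}\,\}
\]
and set $r_e(n) := \min(\omega \setminus V_e(n))$. Because $|V_e(n)| \le w_F(\exp(n k_e))$, the function $r_e$ is total computable (uniformly in $e$) and $r_e(n) < G(n)$ for almost every $n$. The key observation is: if $f \in [T_F]$ is such that the use of $\Phi_e^f(n)$ is at most $\exp(n k_e)$ for almost every $n$, then $\Phi_e^f(n) \in V_e(n)$, and hence $\Phi_e^f(n) \neq r_e(n)$ for almost every $n$, placing $\Phi_e^f$ outside $\mathrm{IOE}(G)$.

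Building $f$ by finite extension on $T_F$: at each stage I would handle either a meeting requirement $M_i$ (find a fresh $n$ with $\phi_i(n)\!\downarrow < F(n)$ and extend to force $f(n) = \phi_i(n)$; these requirements are dense, giving $f \in \mathrm{IOE}(F)$) or a use-control requirement $R_e$. The strategy for $R_e$, with current $\sigma_s$ and a fresh witness $n$, is to search for some $\tau \in T_F$ extending $\sigma_s$ with $|\tau| \le \exp(n k_e)$ and $\Phi_e^\tau(n)\!\downarrow$ in $\le |\tau|$ steps; if such $\tau$ exists, set $\sigma_{s+1} = \tau$, controlling the use at $n$. If no such $\tau$ exists, then either (a) some extension $\rho \supset \sigma_s$ in $[T_F]$ satisfies $\Phi_e^\rho(n)\!\uparrow$, in which case one commits to a path through $\rho$ and freezes $\Phi_e^f$ as non-total at $n$, or (b) every path through $T_F \cap [\sigma_s]$ eventually converges at $n$, and K\"onig's lemma on the finitely branching $T_F$ yields a uniform use bound on $[T_F] \cap [\sigma_s]$ at $n$. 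The hard part will be ironing out this dichotomy so that only finitely many $n$ fall into the ``bad'' subcase of (b) where the uniform use bound exceeds $\exp(n k_e)$: the standard move is to route such accumulations into a non-totality action via a second layer of K\"onig's-lemma analysis, and to coordinate the interleaving of $M_i$- and $R_e$-stages so that the meeting actions do not prematurely inflate $|\sigma_s|$ past $\exp(n k_e)$ before $R_e$ can fire.

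With $f$ in hand, the verification is short: $f \in \mathrm{IOE}(F)$ by the meeting requirements. For any $g \le_T f$, write $g = \Phi_e^f$. If $R_e$ fired the non-totality branch, $g$ is not a total function and so $g \notin \mathrm{IOE}(G)$. Otherwise the use of $\Phi_e^f(n)$ is $\le \exp(n k_e)$ for almost all $n$, so by the key observation $g(n) \neq r_e(n)$ for almost all $n$; since $r_e$ is total computable and bounded by $G$ almost everywhere, this places $g$ outside $\mathrm{IOE}(G)$, completing the proof.
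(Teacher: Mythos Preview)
Your approach has a genuine gap at the use-control step, and the gap cannot be repaired along the lines you sketch. Consider a functional of the form $\Phi_e^X(n) = X(h(n))$ for a computable strictly increasing $h$ that eventually dominates $\lambda n.\exp(n k)$ for every fixed $k$. This functional is total on every $F$-bounded $X$, and its use at $n$ is exactly $h(n)+1$, which exceeds $\exp(n k_e)$ for all large $n$ regardless of which $k_e$ you chose. Hence, in your dichotomy, case~(a) never occurs (there is no non-total path to ``commit to''), and case~(b) occurs with the uniform use bound exceeding $\exp(n k_e)$ for \emph{every} large $n$, not just finitely many. There is no ``second layer of K\"onig's-lemma analysis'' that can route this into non-totality, because the functional is simply total with large use everywhere. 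Your diagonal $r_e$ then satisfies $V_e(n)=\emptyset$ and $r_e(n)=0$ for large $n$, but nothing in your construction prevents $f(h(n))=0$ infinitely often; indeed, the meeting requirements may force exactly that. So the key observation you state does not apply, and you have no argument that $\Phi_e^f\notin\mathrm{IOE}(G)$ for this $e$.

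The paper's proof avoids use bounds entirely. Instead of a finite-extension construction, it forces with \emph{computable infinitely often $G$-fat trees}: subtrees of $T_F$ that retain, along every path, infinitely many full-branching blocks of carefully calibrated heights. The key lemma (Lemma~\ref{lemma:all}) shows that, given such a tree $T$ and a functional $\Phi$, one can compute a fat subtree $T'\subseteq T$ and a computable $g<G$ with $\Phi(X,n)\neq g(n)$ for all $n$ and all $X\in[T']$. The function $g$ is built \emph{dynamically}, by running $\Phi$ on the actual leaves of the current finite tree (of whatever length is needed for convergence), and then using a halving argument (Lemma~\ref{lemma:complicated}) to thin the full-branching block while fixing one bit of $g(n)$ at a time. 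The growth hypothesis on $G$ is what guarantees that the number of leaves stays below $\log G(n)$, so that a value below $G(n)$ avoiding all of them exists. The fat-tree structure is what replaces your use bound: it gives enough room both to diagonalize against $\Phi$ and to keep meeting every computable $F$-bounded function inside the surviving tree.
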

\n For instance, if $F(n) = n$ we can let  $G(n) = \exp \exp \exp (n^2)$. 

 The rest of the section is dedicated to the proof of \Cref{th-ioe}. Let us first introduce some terminology.
\begin{definition}
By a \emph{tree} we mean a set of strings closed under prefixes. Let $H : \omega \rightarrow \omega$. We denote by ${}^{<\omega} H$ the tree consisting of the strings $\sss$ such that $\sss(i) < H(i)$ for each $i < \sssl$.

 Let $T \subseteq {}^{<\omega} H$ be a tree. We say that $T$ is \emph{$H$-full-branching} if for every $f < H$ we have $f \in [T]$. For a string $\sigma \in {}^{<\omega} \omega$ and $n > |\sigma|$, we say that $T$ is $H \rest n$-\emph{full-branching above $\sigma$} if for every $f < H$ with $\sigma \prec f$ we have $f \rest n\ \in T$.
\end{definition}

Given a node $\sigma$ of length $m$ and a $H \rest {m+n}$-full-branching tree $T$ above $\sigma$, we sometimes say that $n$ is the \emph{height} of the full-branching part of $T$. We begin with one of those lemma whose statement is more complicated than the proof.

\begin{lemma} \label[lemma]{lemma:complicated}
{\rm Let $H : \omega \rightarrow \omega$. Let $\sigma \in {}^{<\omega} H$ with $|\sigma| = m$. Let $n \in \omega$ and let $T \subseteq {}^{<\omega} H$ be a finite $H \rest {m + 2n}$-full-branching tree above $\s$. Let $\s_0, \dots, \s_{k}$ be all the leaves of $T$. Consider a partition $C_1,C_2$ of these leaves. Then one of the following holds:
\begin{enumerate}
\item[(i)] If we keep only the nodes compatible with some element of $C_1$ and discard the rest, the remaining tree is $H \rest {m + n}$-full-branching above~$\s$.
\item[(ii)]  If we keep only the nodes compatible with some element of $C_2$ and discard the rest, there exists a node $\tau \succ \sigma$ of length $m+n$ such that the remaining tree is $H \rest {m + 2n}$-full-branching above $\tau$.
\end{enumerate} 
In particular, in both cases, the full-branching part of the remaining tree has height $n$.}
\end{lemma}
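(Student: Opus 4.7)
My plan is a straightforward case split on whether condition (i) fails, using full-branchingness at depth $m+2n$ to produce a witness $\tau$ at depth $m+n$ for condition (ii).

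First I would unpack what ``keep only nodes compatible with some element of $C_j$'' means: since the $\sigma_i$ are leaves (hence maximal in $T$), a node $\rho \in T$ is retained in the $C_j$-restriction exactly when $\rho \preceq \sigma_i$ for some $\sigma_i \in C_j$. So (i) asserts that every $f<H$ with $\sigma \prec f$ has $f\rest{m+n}$ extended by some leaf in $C_1$, and (ii) asserts the existence of some $\tau$ of length $m+n$ extending $\sigma$ such that every $g<H$ with $\tau \prec g$ has $g\rest{m+2n}$ extended by some leaf in $C_2$.

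Now suppose (i) fails. Then there exists $f<H$ with $\sigma \prec f$ such that $f\rest{m+n}$ is NOT a prefix of any leaf in $C_1$. Set $\tau := f\rest{m+n}$. Since $T$ is $H\rest{m+2n}$-full-branching above $\sigma$ and $\sigma \prec \tau$, every leaf of $T$ extending $\tau$ exists, and by choice of $\tau$ each such leaf lies in $C_2$. To verify (ii) for this $\tau$, take any $g<H$ with $\tau \prec g$; by full-branchingness above $\sigma$, the node $g\rest{m+2n}$ belongs to $T$. This node is either itself a leaf (in which case it is a leaf extending $\tau$, hence in $C_2$) or has proper extensions in $T$, every maximal such being a leaf extending $\tau$ and therefore in $C_2$. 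In either case $g\rest{m+2n}$ is a prefix of some leaf in $C_2$, so it is retained. Hence the $C_2$-restricted tree is $H\rest{m+2n}$-full-branching above $\tau$, giving (ii).

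This is essentially a pigeonhole argument on the leaves, so there is no real obstacle; the only mild subtlety is handling the possibility that $T$ contains nodes of length exceeding $m+2n$, but this is absorbed by noting that every node of $T$ at length $m+2n$ that is not itself a leaf is extended within $T$ to some leaf, and that leaf inherits the $\tau$-extension property. The final sentence of the lemma (height $n$ in both cases) is then immediate: in case (i) the full-branching part goes from $\sigma$ up to depth $m+n$, and in case (ii) from $\tau$ (length $m+n$) up to depth $m+2n$.
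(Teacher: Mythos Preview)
Your proof is correct and follows the same case split as the paper: assume (i) fails, extract a $\tau$ of length $m+n$ none of whose extending leaves lie in $C_1$, and conclude (ii). If anything, you are more careful than the paper in explicitly handling the possibility that leaves of $T$ have length strictly greater than $m+2n$; the paper's one-line argument tacitly treats the length-$(m+2n)$ nodes above $\tau$ as leaves, whereas you correctly pass from $g\rest{m+2n}$ to a maximal extension in $T$.
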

\begin{proof}
Suppose (i) fails. Then there is a string $\tau \succ \s$, $\tau \in T$  of length $m+n$ such that all the extensions in $T$ of length $m+2n$ of $\tau$ are leaves of $T$ which are not in $C_1$. Then  these leaves are in $C_2$. So (ii) holds.
\end{proof}

Given any functional $\Phi$, we will be  able to compute an infinite tree $T \subseteq {}^{<\omega} F$ such that:
\begin{enumerate}
\item[(1)] For every path $X \in [T]$ we have that $\Phi(X) \notin \mathrm{IOE}(G)$.
\item[(2)] For every path $X \in [T]$, there are infinitely many $m$ such that $T$ is $F \rest {m+1}$-full-branching above $X \rest m$.
\item[(3)] $T$ has no dead ends.
\end{enumerate}
Note that (3) ensures that the tree is computable in a strong sense : if a node $\s $ is in $ T$, then  there exists an infinite path $X \in [T]$ with $X \succ \s$. By combining (2) with (3) we actually know that the set of infinite paths extending $\s$ is perfect. While (1) ensures that no path of $T$ computes an element of $\mathrm{IOE}(G)$ via $\Phi$, (2) ensures that the tree $T$ still contains an element of $\mathrm{IOE}(F)$. Also,  starting from the tree ${}^{<\omega} F$, one can  compute a sub-tree $T$ which satisfies (1) and (2) using \Cref{lemma:complicated}. 

In order to help the reader understand the full proof, we sketch here a construction to obtain, under the assumption that $G$ grows sufficiently faster than $F$, a computable tree $T$ that satisfies    (1) -- (3) given some  functional $\Phi$. Of course this allows us to ``defeat" only one functional $\Phi$. To defeat more than one functional $\Phi$ we would need not only to obtain (2), but to obtain a computable tree for which we have infinitely many large full-branching blocks. In this case we can repeat the construction in the tree we end up with,  so as to  defeat yet another  functional. This  will be achieved by the upcoming \Cref{lemma:all},  elaborating on the ideas already present in the   construction we discuss now.

\begin{proof}[Sketch of a construction to obtain (1), (2) and (3)]

We work here under the assumptions of \Cref{th-ioe}. Note however that in the simpler case of defeating only one functional, the assumption on how fast $G$ grows compared to $F$ can be relaxed somewhat: we merely need that
$$\forall k\ \forall^\infty n\ \exp(k \cdot F(k + \exp(n))) < G(n).$$ 
In the following all strings will be chosen from the $F$-full-branching tree. We can suppose without loss of generality that given any $\s$ and any $n$ there exists an extension $\tau$ of $\s$ such that $\Phi(\tau, n)$ is defined. Otherwise there is a string $\s$ and some $n$ such that $\Phi(X, n)$ is undefined for every path $X$ extending $\s$ and the desired tree $T$ is given by all the nodes compatible with~$\s$. 

The construction  inductively  defines     finite trees $T_0 \subset T_1 \subset \dots$ together with integers $n_0 < n_1 < \dots$ such that :
\begin{enumerate}
\item[(a)] For every $k$, every leaf of $T_k$ has a full-branching extensions in $T_{k+1}$.
\item[(b)] For every $k$, every leaf $\rho \in T_k$ and every $t < n_k$, every value $\Phi(\rho, t)$ is defined.
\item[(c)] For every $k$, every $t \leq n_k$ one value smaller than G(t) is different from $\Phi(\rho, t)$ for every leaf $\rho \in T_k$.
\item[(d)] For every $k$ we have $\exp(c) < G(n_k)$ where $c$ is the number of leaves in $T_k$
\end{enumerate}

Note that unlike (a) (b) and (c), (d) does not achieve by itself anything we want, but it will be necessary at each step to continue the induction, in particular in order to show (c). 

To begin the inductive definitions,  let $n_0$ be least such that $$\exp(F(\exp(n_0))) < G(n_0).$$  Consider the $F \rest{\exp(n_0)}$-full-branching tree above the empty string. Let $\s_0, \dots, \s_{c}$ be an enumeration of the leaves of this $F \rest{\exp(n_0)}$-full-branching tree. For each $i \leq c$, we look for an extension $\tau_i$ of $\s_i$ such that $\Phi(\tau_i, t)$ is defined for every $t \leq n_0$. We can assume without loss of generality that every node $\tau_i$ has the same length $m_0$ (presumably much larger than $n_0$). We now partition the set of nodes $\tau_i$ into those such that $\Phi(\tau_i, 0) = 0$ and those such that $\Phi(\tau_i, 0) \neq 0$. By \Cref{lemma:complicated}, we can either remove all nodes of length $m_0$ forcing $\Phi(0) = 0$, or all nodes of length $m_0$ forcing another value (and everything compatible with these nodes), in such a way that we have a node $\s$ above which the tree consisting of the nodes we keep is $F \rest {|\sss| + \exp(n_0-1)}$-full branching. Note that $\sigma$ can be either the root of the tree or a string  of length $\exp(n_0-1)$.

We inductively continue  the previous operation for each of the $n_0$ first values of $\Phi$. At the end, we have a node $\s$ above which there is a $F \rest {|\sigma| + 1}$-full-branching tree, and such that given any $t \leq n_0$, the remaining nodes $\tau_i$ of length $m_0$ are altogether such that $\Phi(\tau_i, t)=0$ or such that $\Phi(\tau_i, t) \neq 0$. Let $T_0$ be the tree consisting of the remaining nodes $\tau_i$ and everything below them. For every $t \leq n_0$, in the first case we define $g(t) = 1$ and in the second $g(t) = 0$. Note that as $\exp(F(\exp(n_0))) < G(n_0)$, then also we must have $\exp(c) < G(n_0)$ where $c$ is the number of nodes of length $m_0$ in $T_0$.

Suppose now by induction that we have a finite tree $T_k$ with leaves $\tau_0, \dots, \tau_c \in T_k$ each of length $m_k$, and a value $n_k$ such that (a), (b), (c) and (d) are verified. In particular we have $\exp(c) < G(n_k)$. Let $n_{k+1} > n_k$ be the smallest such that 
$$\exp(c \cdot F(m_k + \exp(n_{k+1}))) < G(n_{k+1})$$
Let us show that for any $a$ with $0 \leq a \leq c$, we can computably find a finite tree $T^a$ whose nodes are all compatible with $\tau_a$ and such that:

\begin{itemize}
\item $T^a$ is $F \rest {|\s| + 1}$-full branching above some $\s \succ \tau_a$.
\item Each leaf $\rho$ of $T^a$ is such that $\Phi(\rho, t)$ is defined for $n_{k} < t \leq n_{k+1}$.
\item For every $n_k < t \leq n_{k+1}$, there is at least one value smaller than $G(t)$ which is different from every value $\Phi(\rho, t)$ for leaves $\rho$ of $T^a$.
\end{itemize}


For any $a \leq c$ we do the following: consider the finite $F \rest {|\tau_a| + \exp(n_{k+1})}$-full branching tree above $\tau_a$. Let $\s_0, \dots, \s_{k}$ be an enumeration of the leaves of this finite tree. For each of these nodes $\s_i$, look for an extension $\tau_i'$ such that $\Phi(\tau_i', t)$ is defined for every $n_k < t \leq n_{n+1}$. Let ${T^a}'$ be the finite tree consisting of these extensions $\tau_i'$ and everything below them.

We now partition the set of leaves of ${T^a}'$ into two sets $C_1$ and $C_2$ such that the leaves $\rho$ in $C_1$ are these for which the $a$-th bit of $\Phi(\rho, n_{k}+1)$ is $0$ and the leaves in $C_2$ are these for which the $a$-th bit of $\Phi(\rho, n_{k}+1)$ is $1$. By the \Cref{lemma:complicated}, we can either remove all nodes of $C_1$ or all nodes of $C_2$ (and everything compatible with these nodes), in such a way that we have a node $\s \in {T^a}'$ such that the tree consisting of the nodes we keep, is $F\rest {|\s| + \exp(n_{k+1}-1)}$-full branching above~$\s$.

We inductively continue  the previous operation for each of the next values of $\Phi$ up to $n_{k+1}$. At the end, we have a node $\s \in {T^a}'$ above which there is a $F \rest {|\s| + 1}$-full-branching tree as follows: for each $n_k < t \leq n_{k+1}$, for all the remaining leaves $\tau'$ of our $F \rest {|\s| + 1}$-full-branching tree, the $a$-th bit of $\Phi(\tau', t)$ is the same. We define the tree $T^a$ to be this set of remaining leaves $\tau'$ and everything below them.

Once every tree $T^a$ has been defined, we define each value of $g(t)$ for $n_k < t \leq n_{k+1}$, as follows: If the leaves $\rho$ of $T^a$ are such that the $a$-th bit of $\Phi(\rho, t)$ equals $0$, then the $a$-th bit of $g(n)$ is defined to be $1$, and vice-versa. Recall that we have $\exp(c) < G(n_k)$. In particular any number coded on at most $c$ bits is smaller than $G(t)$ for any $n_k < t \leq n_{k+1}$. It follows that $g(t) < G(t)$ for any $n_k < t \leq n_{k+1}$. Also we necessarily have that $g(t)$ is different from every possible value $\Phi(\rho, t)$ for every leaf $\rho \in \bigcup_{a \leq c} T^a$. Let $T_{k+1} = \bigcup_{a \leq c} T^a$. Note that by the choice of $n_{k+1}$ we have that $\exp(d) < G(n_{k+1})$ where $d$ is the number of leaves in $T_{k+1}$.

By continuing the induction, we define a computable subtree $T = \bigcup_{k} T_k$ of the $F$-full-branching tree as well as a computable function $g < G$, such that along any path of $T$, infinitely many nodes are full-branching, and such that for any $f \in [T]$ we have that $\Phi(f, n) \neq g(n)$ for any $n$.
\end{proof}

  Suppose now that we want to defeat every functional. Let  $\Phi_0, \Phi_1, \Phi_2, \dots$ be a list of all functionals. The previous proof gives us a tree $T_0$ which defeats $\Phi_0$. To defeat $\Phi_1$, we have to perform a similar construction, but starting now from the computable tree $T_0$ in place of the $F$-full-branching tree ${}^{<\omega} F$. In this way  we obtain a computable tree $T_1 \subseteq T_0$ which defeats both $\Phi_0$ and $\Phi_1$. The main problem is that to use \Cref{lemma:complicated} we need to work in a tree that has large full-branching blocks (which is the case of ${}^{<\omega} F$). Also if $T_0$ itself does not have large full-branching blocks, it is not necessarily possible to defeat $\Phi_1$ starting from $T_0$ in place of ${}^{<\omega} F$. To overcome this problem, it is not sufficient  to merely ensure (2) for $T_0$: we actually  need to ensure that for every path $X \in T_0$, there are infinitely many $m$ such that $T_0$ is $F \rest {m+n_m}$-full-branching above $X \rest m$ for $n_m$ sufficiently large. This leads to the following definition:

\begin{definition}
Let $F,G \in {}^{\omega} \omega$ be order functions. Let $T \subseteq {}^{<\omega} F$ be a finite tree. Let $n_1 < n_2 < \dots < n_k$. We say that $T$ is \emph{$G$-fat for $(n_1, n_2, \dots, n_k)$} if for every leaf $\sigma \in T$, there exists $m_1 < m_2 < \dots < m_k < |\s|$ such that for every $1 \leq t \leq k$:
\begin{enumerate}
\item The tree $T$ is $F \rest {m_t + \exp(n_t \cdot t)}$-full-branching above $\sigma \rest {m_t}$.
\item $\exp(w_{F}(m_t + \exp(n_t \cdot t))) < G(n_t)$.
\end{enumerate}
We say that $T\subseteq {}^{<\omega} F$ is \emph{infinitely often $G$-fat} if there exists an infinite sequence $n_1 < n_2 < \dots$ such that for every $k$, there exists $m$ such that $T$ restricted to its node of length $m$, is $G$-fat for $(n_1, \dots, n_k)$.
\end{definition}

The following lemma is the heart of the proof. It says that for  any computable infinitely often $G$-fat tree $T$ and any functional $\Phi$, there is  a computable  infinitely often $G$-fat tree $T' \subseteq T$ such that no path of $T'$ computes an element of $\mathrm{IOE}(G)$ via $\Phi$.

\begin{lemma} \label[lemma]{lemma:all}
Let $F \in {}^{\omega} \omega$ be an order function. Let $G \in {}^{\omega} \omega$ be an order function such that $G(n) \geq 2$ for every $n$. Let $T \subseteq {}^{<\omega} F$ be a computable infinitely often $G$-fat tree with no dead ends. Let $\Phi$ be a functional. There exists a computable infinitely often $G$-fat tree $T' \subseteq T$ with no dead ends, and a computable function $g < G$ such that for every path $X \in [T']$ for which $\Phi(X)$ is total, we have $\Phi(X, n) \neq g(n)$ for every $n$.
\end{lemma}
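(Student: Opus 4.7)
The plan is to adapt the one-functional sketch to run inside $T$, using the infinitely often $G$-fat property as a renewable supply of full-branching blocks on which Lemma \ref{lemma:complicated} can be applied. Fix a witness sequence $n_1 < n_2 < \dots$ for the $G$-fatness of $T$, together with computable lengths $M_k$ such that $T$ truncated at length $M_k$ is $G$-fat for $(n_1,\dots,n_k)$. I will build a nested computable sequence of finite subtrees $T_0 \subseteq T_1 \subseteq \dots$ of $T$, a strictly increasing sequence of stage indices $t_0 < t_1 < \dots$, and the function $g$ by simultaneous recursion, and set $T' = \{\sigma \in T \mid \sigma \preceq \rho \text{ for some leaf } \rho \text{ of some } T_k\}$.

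At stage $k$, assume the leaves $\tau_0,\dots,\tau_c$ of $T_k$ all have a common length $\ell_k$, that $g$ has been defined on $[0,n_{t_k}]$, and that each $\tau_a$ already forces $\Phi(\cdot, t) \neq g(t)$ for every $t \leq n_{t_k}$. Pick $t_{k+1} > t_k$ large enough that, using the $G$-fatness for $(n_1,\dots,n_{t_{k+1}})$, each $\tau_a$ sits below a full-branching block of $T$ of height at least $\exp(n_{t_{k+1}} \cdot t_{k+1})$ with $\exp(w_F(\ell_k + \exp(n_{t_{k+1}} \cdot t_{k+1}))) < G(n_{t_{k+1}})$, and so that moreover $\exp(c+1) < G(t)$ for all $n_{t_k} < t \leq n_{t_{k+1}}$. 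Inside each such block, computably search for extensions on which $\Phi(\cdot, t)$ converges for every $n_{t_k} < t \leq n_{t_{k+1}}$; if for some $\tau_a$ no such extensions exist, the subtree of non-convergence above $\tau_a$ trivially defeats $\Phi$ on that cone and can be folded into $T_{k+1}$ unchanged. Now iteratively apply Lemma \ref{lemma:complicated} once per bit of $\Phi(\cdot, t)$ for each $n_{t_k} < t \leq n_{t_{k+1}}$: each application halves the height of the retained full-branching block and isolates a sub-block on which the given bit of $\Phi(\cdot, t)$ is constant; the total number of applications across all leaves is bounded by $(c+1)\sum_{n_{t_k} < t \leq n_{t_{k+1}}} \lceil \log_2 G(t) \rceil$.

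This leaves a subtree $T^a$ under each $\tau_a$, $F$-full-branching above some $\sigma_a$, on which each bit of $\Phi(\cdot,t)$ is pinned to a fixed value for every $n_{t_k} < t \leq n_{t_{k+1}}$. Because there are at most $c+1$ leaves and $\exp(c+1) < G(t)$ on the relevant range, the set of tuples of pinned values has at most $2^{c+1} < G(t)$ elements, and we can computably pick $g(t) < G(t)$ that disagrees bitwise with every pinned tuple at $t$. Set $T_{k+1} = \bigcup_a T^a$; by construction every path through $T'$ either has $\Phi(X)$ partial or satisfies $\Phi(X,t) \neq g(t)$ for all $t$. To verify that $T'$ is infinitely often $G$-fat, note that by choosing $t_{k+1}$ generously we can arrange that, after all the Lemma \ref{lemma:complicated} splits, the surviving full-branching residue above each $\sigma_a$ still has height at least $\exp(n'_{k+1} \cdot (k+1))$ for some rapidly growing subsequence $n'_1 < n'_2 < \dots$; this subsequence then witnesses the $G$-fatness of $T'$. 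The main obstacle is precisely this accounting: we must show that the strong condition $\exp(w_F(m + \exp(n_t \cdot t))) < G(n_t)$ built into the definition of $G$-fatness provides enough headroom to absorb all splittings used to diagonalize (which only require the much weaker $\exp(c+1) < G(t)$) and still leave a full-branching residue certifying the next entry of the witness for $T'$. The hypothesis $\forall k\,\forall^\infty n\, \exp(w_F(\exp(n \cdot k))) < G(n)$ of \Cref{th-ioe} is what makes this slack available uniformly across stages, completing the induction.
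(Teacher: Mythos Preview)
Your approach has a genuine gap in the halving accounting. You write that you ``iteratively apply Lemma~\ref{lemma:complicated} once per bit of $\Phi(\cdot,t)$'' and that the total number of applications is bounded by $(c+1)\sum_{n_{t_k}<t\le n_{t_{k+1}}}\lceil\log_2 G(t)\rceil$. But the per-tree count is what erodes the full-branching height, and even the single term $\lceil\log_2 G(n_{t_{k+1}})\rceil$ already exceeds the available height $\exp(n_{t_{k+1}}\cdot t_{k+1})$: by the very hypothesis on $G$ in \Cref{th-ioe} we have $\log_2 G(n) > w_F(\exp(n\cdot k))$ eventually for every $k$, and $w_F(\exp(n\cdot k))$ grows (eventually) at least like $2^{\exp(n\cdot k)}$, dwarfing $\exp(n\cdot k)$. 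So after pinning all bits of $\Phi(\cdot,t)$ there is no full-branching residue left, and your claim that ``by choosing $t_{k+1}$ generously'' some height $\exp(n'_{k+1}\cdot(k+1))$ survives cannot be made good. Relatedly, the sentence ``the set of tuples of pinned values has at most $2^{c+1}<G(t)$ elements'' is inconsistent with having pinned all bits: in that case there are $c+1$ pinned values, not $2^{c+1}$.

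The paper's proof avoids this by pinning \emph{only one} bit per tree: on $T^a$ it fixes just the $a$-th bit of $\Phi(\rho,t)$ for each $t$ in the current interval. This costs exactly one application of Lemma~\ref{lemma:complicated} per value of $t$, so at most $n_k$ halvings, and starting from height $\exp(n_k\cdot(k+1))$ leaves height $\exp(n_k\cdot k)$, which is precisely what is needed to re-certify $G$-fatness via condition~(4). The value $g(t)$ is then defined bit by bit, setting its $a$-th bit opposite to the one pinned on $T^a$; this is where the bound $\exp(c)<G(n_k)$ enters, and in the paper that bound is \emph{derived} from the $G$-fatness condition $\exp(w_F(m_a+\exp(n_k\cdot k)))<G(n_k)$ at the previous stage rather than assumed to be arrangeable by enlarging $t_{k+1}$ (note that your constraint $\exp(c+1)<G(n_{t_k}+1)$ involves the \emph{left} endpoint of the interval and so cannot be met just by pushing $t_{k+1}$ out). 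If you rework the splitting to pin only the $a$-th bit on $T^a$ and carry the leaf-count bound as an explicit inductive invariant derived from $G$-fatness, your outline becomes essentially the paper's argument.
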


Before giving the proof of the Lemma, we show how to use it in order to obtain the proof of \Cref{th-ioe}, using simple forcing machinery.

\begin{proof}[Proof of \Cref{th-ioe}]
Let $F \in {}^\omega \omega$ be an order function. Let $G \in {}^\omega \omega$ be an order function with $G(n) \geq 2$ for every $n$ and such that:
$$\forall k\ \forall^\infty n\ \exp(w_{F}(\exp(n \cdot k)) < G(n)$$
Let us show that there exists a function $f \in \mathrm{IOE}(F)$ and such that $g \notin \mathrm{IOE}(G)$ for every $g \leq_T f$. The proof is done by forcing, using \Cref{lemma:all}. We first need to argue that under the above hypothesis, the tree $T = {}^{<\omega} F$ is infinitely often $G$-fat. In what follows, the notation $T \rest n$ refers to the finite tree consisting of the nodes of $T$ of length smaller than or equal to $n$. Let $n_1$ be the smallest such that $\exp(w_{F}(\exp(n_1))) < G(n_1)$. The tree $T \rest {\exp(n_1)}$ is $F \rest {\exp(n_1)}$-full-branching above the empty string and in particular the tree $T \rest {\exp(n_1)}$ is $G$-fat for $(n_1)$. Suppose now that we have defined $n_1 < \dots < n_k$ such that $T \rest {\exp(n_k)}$ is $G$-fat for $(n_1, \dots, n_k)$. Let $n_{k+1}$ be the smallest such that 
$$
\begin{array}{rccl}
&\exp(n_k) + \exp(n_{k+1} \cdot (k+1))&<&\exp(n_{k+1} \cdot (k+2))\\
\text{ and }&\exp(w_{F}(\exp(n_{k+1} \cdot (k+2))))&<&G(n_{k+1})
\end{array}
$$
Then in particular we have 
$$\exp(w_{F}(\exp(n_k) + \exp(n_{k+1} \cdot (k+1)))) < G(n_{k+1})$$
It follows that the tree $T \rest {\exp(n_{k+1})}$ is $G$-fat for $(n_1, \dots, n_k, n_{k+1})$. Therefore the tree $T$ is infinitely often $G$-fat for the infinite sequence $\{n_k\}_{1 \leq k < \omega}$.

So we start the forcing with the tree $T = {}^{<\omega} F$. Let $\P$ be the set of forcing conditions consisting of all the computable infinitely often $G$-fat subtrees of $T$ with no dead ends. For two forcing conditions $P_1,P_2 \in \P$, the partial order $P_2 \preceq P_1$ is defined by $P_2 \subseteq P_1$. Let $\Phi$ be a functional. By \Cref{lemma:all}, the set of infinitely often $G$-fat trees $P \in \P$ such that for every path $X$ of $[P]$ we have $\Phi(X) \notin \mathrm{IOE}(G)$, is dense in $\P$.

We simply have to argue that for any computable function $f < F$, the set of infinitely often $G$-fat trees $P \in \P$ such that every path $X$ of $[P]$ equals at least once to $f$, is dense in $\P$. It is clear, because given a tree $P \in \P$, consider any node $\tau \in P$ of length $m$ such that $P$ is $F\rest {m+1}$-full-branching above $\tau$. Let $\tau'$ equals $\tau \cat f(m+1)$. Note that $\tau' \in P$. Now let $P'$ to be the nodes of $P$ which are compatible with $\tau'$. It is clear that $P' \in \P$ and that $P' \preceq P$. Thus the set of infinitely often $G$-fat trees $P \in \P$ such that every path $X$ of $P$ equals at least once to $f$, is dense in $\P$.

Consider now any sufficiently generic set of conditions $\{P_n\}_{n \in \omega}$ with $P_1 \succ P_2 \succ \dots$. We have that $\bigcap_n P_n$ contains at least one infinite path $X$. Also this path necessarily equals at least once every computable function bounded by $F$, and thus equals infinitely often every computable function bounded by $F$. It follows that $X \in \mathrm{IOE}(F)$. Furthermore for any function $\Phi$ we have that $\Phi(X) \notin \mathrm{IOE}(G)$. This shows the theorem.
\end{proof}

%

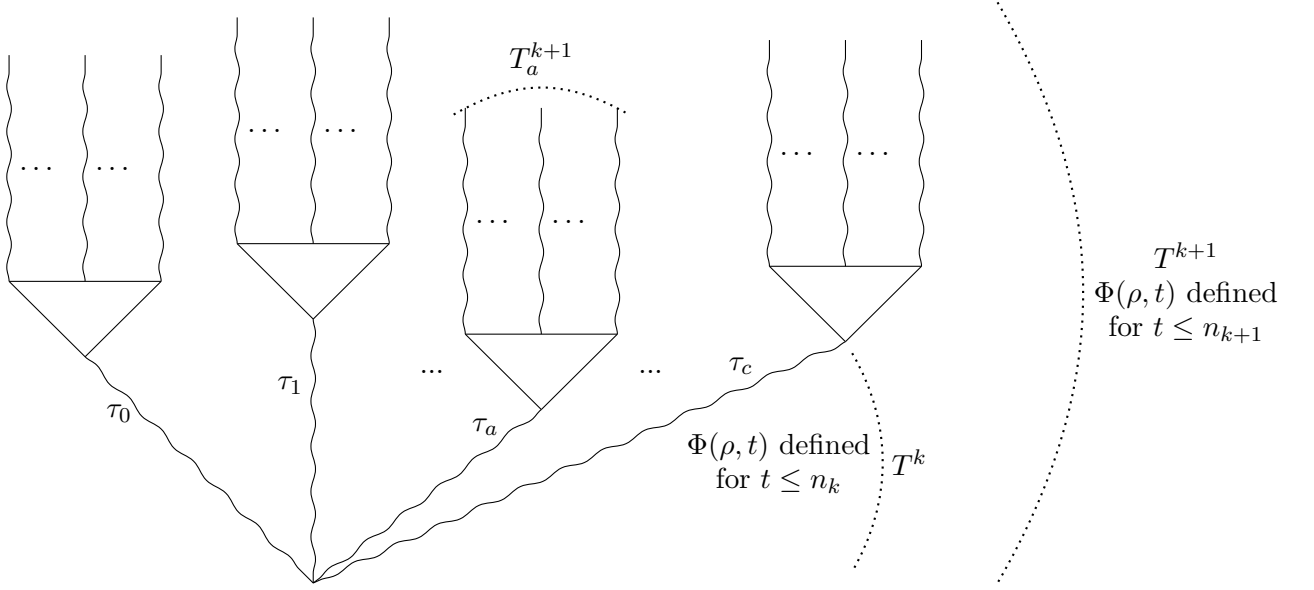
\begin{figure}
\begin{center}
\hspace*{0in}
\resizebox{\textwidth}{!}{\begin{tikzpicture}[every text node part/.style={align=center}]
\tikzset{every tree node/.style={align=center}}

\coordinate[] (t10);
\coordinate[above left=1cm and 1cm of t10] (t11);
\coordinate[right=1cm of t11] (t1m);
\coordinate[right=2cm of t11] (t12);

\coordinate[above=3cm of t11] (h11);
\coordinate[right=1cm of h11] (h1m);
\coordinate[right=2cm of h11] (h12);

\coordinate[above right=.5cm and 3cm of t10] (t20);
\coordinate[above left=1cm and 1cm of t20] (t21);
\coordinate[right=1cm of t21] (t2m);
\coordinate[right=2cm of t21] (t22);

\coordinate[above=3cm of t21] (h21);
\coordinate[right=1cm of h21] (h2m);
\coordinate[right=2cm of h21] (h22);

\coordinate[below right=1.2cm and 3cm  of t20] (t30);
\coordinate[above left=1cm and 1cm of t30] (t31);
\coordinate[right=1cm of t31] (t3m);
\coordinate[right=2cm of t31] (t32);

\coordinate[above=3cm of t31] (h31);
\coordinate[right=1cm of h31] (h3m);
\coordinate[right=2cm of h31] (h32);

\coordinate[above right=.9cm and 4cm of t30] (t40);
\coordinate[above left=1cm and 1cm of t40] (t41);
\coordinate[right=1cm of t41] (t4m);
\coordinate[right=2cm of t41] (t42);

\coordinate[above=3cm of t41] (h41);
\coordinate[right=1cm of h41] (h4m);
\coordinate[right=2cm of h41] (h42);

\coordinate[below right=3cm and 3cm of t10] (root);
\coordinate[right=7cm of root] (inv1);
\coordinate[right=2cm of inv1] (inv2);
\coordinate[above=7.7cm of inv2] (inv3);

\draw[-] (t10) to (t11);
\draw[-] (t11) to (t12);
\draw[-] (t12) to (t10);

\draw[-] (t20) to (t21);
\draw[-] (t21) to (t22);
\draw[-] (t22) to (t20);

\draw[-] (t30) to node[left] {...\ \ \ \ \ } (t31);
\draw[-] (t31) to (t32);
\draw[-] (t32) to node[right] {\ \ \ \ \ ...} (t30);

\draw[-] (t40) to (t41);
\draw[-] (t41) to (t42);
\draw[-] (t42) to (t40);

\draw[-,decorate,decoration={snake,amplitude=.3mm,segment length=6mm,post length=1mm}] (t10) to  node[near start, left] {$\tau_0$} (root);
\draw[-,decorate,decoration={snake,amplitude=.3mm,segment length=6mm,post length=1mm}] (t20) to node[near start, left] {$\tau_1$} (root);
\draw[-,decorate,decoration={snake,amplitude=.3mm,segment length=6mm,post length=1mm}] (t30) to node[pos=.1, left] {$\tau_a$\ } (root);
\draw[-,decorate,decoration={snake,amplitude=.3mm,segment length=6mm,post length=1mm}] (t40) to node[pos=.1, left] {$\tau_c$\ \ \ } (root);

\draw[-,decorate,decoration={snake,amplitude=.3mm,segment length=6mm,post length=1mm}] (t11) to node[right] {$\dots$} (h11);
\draw[-,decorate,decoration={snake,amplitude=.3mm,segment length=6mm,post length=1mm}] (t1m) to node[right] {$\dots$} (h1m);
\draw[-,decorate,decoration={snake,amplitude=.3mm,segment length=6mm,post length=1mm}] (t12) to (h12);

\draw[-,decorate,decoration={snake,amplitude=.3mm,segment length=6mm,post length=1mm}] (t21) to node[right] {$\dots$} (h21);
\draw[-,decorate,decoration={snake,amplitude=.3mm,segment length=6mm,post length=1mm}] (t2m) to node[right] {$\dots$} (h2m);
\draw[-,decorate,decoration={snake,amplitude=.3mm,segment length=6mm,post length=1mm}] (t22) to (h22);

\draw[-,decorate,decoration={snake,amplitude=.3mm,segment length=6mm,post length=1mm}] (t31) to node[right] {$\dots$} (h31);
\draw[-,decorate,decoration={snake,amplitude=.3mm,segment length=6mm,post length=1mm}] (t3m) to node[right] {$\dots$} (h3m);
\draw[-,decorate,decoration={snake,amplitude=.3mm,segment length=6mm,post length=1mm}] (t32) to (h32);

\draw[-,decorate,decoration={snake,amplitude=.3mm,segment length=6mm,post length=1mm}] (t41) to node[right] {$\dots$} (h41);
\draw[-,decorate,decoration={snake,amplitude=.3mm,segment length=6mm,post length=1mm}] (t4m) to node[right] {$\dots$} (h4m);
\draw[-,decorate,decoration={snake,amplitude=.3mm,segment length=6mm,post length=1mm}] (t42) to (h42);

\draw[-, dotted, thick, shorten <=-5pt,shorten >=-5pt] (h31) to[bend left] node[above] {$T_a^{k+1}$} (h32);

\draw[-, dotted, thick, shorten <=5pt,shorten >=5pt] (t40) to[bend left] node[right] {$T^{k}$} node[left] {$\Phi(\rho, t)$ defined\\for $t \leq n_{k}$} (inv1);
\draw[-, dotted, thick] (inv3) to[bend left] node[right] {$T^{k+1}$\\ $\Phi(\rho, t)$ defined\\for $t \leq n_{k+1}$} (inv2);
\end{tikzpicture}}
\end{center}
\caption{Construction of $T_{k+1}$ from $T_k$ in \Cref{lemma:all}. We have $2^c~<~G(n_k)$ and for every $n_k < t \leq n_{k+1}$, the $a$-th bit of $\Phi(\rho, t)$ is the same for every $\rho \in T_a^{k+1}$}.
\label{fig:1}
\end{figure}

\begin{proof}[Proof of \Cref{lemma:all}]
 Figure \ref{fig:1}    illustrates a part of the proof. Suppose first that there exists a node $\sigma \in T$ such that for every $X \succ \sigma$ with $X \in [T]$, we have that $\Phi(X)$ is partial. Then we define the computable tree $T'$ to be the nodes of $T$ compatible with $\sigma$. It is clear that $T'$ is infinitely often $G$-fat. Also as $\Phi(X)$ is partial for every $X \in [T']$ this case of the  lemma is verified.

So we can now suppose without loss of generality that for every node $\sigma \in T$ and every $n$, there exists an extension $\tau \succeq \sigma$ such that $\Phi(\tau, n)$ is defined. 
From $T$ we want to find $T' \subset T$ as in the lemma. This is done step-by-step. At each step $k$ we find values $n_1 < \dots < n_k$ and a finite tree $T_k \supseteq T_{k-1}$ such that $T_k$ is $G$-fat for $(n_1 < n_2 < \dots < n_k)$ and such that for leaves $\rho$ of $T_k$, the values $\Phi(\rho, e)$ are all different from something smaller than $G(e)$ for every $e \leq n_k$. However, we do not show right away that the values $\Phi(\rho, e)$ are all different from something smaller than $G(e)$. We first show that we can make large group of leaves which all agree on a specific bit. The fact that we can use that to have the values $\Phi(\rho, e)$ all different from something smaller than $G(e)$ will be made clear later. Here is a claim which says how one step is done : building the tree $T_k$ from the tree $T_{k-1}$.

\begin{claim} 
  Let $T \subseteq {}^{<\omega} F$ be a computable infinitely often $G$-fat tree.
Let $n_1 < n_2 < \dots < n_{k-1}$. Suppose that a finite tree $T_{k-1} \subseteq T$ is $G$-fat for $(n_1 < n_2 < \dots < n_{k-1})$. Let $\sigma_0, \dots, \sigma_c$ be the leaves of $T_{k-1}$. Then there exists $n_{k}$ such that above each node $\sigma_a$ for $0 \leq a \leq c$, we can find an extension $\tau_a \succeq \sigma_a$ of length $m_a$ and a finite tree $T^a \subseteq T$ whose nodes are all comparable with $\tau_a$ and such that: 

\begin{enumerate}
\item[(1)] $T^a$ is $F \rest {m_a + \exp(n_k \cdot k)}$-full-branching above $\tau_a$.
\item[(2)] For every $e$ with $n_{k-1} < e \leq n_k$ and every leaf $\rho \in T^a$, the value $\Phi(\rho, e)$ is defined.
\item[(3)] For every $e$ with $n_{k-1} < e \leq n_k$, there exists $i \in \{0,1\}$ such that for every leaf $\rho \in T^a$, the $a$-th bit of $\Phi(\rho, e)$ equals $i$.
\item[(4)] $\exp(w_{F}(m_a+\exp(n_k \cdot k))) < G(n_k)$. 
\end{enumerate}
In particular, letting $T_{k} = \bigcup_{a < c} T^a$, we have that $T_{k} \subseteq T$ is $G$-fat for $(n_1 < n_2 < \dots < n_k)$.\end{claim}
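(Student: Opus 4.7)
The plan is to extract from $T$, above each leaf $\sigma_a$ of $T_{k-1}$, a tall full-branching block, extend its leaves so that $\Phi$ converges on the relevant arguments, and then prune via \Cref{lemma:complicated} to enforce agreement on the $a$-th bit of $\Phi(\cdot, e)$ for each $e$ with $n_{k-1} < e \leq n_k$. Let $n_1^T < n_2^T < \cdots$ witness that $T$ is infinitely often $G$-fat; pick $j \geq k+1$ with $n_j^T > n_{k-1}$ and $j > \max_a |\sigma_a|$, and set $n_k := n_j^T$. Choose any $m$ for which $T \rest m$ is $G$-fat for $(n_1^T, \dots, n_j^T)$; since $G$-fatness yields $j$ strictly increasing witness levels below $m$, one gets $m \geq j > \max_a |\sigma_a|$ automatically. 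For each $a$, pick any leaf $\rho_a$ of $T \rest m$ extending $\sigma_a$ (possible as $T$ has no dead ends). The $G$-fatness of $T\rest m$ supplies witness levels $m_1^{\rho_a} < \cdots < m_j^{\rho_a} < m$, and $m_j^{\rho_a} \geq j - 1 \geq |\sigma_a|$, so $\tau_a := \rho_a \rest {m_j^{\rho_a}}$ extends $\sigma_a$; writing $m_a := m_j^{\rho_a}$, the tree $T$ is $F \rest{m_a + \exp(n_k \cdot j)}$-full-branching above $\tau_a$ and $\exp(w_F(m_a + \exp(n_k \cdot j))) < G(n_k)$.

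For each $a$ form $\hat T^a$ by taking every $F$-bounded extension of $\tau_a$ up to level $m_a + \exp(n_k \cdot j)$, and, below each such level-$(m_a + \exp(n_k \cdot j))$ node $\eta$, a single extension $\eta^+ \in T$ on which $\Phi(\eta^+, e)$ converges for every $e \leq n_k$. Such $\eta^+$ exist by the ambient assumption of the proof of \Cref{lemma:all} that every node of $T$ admits, at each argument, an extension on which $\Phi$ converges. Then iteratively apply \Cref{lemma:complicated}, once per $e \in (n_{k-1}, n_k]$, partitioning the current leaves by the $a$-th bit of $\Phi(\cdot, e)$ and keeping the side selected by the lemma, which halves the height of the full-branching block over its current base. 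Although the leaves lie strictly below the top of the full-branching block, the proof of \Cref{lemma:complicated} still goes through: when case (i) fails, the offending level-$(m+n)$ node $\tau$ has all its leaf descendants in $C_2$, so the level-$(m+2n)$ nodes above $\tau$ are all ancestors of $C_2$-leaves and thus survive in the pruned tree.

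It remains to verify the four conditions. After all $n_k - n_{k-1}$ halvings the surviving full-branching height is $\exp(n_k \cdot j - (n_k - n_{k-1})) \geq \exp(n_k \cdot k)$, since $j \geq k + 1$ yields $n_k j - n_k + n_{k-1} \geq n_k k$; this gives (1), while (2) and (3) follow directly from the construction. For (4), each pruning step can only shift the base of the full-branching block upward by at most the amount of height it removes, so the final $m_a + \exp(n_k \cdot k) \leq m_j^{\rho_a} + \exp(n_k \cdot j)$, and the bound follows by monotonicity of $w_F$. Finally, each leaf $\rho$ of $T_k = \bigcup_a T^a$ extends some $\sigma_a$, so its first $k-1$ $G$-fatness witnesses are inherited from $T_{k-1}$ (using $T_k \supseteq T_{k-1}$) and $m_a$ serves as its $k$-th witness. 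The main subtlety is arranging a single $j$, and hence a single $n_k$, that works uniformly for all $\sigma_a$; this is what the choice $j > \max_a |\sigma_a|$ accomplishes via the pigeonhole-style bound $m_j^{\rho_a} \geq j - 1$.
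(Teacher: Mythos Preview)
Your proof is correct and follows essentially the same route as the paper: locate a tall full-branching block in $T$ above each $\sigma_a$ using the infinitely-often $G$-fat witnesses, extend every top node so that $\Phi$ converges on all $e\le n_k$, and then apply \Cref{lemma:complicated} once for each $e\in(n_{k-1},n_k]$ to pin the $a$-th bit, checking that the surviving full-branching height is still at least $\exp(n_k\cdot k)$ and that the top of the block never moves up, which yields (4). You are in fact more careful than the paper on two points it leaves implicit: you explain why the base $\tau_a$ can be taken to extend $\sigma_a$ (via the pigeonhole bound $m_j^{\rho_a}\ge j-1$ and the choice $j>\max_a|\sigma_a|$), and you justify why \Cref{lemma:complicated} still applies when the leaves sit beyond the top of the full-branching block.
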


We first show how to use this claim  in order to build the tree $T'$ and the computable function $g$ of the lemma. At step $1$ we apply the claim starting from the empty tree, with the empty string as the only leaf. The claim gives us some $n_1 > 0$ and a finite subtree $T_1 \subseteq T$ which is $G$-fat for $(n_1)$ and such that for every $e \leq n_1$, the first bit of $\Phi(e, \rho)$ is the same for every leaf $\rho$ of $T_1$. We define in the mean time the computable function $g(e)$ for $e \leq n_1$ so that its first bit is different from the one forced on leaves of $T_1$. Note that $g(e) \in \{0,1\}$ and that as $G(e) \geq 2$ we necessarily have $g(e) < G(e)$ for $e \leq n_1$. We now deal with a crucial point for the rest of the induction, corresponding to the point (d), in the proof that defeats only one functional. As $T_1$ is $G$-fat for $(n_1)$, there exists a node $\tau_1 \in T_1$ of length $m_1$ such that $T_1$ is $F \rest {m_1 + \exp(n_1)}$-full-branching above $\tau_1$ and such that $\exp(w_{F}(m_1+(\exp(n_1)))) < G(n_1)$ (using (4) of he claim). Let $c$ be the number of leaves of $T_1$. Note that $w_{F}(m_1+(\exp(n_1))$ is the number of nodes in the $F\rest {m_1+\exp(n_1)}$-full-branching tree above the empty string. As $c$ is the number of nodes in the $F\rest {m_1+\exp(n_1)}$-full-branching tree above $\tau_1$, it follows that $c \leq w_{F}(m_1+\exp(n_1))$ and then that $\exp(c) < G(n_1)$. Just as in the proof that defeats only one functional, this will allow us to continue the induction and in particular to have values smaller than $G(n_1 + t)$ for which we can continue to define $g$.

Suppose now by induction that at step $k$ we have a sequence $n_1 < \dots < n_k$ and a finite tree $T_k \subseteq T$ which is $G$-fat for $(n_1, \dots, n_k)$. Let $\sigma_0, \dots, \sigma_c$ be the leaves of $T_k$ and suppose also that $c$ is such that $\exp(c) < G(n_k)$. Let us define $n_{k+1} > n_k$ and a finite tree $T_{k+1} \subseteq T$, $G$-fat for $(n_1, \dots, n_k, n_{k+1})$, and which extends $T_k$, together with values $g(e)$ for $n_k < e \leq n_{k+1}$ such that $g(e) < G(e)$ and such that $g(e)$ is different from $\Phi(e, \rho)$ for every leaf $\rho$ of $T_{k+1}$. Using the above claim, we find $n_{k+1} > n_k$ and above each node $\sigma_a$ for $a \leq c$ we find an extension $\tau_a \succeq \sigma_a$ of length $m_a$ and a finite tree $T_{k+1}^a \subseteq T$ such that $T_{k+1}^a$ is $F \rest {m_a + \exp(n_{k+1} \cdot (k+1))}$-full-branching above $\tau_a$. Also for every $e$ with $n_{k} < e \leq n_{k+1}$, the $a$-th bit of $\Phi(e, \rho)$ is the same for every leaf $\rho$ of $T_{k+1}^a$. We can use that to define the values of $g(e)$ for $n_k < e \leq n_{k+1}$ the following way: if the $a$-th bit of $\Phi(e, \rho)$ is $0$ for every leaf $\rho$ of $T_{k+1}^a$, then the $a$-th bit of $g$ is set to $1$, and vice-versa. This is here that we need to use the induction hypothesis $\exp(c) < G(n_k)$. It implies in particular that $\exp(c) < G(e)$ for $n_k < e \leq n_{k+1}$ (as $G$ is an order function). Also at most $c$ bits of $g(e)$ are set to something, which implies $g(e) \leq \exp(c)$ and thus $g(e) < G(e)$.

Let now $T_{k+1} = \bigcup_{a < c} T_{k+1}^a$. It is clear that $T_{k+1}$ is $G$-fat for $(n_0, \dots, n_{k+1})$. Let $d$ be the number of leaves of $T_{k+1}$. All we need to show now to continue the induction is that $\exp(d) < G(n_{k+1})$. To see this, let $b \leq c$ be such that $m_b \geq m_a$ for $a \leq c$. We now have by (4) of the claim that $\exp(w_{F}(m_b+\exp(n_{k+1} \cdot (k+1)))) < G(n_{k+1})$. Also $w_{F}(m_b+\exp(n_{k+1} \cdot (k+1)))$ is the number of nodes in the $F \rest {m_b + \exp(n_{k+1} \cdot (k+1))}$-full-branching tree above the empty string. And by the choice of $m_b$, for every $a \leq c$ we have that the tree $T_{k+1}^a$ is included in the $F \rest {m_b + (\exp(n_{k+1}\cdot (k+1))}$-full-branching tree above the empty string. It follows that for $d$ the number of leaves in $T_{k+1}$, we must have $d \leq w_{F}(m_b+\exp(n_{k+1}\cdot (k+1)))$ and thus that we must have $\exp(d) < G(n_{k+1})$.

The tree $T'$ is then defined to be $\bigcup_k T_k$. It is clear that by construction, the tree $T'$ is computable with no dead ends, infinitely often $G$-fat, and that for every path $X \in [T']$, we have $\Phi(X, n) \neq g(n)$ for every $e$.\\

Let us now give the proof of the claim. Figure~\ref{fig:2} illustrates a part of this proof. By hypothesis  $T$ is infinitely often $G$-fat. In particular, there exists $n_k > n_{k-1}$ such that above every $\sigma_a$, we have $m_a' \in \omega$ and an extension $\tau_a' \succ \s_a$ of length $m_a'$, such that $T$ is $F \rest {m_a' + \exp(n_k \cdot (k+1))}$-full-branching above $\tau_a'$ with 
$$\exp(w_{F}(m_a'+\exp(n_k \cdot (k+1))) < G(n_k)$$
Note that here, we truly mean $\exp(n_k \cdot (k+1))$ and not $\exp(n_k \cdot k)$. Given $\tau_a'$ of length $m_a'$, for each node of $T$ of length $m_a' + \exp(n_k \cdot (k+1))$ extending $\tau_a'$, we find an extension $\rho \in T$ of this node such that the values $\Phi(\rho, t)$ are defined for every $t$ with $n_{k-1} < t \leq n_k$. We define the tree ${T^{a}}'$ to be all these nodes $\rho$ and their prefixes. We now inductively apply \Cref{lemma:complicated} to the tree ${T^{a}}'$, so that for every $t$ with $n_{k-1} < t \leq n_k$, the $a$-th bit of $\Phi(t, \rho)$ is the same on every leaf $\rho$ of ${T^{a}}'$. Let us explain the first step. Given ${T^{a}}'$, we partition its leaves into these for which the $a$-th bit of $\Phi(n_{k-1}+1, \rho)$ is $0$, and these for which the $a$-th bit of $\Phi(n_{k-1}+1, \rho)$ is $1$. We then thin the tree ${T^{a}}'$ as described in \Cref{lemma:complicated}, so that the height of the full-branching part of ${T^{a}}'$ is halved, and the $a$-th bit of $\Phi(n_{k-1}+1, \rho)$ is the same for all the remaining leaves. We then inductively apply \Cref{lemma:complicated} on the successive resulting trees, to deal with the $a$-th bit of all the values $\Phi(t, \rho)$ for $n_{k-1} < t \leq n_k$. Let $T^a$ be the tree resulting of the successive applications from \Cref{lemma:complicated}. 

It is clear by design that (2) and (3) of the claim are satisfied. Let us verify  (1). Each each time we applied \Cref{lemma:complicated}, it halved the height of the full-branching part of ${T^{a}}'$. We applied \Cref{lemma:complicated} at most $n_k$ times. Also ${T^{a}}'$ is $F \rest {m_a' + \exp(n_k \cdot (k+1))}$-full-branching above $\tau_a'$. This means in particular that its full-branching part has height $\exp(n_k \cdot (k+1))$. It follows that the full-branching part of $T^a$ has height at least $\exp(n_k \cdot (k+1)) \exp(-n_k) = \exp(n_k \cdot k)$. Thus we have that $T^a$ is $F \rest {m_a + \exp(n_k \cdot k)}$-full branching above some node $\tau_a \succeq \tau_a'$ of length $m_a$. Thus also (1) is verified.

It remains to verify (4). Recall that $n_k$ and (for every $a$) the strings $\tau_a'$ of length $m_a'$ were picked such that
$$\exp(w_{F}(m_a'+(\exp(n_k \cdot (k+1)))) < G(n_k)$$   
In order to verify (4), we now want to show for every $a$ that:
$$\exp(w_{F}(m_a+(\exp(n_k \cdot k))) < G(n_k)$$
It suffices to show for every $a$ that $m_a + \exp(n_k \cdot k) \leq m_a' + \exp(n_k \cdot (k+1))$. Recall that $m_a$ is the length of the string $\tau_a$ extending $\tau_a'$, resulting of the successive applications of \Cref{lemma:complicated} to the full-branching part of ${T^{a}}'$. In particular we have $\tau_a \in {T^{a}}'$. Also the quantities $\exp(n_k \cdot (k+1))$ and $\exp(n_k \cdot k)$ are respectively the height of the full-branching part of ${T^{a}}'$ and the height of the full-branching part of $T^a \subseteq {T^{a}}'$. It easily follows that $m_a + \exp(n_k \cdot k) \leq m_a' + \exp(n_k \cdot (k+1))$.

\end{proof}

\begin{figure}
\begin{center}
\hspace*{0in}
\resizebox{\textwidth}{!}{\begin{tikzpicture}[every fit/.style={ellipse,draw,inner sep=-2pt},every text node part/.style={align=center}]
\tikzset{every tree node/.style={align=center}}
\tikzstyle{point}=[]
\tikzstyle{end}=[]
\tikzstyle{tiret}=[font=\bf, ultra thick]

\coordinate[] (start) {};
\coordinate[above=2cm of start] (end) {};
\coordinate[above left=4cm and 4cm of end] (t1) {};
\coordinate[right=1cm of t1] (t2) {};
\coordinate[above right=4cm and 4cm of end] (tn) {};

\coordinate[right=1cm of tn] (f1) {};
\coordinate[below=4cm of f1] (f2) {};

\coordinate[above=4cm of t1] (e1) {};
\coordinate[above=4cm of t2] (e2) {};
\coordinate[above=4cm of tn] (en) {};

\coordinate[above=.5cm of tn] (m1) {};
\coordinate[above right=2cm and .5cm of m1] (m3) {};

\coordinate[right=2cm of m1] (s1) {};
\node[right=7pt of s1] {$\Phi(\rho, n_{k-1}+1)$};
\coordinate[above=1cm of s1] (s2) {};
\node[right=7pt of s2] {$\Phi(\rho, n_{k-1}+2)$};
\coordinate[above=1cm of s2] (s3) {};
\node[right=7pt of s3] {\ \ \ $\dots$};
\coordinate[above=1cm of s3] (s4) {};
\node[right=7pt of s4] {$\Phi(\rho, n_{k})$};

\coordinate[above right=4.25cm and 2cm of s1] (sns) {};

\coordinate[left= 6cm of start] (left1) {};
\coordinate[above= 10cm of left1] (left2) {};

\coordinate[above =2cm of left1] (map);
\node[left=7pt of map] {$m_a'$};

\coordinate[above=6cm of left1] (map2);
\node[left=7pt of map2] {$m_a'+$\\$2^{n_k.(k+1)}$};

\coordinate[above=3.5cm of left1] (ma);
\node[left=7pt of ma] {$m_a$};

\coordinate[above =5cm of left1] (ma2);
\node[left=7pt of ma2] {$m_a+2^{n_k.k}$};

\node[above left=1cm and 7pt of start] {$\tau_a'$};

\coordinate[above right=1.5cm and .5 of end] (ends) {};
\coordinate[above left=1.5cm and 1.5cm of ends] (t1s) {};
\coordinate[right=1cm of t1s] (t2s) {};
\coordinate[above right=1.5cm and 1.5cm of ends] (tns) {};

\coordinate[above=5cm of t1s] (e1s) {};
\coordinate[above=5cm of t2s] (e2s) {};
\coordinate[above=5cm of tns] (ens) {};

\node[above left=.5cm and 1pt of end] {$\tau_a$};

\draw [-,decorate,decoration={snake,amplitude=.6mm,segment length=6mm,post length=1mm}] (start) -- (end);
\draw [-] (end) -- (t1);
\draw [-] (end) -- (tn);
\draw [-] (t1) -- (tn);

\draw [-,decorate,decoration={snake,amplitude=.6mm,segment length=6mm,post length=1mm}] (end) -- (ends);
\draw [-] (ends) -- (t1s);
\draw [-] (ends) -- (tns);
\draw [-] (t1s) -- (tns);

\draw [-] (t1) -- (e1);
\draw [-] (t2) -- (e2);
\draw [-] (tn) -- (en);

\draw [-,decorate,decoration={snake,amplitude=.3mm,segment length=6mm,post length=1mm}] (t1s) -- (e1s);
\draw [-,decorate,decoration={snake,amplitude=.3mm,segment length=6mm,post length=1mm}] (t2s) -- (e2s);
\draw [-,decorate,decoration={snake,amplitude=.3mm,segment length=6mm,post length=1mm}] (tns) -- (ens);


\draw[-, dotted, thick, shorten <=-5pt,shorten >=-5pt] (e1s) to[bend left] node[above] (boo) {Leaves $\rho$ of $T_a$} (ens);

\draw[-, dotted, thick, shorten <=5pt] (tn) to[bend right] node[above] {} (en);
\draw[->,shorten <=5pt,shorten >=5pt] (m3) to node[above]{$\Phi(\rho, t)$} (s3);

\draw[->,shorten <=5pt,shorten >=5pt] (boo) to node[above]{For $n_{k-1} < t \leq n_k$\\same $a$-th bit of $\Phi(\rho, t)$ for all $\rho$} (sns);

\draw[-, dashed] (left1) to (left2);
\draw[-, dashed,shorten <=5pt,shorten >=5pt] (map) to (end);
\draw[-, dashed,shorten <=5pt,shorten >=5pt] (map2) to (t1);
\draw[-, dashed,shorten <=5pt,shorten >=5pt] (ma) to (ends);
\draw[-, dashed,shorten <=5pt,shorten >=5pt] (ma2) to (t1s);

\draw[<->, dashed] (f1) to node[right]{full-branching\\ part} (f2);

\end{tikzpicture}}
\end{center}
\caption{Construction of $T^{a}$ in the claim of \Cref{lemma:all}. $T_a$ is the result of thinning the full-branching part above $\tau_a'$ into a full-branching part above a extension $\tau_a \succeq \tau_a'$}.
\label{fig:2}
\end{figure}
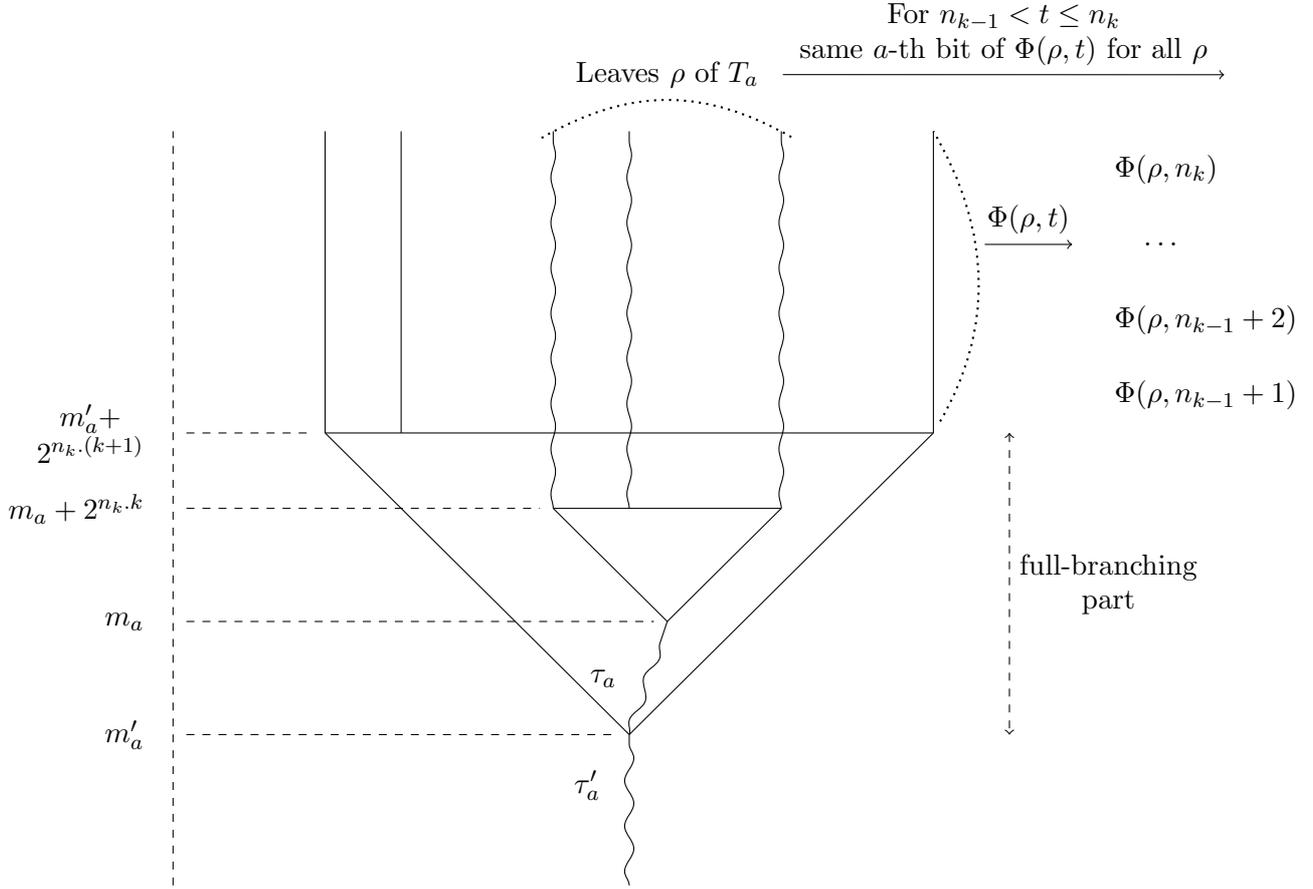
\section{Some open questions}
\Cref{thm:mainDelta} implies that there are no $\Gamma$-values strictly between $0$ and $1/2$. However, if  $\Gamma(X) < 1/2$, the lemma does not provide a single set $Y \leq_T X$ such that $\Gamma(Y) = 0$. 
\begin{question} \label[question]{qu:gamma0}
Let $X$ be a set such that  $\Gamma(X) = 0$. Is there   a set $Y \leq_T X$ such that $\gamma(Y) = 0$? 
Equivalently, is $\+ D(0)  \equiv_W \+ D(1/4)$?
\end{question}
This question is actually connected to other questions regarding the hierarchy of mass problem $\mathrm{IOE}(h)$ in the Muchnik degrees. We showed in \Cref{th-ioe} that this hierarchy is proper, but given $f$, the function  $g > f$ we provide such that $\mathrm{IOE}(f) <_W \mathrm{IOE}(g)$ grows  rather fast compared to $f$. We do not know for instance if given any $f$ we have $\mathrm{IOE}(f) <_W \mathrm{IOE}(\lambda n .  f(n^2))$. So we ask here the following question:
\begin{question}
Does there existe a computable function $f$ with

  $\forall a \in \omega\ \forall^\infty n\ f(n) > 2^{(a^n)}$   such that
  $\mathrm{IOE}(2^{(2^n)}) \equiv_W \mathrm{IOE}(f)$ ? 
\end{question}
A positive answer to this question would also provide a positive answer to \Cref{qu:gamma0}.  For, by Remark \ref{rem:D0}, $ \mathrm{IOE}(f) \ge_S \+ D(0)$, and we have $\+ D(0) \ge_S \+ D(1/4) \equiv_W \mathrm{IOE}(2^{(2^n)})$. 

There is  also a question regarding the sets $X$ such that $\Gamma(X) = 1/2$.
\begin{question}
Let $X$ be a set with  $\Gamma(X) = 1/2$. Let $Y \leq_T X$. Is there  a computable set $A$ such that $\uul \rho (Y \lra A) = 1/2$ ?
\end{question}
%
Again, the proof of \Cref{thm:mainDelta} does not help answering this question. All  we  have is an affirmative answer to the question for all the known examples of sets $X$ with a $\Gamma$-value of $1/2$.

Finally we ask for an analog of Theorem~\ref{th-ioe} for cardinal characteristics. 
\begin{question} Given an order function $F$, is there a faster growing  order function $G$ such that $\frd(\neq^*, G) < \frd(\neq^* , F) $ is consistent with ZFC? \end{question}

\def\cprime{$'$} \def\cprime{$'$}

%
%
%
%
%

\end{document}